\DeclareMathAlphabet\mathbfcal{OMS}{cmsy}{b}{n}
\newcommand\funcRestr[2]{{
		\left.\kern-\nulldelimiterspace 
		#1 
		\vphantom{\big|} 
		\right|_{#2} 
}}
\theoremstyle{plain}
\newtheorem{theorem}{Theorem}[section]
\newtheorem{lemma}[theorem]{Lemma}
\newtheorem{definition}[theorem]{Definition}
\newtheorem{corollary}[theorem]{Corollary}
\newtheorem{assumption}[theorem]{Assumption}
\newtheorem{remark}[theorem]{Remark}
\newcommand{\measPPPSection}{{\lambda}}
\newcommand{\mollifier}{\xi^{\mathrm{space}}}
\newcommand{\mollifierTime}{\xi^{\mathrm{time}}}
\newcommand{\transportPlan}{\gamma}
\newcommand{\notinclude}[1]{}
\newcommand{\weakstarto}{{\stackrel*\rightharpoonup}}
\newcommand{\dist}{{\mathrm{dist}}}
\newcommand{\mdiv}{{\rm div}}
\newcommand{\R}{\mathbb{R}}
\newcommand{\N}{\mathbb{N}}
\newcommand{\Z}{\mathbb{Z}}
\newcommand{\C}{\mathbb{C}}
\renewcommand{\emptyset}{\varnothing}
\renewcommand{\d}{{\mathrm d}}
\newcommand{\norm}[1]{\left\lVert#1\right\rVert}
\newcommand{\normS}[1]{\lVert#1\rVert}
\newcommand{\absNorm}[1]{\left\lvert#1\right\rvert}
\newcommand{\absNormS}[1]{\lvert#1\rvert}
\newcommand{\Log}[1]{\log\left(#1\right)}
\newcommand{\DetectorPair}[3][]{\Gamma_{#1}^{#2}\times\Gamma_{#1}^{#3}}
\newcommand{\HausdorffMeas}[1]{\HausdorffMeasSq\left( #1 \right)}
\newcommand{\HausdorffMeasSq}{\hd^2\otimes\hd^2}
\newcommand{\dotProd}[2]{\langle #1,#2 \rangle}
\newcommand{\restr}{\with}
\newcommand{\convN}{\xrightarrow{n\rightarrow\infty}}
\newcommand{\convStar}{\xrightharpoonup{\ast}}
\newcommand{\Prob}[1]{\mathbb{P}\left( #1 \right)}
\newcommand{\Expect}[1]{\mathbb{E}\left[ #1 \right]}
\newcommand{\vol}{{\mathrm{vol}}}
\newcommand{\volS}{\mathrm{vol}_{S^2}}
\newcommand{\volG}{\volS}
\newcommand{\restrictTo}[1]{|_{#1}}
\newcommand{\dint}{\,\d}
\DeclarePairedDelimiter\ceil{\lceil}{\rceil}
\newcommand{\BBEnergy}{S}
\newcommand{\density}{\rho}
\newcommand{\densityT}[1]{\density_{#1}}
\newcommand{\densityN}[1]{\density_{#1}}
\newcommand{\densityTN}[2]{\density_{#2, #1}}
\newcommand{\momentum}{\eta}
\newcommand{\momentumN}[1]{\eta_{#1}}
\newcommand{\momentumTN}[2]{\eta_{#2, #1}}
\newcommand{\forwardOp}{A}
\newcommand{\UF}{u}
\newcommand{\forwardOpP}{\forwardP}
\newcommand{\forwardOpD}{A^d}
\newcommand{\forwardOpPN}{A^{\UF_n}}
\newcommand{\forward}[1]{A#1}
\newcommand{\forwardP}[1]{A^{\UF}#1}
\newcommand{\forwardD}[1]{A^{d}#1}
\newcommand{\forwardS}[1]{A^{s}#1}
\newcommand{\forwardPN}[2]{A^{\UF_{#2}}#1}
\newcommand{\forwardBinnedOp}{B}
\newcommand{\forwardBinnedP}[1]{B^\UF#1}
\newcommand{\forwardBinnedPN}[2]{B^{\UF_#2}_{#2}#1}
\newcommand{\forwardBinnedLimit}[1]{B^\UF_{\infty}#1}
\newcommand{\halflife}{{T_{1/2}}}
\newcommand{\dom}{D}
\newcommand{\domDelta}{\mathcal{D}}
\newcommand{\domDeltaHalf}{D_{\delta/2}}
\newcommand{\boundDomDeltaSq}{{\partial\domDelta\times\partial\domDelta}}
\newcommand{\spacetime}{{[0,T]\times\dom}}
\newcommand{\spacetimedet}{{[0,T]\times\boundDomDeltaSq}}
\newcommand{\intensity}{q}
\newcommand{\timeInterval}{[0,T]}
\newcommand{\meas}{{\mathcal M}}
\newcommand{\measp}{{\mathcal M_+}}
\newcommand{\pushforward}[2]{{{#1}_{\#}#2}}
\newcommand{\hd}{\mathcal{H}}
\newcommand{\hdhd}{\HausdorffMeasSq}
\newcommand{\probSpace}{\Omega}
\newcommand{\sigmaProb}{\mathcal{F}}
\newcommand{\Poi}[1]{{\mathbfcal{P}(#1)}}
\newcommand{\att}{{\mathrm a}}
\newcommand{\sct}{{\mathrm s}}
\newcommand{\dt}{{\mathrm d}}
\newcommand{\RNderivative}[2]{{\frac{\d{#1}}{\d{#2}}}}
\begin{document}
\newpage
\title{Convergence of Poisson point processes and of optimal transport regularization with application in variational analysis of PET reconstruction}
\author{Marco Mauritz \and Benedikt Wirth}
\maketitle

\begin{abstract}
  Poisson distributed measurements in inverse problems often stem from Poisson point processes that are observed through discretized or finite-resolution detectors,
  one of the most prominent examples being positron emission tomography (PET).
  These inverse problems are typically reconstructed via Bayesian methods.
  A natural question then is whether and how the reconstruction converges as the signal-to-noise ratio tends to infinity
  and how this convergence interacts with other parameters such as the detector size.
  In this article we carry out a corresponding variational analysis for the exemplary Bayesian reconstruction functional from \cite{PET_Base, Schmitzer_DynamicCellImaging},
  which considers dynamic PET imaging (i.e.\ the object to be reconstructed changes over time) and uses an optimal transport regularization.
\end{abstract}

\section{Introduction}
Inverse problems with Poisson distributed measurements collected from finitely many detectors are often reconstructed using Bayesian methods.
One of the most prominent examples is positron emission tomography (PET) imaging.

In PET one tries to reconstruct a radionuclide distribution within an object (e.g.\ a patient or a lab animal).
A radioactive decay produces two photons to be emitted in opposite directions from (more or less) the decay position.
Detectors around the object recognize the two simultaneous photons (a so-called coincidence) and thus the line segment along which the decay happened.
The Poisson distribution of these measurements derives from the Poisson distribution of radioactive decay.

In static PET imaging, however, the Poisson noise is often negligible in practice: By increasing the imaging time interval it can readily be reduced.
This is not the case in dynamic PET imaging, where the radionuclide distribution changes over time
and which is therefore much more interesting and complicated.
There are several ways to overcome the difficulties of dynamic inverse problems,
and recently regularization of dynamically changing measures via optimal transport became more prevalent \cite{Schmitzer_DynamicCellImaging, Bredies_Fanzon_dynamicIP, Bredies_Carioni_CondintionalGradientMethodOTRegularization}.

A standard exercise in inverse problems is to prove convergence of the reconstruction in the limit of vanishing noise or rather of infinite signal-to-noise ratio (SNR).
If at the same time the measurement resolution increases and other regularizing parameters (like the regularization weight) decrease, one can hope to converge to the ground truth.
A priori, however, it is not obvious how the resolution and other parameters need to be coupled to the SNR
-- typically in inverse problems the regularization is not allowed to decrease too fast in comparison to the noise.
In the Poisson noise setting this is challenging since the noise is not independent of the ground truth.

In this article we prove convergence of the reconstruction to the ground truth for an exemplary model of dynamic PET imaging \cite{PET_Base, Schmitzer_DynamicCellImaging},
which is particularly interesting due to several involved factors:
Poisson noise, temporal dependence, measure-valued reconstructions, and optimal transport regularization.
Essentially, we prove $\Gamma$-convergence of the reconstruction functional.
Though this convergence or stability result is the first step in analysing such an inverse problem, it is already quite nontrivial.
The next step would concern convergence rates under source conditions,
where in the case of measure-valued reconstructions already the metric to be employed is unclear (potential metrics could be borrowed from the literature on superresolution \cite{Candes-Fernandez-Granda,Peyre-Duval-Denoyelle,Holler_Schlueter_Wirth}).

In the considered dynamic PET reconstruction method from \cite{PET_Base, Schmitzer_DynamicCellImaging},
the ill-posed inverse problem is regularized by means of optimal transport.
This approach guarantees temporal consistency between different measurement times and favours temporal evolutions with low kinetic energy.
In more detail, a maximum a posteriori (MAP) estimate leads to minimizing the functional 
\begin{equation*}
J^{E_q}(\density,\momentum) = \norm{\forwardOp\density}-\frac{1}{q}\int_{[0,T]\times\boundDomDeltaSq}\log\left( \forwardBinnedP{\density} \right)\dint E_q+\beta\BBEnergy(\density,\momentum).
\end{equation*}
Here, $\norm{\cdot}$ is the total variation norm on the space of measures, i.e.\ the total mass of a nonnegative
measure, $\domDelta\subset\R^3$ is the PET scanner interior on whose boundary $\partial\domDelta$ the detectors are located, and $E_q = \sum_{k=1}^{K_q}\delta_{(t_k, a_k,b_k)}$ is the PET measurement of all coincidences in so-called \textit{listmode format}, which is represented as a linear combination of Dirac measures at time points $t_k$ and detector pairs $(a_k,b_k)\subset\partial\domDelta\times\partial\domDelta$.
The linear forward operator that maps a radioactive radionuclide distribution $\density$ to an expected distribution of coincidences on $\R\times\boundDomDeltaSq$ is denoted $A$, and $B^u$ is a modification that accounts for the discrete nature of the measurements (it depends on the detectors $\Gamma^j\subset \partial\domDelta$ and the temporal resolution) and helps to reduce a certain bias of the MAP estimate via the parameter $\UF$.
The auxiliary variable $\momentum$ is an $\R^3$-valued Radon measure representing the physical momentum associated with the motion of the mass $\density$, thus both variables must satisfy the \textit{continuity equation}
\begin{align}\label{eqn:CE_introduction}
\partial_t\density+\mdiv\,\momentum=0.
\end{align}
Finally, $q>0$ is a scaling factor proportional to the expected number of events (meaning that on average we have $\normS{E_q}\simeq q$), the parameter $\beta>0$ is a regularization weight, and $\BBEnergy$ is the so-called Benamou--Brenier functional (a dynamic formulation of the Wasserstein-2 optimal transport cost),
\begin{equation}\label{eqn:BenamouBrenier}
\BBEnergy(\density,\momentum)=\begin{cases}
\int_0^T\int_\domDelta\left( \frac{\dt\momentum_t}{\dt\densityT{t}} \right)^2\dt\densityT{t}\ \dt t \quad&\text{if }\density\ge 0\text{ and \eqref{eqn:CE_introduction} holds},\\
\infty&\text{else}.
\end{cases}
\end{equation}
In this article we use $\Gamma$-convergence to investigate the limit behaviour of the PET model for a SNR tending to infinity (corresponding to $q\to\infty$ and simultaneous weak-* convergence of $E_q/q$). In general, a higher SNR should lead to better reconstructions which is indeed the case for the above model as will be seen from the $\Gamma$-limit.

For an increasing SNR it makes sense to also vary other system parameters (e.g.\ to simultaneously increase the detector resolution). In our variational analysis we also cover the situation in which the detector sizes and smallest resolved time difference approach zero, the unbiasing factor $\UF$ may converge to any positive and the regularization weight $\beta$ to any nonnegative number.
In the end this allows to prove stability of the reconstruction and reconstruction of the ground truth in the vanishing noise limit.

Since PET measurements result from radioactive decay, they are of stochastic nature and follow a Poisson distribution. This stochastic behaviour is incorporated into our analysis, and we use Poisson point processes (PPP) to describe radioactive decay. Likewise, the PET measurements are described by a PPP. The growing SNR is modelled by an increasing intensity of the PPP (corresponding e.g.\ to a decreasing halflife of the considered radionuclide). Therefore, we need to understand the convergence of Poisson point processes to be able to compute the $\Gamma$-limit. More precisely: the measurements are realizations of a PPP $\bm{E}_q$ with intensity measure $q\forwardOp\density^\dagger$ for a finite ground truth radionuclide distribution $\density^\dagger$. With $q\to\infty$ the average number of points being sampled from the measurement process $\bm E_q$ also tends to infinity and we study the convergence properties of $\frac{1}{q}\bm E_q$.

\subsection{Contributions of the article}
Our main contributions and the outline of this article are as follows:
\begin{itemize}
\item In \cref{SectionPPP} we prove, based on \cite{PPPconcentrationInequality}, convergence results of PPP that are important for the $\Gamma$-convergence, but also interesting in their own right. For a finite measure $\lambda$ on some measurable space $X$ and a monotone sequence $q_n\to\infty$ we consider the PPP $\bm E_{q_n}$ with intensity measure $q_n\lambda$. For a sequence of partitions $(C_n^k)_{k=1,\ldots,N_n}$ of $X$ we show
\begin{align*}
    \frac{1}{r_n}\sum_{k=1}^{N_n}\absNorm{\frac{1}{q_n}\bm E_{q_n}(C_n^k)-\lambda(C_n^k)}\xrightarrow[n\rightarrow\infty]{\text{a.s.}}0
\end{align*}
under suitable conditions on $q_n$, $N_n$, and $r_n$.
We distinguish between two different ways of defining the sequence $\bm E_{q_n}$, modelling different experimental settings:
an arbitrary sequence, corresponding to potentially independent measurements (of the same fixed ground truth),
and a coupled sequence, in which previous data is augmented by new measurements.
The latter results in slightly less restrictive conditions for convergence.

\item In \cref{sec:BenamouBrenier} we approximate measures $\density=\d t\otimes\densityT{t}$ on $[0,T]\times\dom$ (with $\densityT{t}(\dom)=\text{const.}$ for a.e.\ $t\in[0,T]$ and $\dom\subset\domDelta$) by more regular ones, $\densityN{n}=\d t\otimes\densityTN{t}{n}$, such that the curve $t\mapsto\densityTN{t}{n}$ is Hölder-$\frac{1}{2}$ continuous in the Wasserstein-2 space. Based on \cite[Thm.\,5.14]{OTAppliedMath} we can also ensure that $\BBEnergy(\densityN{n},\momentumN{n})\le\frac{1}{\delta_n}$ for any sequence $\delta_n\to 0$, where the sequence of $\R^3$-valued measures $\momentumN{n}$ is constructed such that $(\densityN{n}, \momentumN{n})$ satisfies the continuity equation \eqref{eqn:CE_introduction}. Additionally, we give the approximation result $\mathbb W_2(\densityTN{t_n}{n},\densityT{t})\to0$ for the Wasserstein-2 distance $\mathbb W_2$ along a subsequence for almost every $t$ and $t_n\to t$.
This approximation result will be needed in our $\Gamma$-convergence analysis, but is more generally applicable whenever analysing optimal transport-based regularization.

\item In \cref{sec:ForwardOperator} we introduce the PET forward model based on \cite{Schmitzer_DynamicCellImaging,PET_Base} and slightly adapt and generalize it to our setting.

\item \Cref{SectionGammaConvergence} shows stochastic $\Gamma$-convergence of the discrete PET reconstruction model to a (continuous) limit model for an increasing signal intensity $q_n\to\infty$ while allowing for resolvable time differences, detector sizes, and regularization parameter to go to zero. The $\Gamma$-limit (there remains no stochasticity of the PET measurements, and the reconstruction is deterministic) is basically a continuous Kullback--Leibler divergence, i.e.\ our convergence result motivates using a continuous Kullback--Leibler divergence as data term for high resolution data.

Additionally, a classical convergence of minimizers result is shown. Specifically, we show that if all sources of noise (discretization and measurement noise) vanish in the limit, then any sequence of minimizers converges to the ground truth which is the measure that generates the PET measurements.

For the convergence result to hold, either the regularization parameter $\beta$ or the bin size of time-binned measurements must decrease more slowly than the radioactivity $q$ increases.
Such relation is expected for inverse problems, the interpretation here is as follows:
The measured coincidences all happen at different time points and only become related to each other via time binning or via the temporal Benamou--Brenier regularization.
If this relation becomes too weak (e.g.\ due to too low regularization weight) the radioactive material may have moved arbitrarily in between the coincidences
so that it can no longer be localized (since localization requires multiple coincidences).
\end{itemize}

\subsection{Preliminaries and notation}
We start with introducing some notation, part of which we actually already used above.
The Banach space of Radon measures on a compact domain $X$ will be denoted $\meas(X)$ with norm $\norm{\cdot}$, the subset of nonnegative measures by $\measp(X)$. On $\meas(X)$ we have the weak-* convergence $\mu_n\convStar\mu$.
For two measures $\mu,\alpha\in\meas(X)$ with $\mu$ absolutely continuous with respect to $\alpha$, the Radon--Nikodym derivative of $\mu$ with respect to $\alpha$ is denoted $\RNderivative{\mu}{\alpha}$.
The restriction of a measure $\mu$ to some $\mu$-measurable set $S$ is denoted $\mu\restr S$,
and the pushforward of $\mu$ under some $\mu$-measurable map $f$ is denoted $\pushforward{f}\mu$.
By $\mathcal{L}^d$ and $\hd^d$ we denote the $d$-dimensional Lebesgue and Hausdorff measure, where for $d=1$ we may drop the exponent,
and $\delta_a$ denotes the Dirac measure at some point $a$.
Sometimes we will for simplicity also refer to the Lebesgue measure in time by $\d t$.
Furthermore, we will indicate random variables by boldface letters such as $\bm E$
while their realizations have normal font, thus $E=\bm E(\omega)$ for $\omega$ a random element of the standard probability space $(\Omega,\mathcal{F},P)$.
Finally, given a measure $\lambda$, by $\Poi{\lambda}$ we denote the Poisson point process with intensity $\lambda$.
We will only consider $\sigma$-finite and diffuse intensities on (metric) Borel spaces so that the corresponding Poisson point processes are proper and simple and thus can be interpreted as random sets of points
(see \cite{bookPPP_last_penrose, bookPPP_kingman} for an introduction to Poisson point processes).

$\mathrm{L}^p$, $p\ge1$, denotes the standard Lebesgue $\mathrm{L}^p$-space, $f_n\xrightharpoonup{\mathrm{L}^p} f$ denotes weak convergence in $\mathrm{L}^p$, and $C$ and $C^1$ ($C_c^1$) denote continuous and continuously differentiable (and compactly supported) functions.

We will further employ the notation $a\lesssim b$ to indicate the existence of an independent constant $c>0$ such that $a\leq cb$
(analogously, $b\gtrsim a$ stands for $a\lesssim b$ and $a\simeq b$ for $a\lesssim b$ and $b\lesssim a$). We use $C$ for a constant that may change its value in consecutive estimates. Moreover, we use the little-o notation $f_n\in o(g_n)$ meaning that $\lim_{n}\frac{f_n}{g_n}=0$. 

The spatial setting of the PET reconstruction model is as follows:
The sought radionuclide distribution is confined to $\dom\subset\R^3$, the closure of a bounded, open and convex set with $0\in\mathrm{int}(\dom)$.
The PET scanning tube $\domDelta\supset\dom$ is compact and convex such that $\text{dist}(\dom, \partial\domDelta)\ge\delta>0$ (the detectors are located in $\partial\domDelta$).
For the reader's convenience below we provide a reference list of further model-specific symbols and quantities frequently used throughout the article.

\setlength\extrarowheight{3pt}
\begin{longtable}{@{}lp{12.5cm}}
	$A^\att, \forwardOp^\sct, \forwardOp^\dt$ & Forward operators describing attenuation, scattering and normal detection. They are either defined on time slices, i.e.\ on $\meas(\dom)$, or on $\meas(\spacetime)$ via $A^{a/c/d}\density = \d t\otimes A^{a/c/d}\densityT{t}$.\\
	
	$A$, $\forwardP$ & Total (weighted) forward operator $A = p^\sct \forwardOp^\sct \!+\! p^\dt \forwardOp^\dt$, $A^u = \UF p^\sct \forwardOp^\sct \!+\! p^\dt \forwardOp^\dt$.  \\
	
	$\forwardBinnedOp^\UF$, $\forwardBinnedOp^\UF_n$ & (Weighted) discrete forward operator where the subscript $n$ denotes a dependence on system quantities such as detector size, see \eqref{eq:DefDiscreteForwardOperator}.\\

	$\dom\subset\R^3$ & Compact and convex set where the radioactive material stays. \\
	
	$\domDelta\subset\R^3, \delta$ & Compact and convex set such that $\dom\subset\domDelta$ and $\mathrm{dist}(\dom,\partial\domDelta)\ge\delta$ for some $\delta>0$. The detectors are located at the boundary $\partial\domDelta$.\\

	$\domDeltaHalf\subset\R^3$ & It is $\domDeltaHalf=D+B_{\delta/2}(0)$. Tracer densities are supported on $\domDeltaHalf$ after smoothing with positron range kernel.\\
	$\bm E_q$, $E_q$, $\norm{E_q}$ & Measurement $E_q$, realization of a Poisson point process $\bm E_q$ with intensity measure $\frac{1}{q}\forwardOp\density^\dagger$. To be interpreted as either a set or equivalently as a discrete empirical measure. $\norm{E_q}$ denotes the number of elements in the set.\\
	
	$\mathbb{E}[\bm N]$ & Expectation of the random variable $\bm N$. \\

	$G$ & Smooth, compactly supported convolution kernel $G\colon B_{\delta/2}(0)\to[0,\infty)$ describing the probability density of the annihilation location of a positron emitted at the origin. \\
	
	$\Gamma^k\subset\partial\domDelta$, $M$ & Discrete detectors and number of detectors. For $k\neq l$ we have the detector pairs $\Gamma^k\times\Gamma^l$ which register photon pairs.\\
		
	$\hd^d$ & $d$-dimensional Hausdorff measure. \\
	
	$\momentum, \momentum^\dagger\!\!\in\!\meas(\timeInterval\!\!\times\!\!\dom)^3$         & Measures describing the material flux corresponding to the temporal variation of the mass distribution $\density,\density^\dagger$.     \\
		
	$\meas(X)$, $\meas(X)^3$ & Space of ($\R^3$-valued) Radon measures on $X$.\\

	$\measp(X)$, $\meas_c(X)$ & Nonnegative Radon measures and those with constant mass in time (see \cref{thm:closednessOfBSet}).\\

	$(\probSpace,\sigmaProb,\mathbb{P})$ & Standard probability space on which the random variables are defined. \\

	$\absNorm{\cdot}$ & Euclidean norm.\\
	
	$\mathds{1}_C$ & Characteristic function of the $C$, i.e.\ $\mathds{1}_C(x)=1$ for $x\in C$ and $0$ else. \\
	
	$P$ & X-ray transform, see \cref{sec:ForwardOperator}.\\
	
	$\Poi{\mu}$ & Poisson point process with intensity measure $\mu$. \\

	$p^\att, p^\sct, p^\dt$ & Probabilities for attenuation, scattering and normal detection. It holds $p^\att+p^\sct+p^\dt=1$.\\
		
	$\nu$& $\nu=\dt t\otimes(\hd^2\restr\partial\domDelta)\otimes(\hd^2\restr\partial\domDelta)$. The forward operator $\dt t\otimes \forwardOp\densityT{t}$ is absolutely continuous w.r.t.\ $\nu$.\\
	
	$\UF$ & Lagrange parameter $\UF>0$ that weighs the influence of the scatter part of the forward operator. It holds $\forwardOpP = \UF p^\sct \forwardOp^\sct + p^\dt \forwardOp^\dt$.  \\
	
	$R$ & $R\colon\domDeltaHalf\times S^2\to\boundDomDeltaSq,\;
	R(x,v)=\partial\domDelta\cap(x+\R v)\,,$ is the measurement function that maps a point $x$ (where an annihilation happened) and a direction $v$ onto the photon pair's detection location. It is comparable to the classical Radon transform, see \eqref{eq:RFunctionForwardOperator}.\\

	$\density, \density^\dagger\!\!\in\!\measp(\timeInterval\!\!\times\!\!\dom)$ &        Measures describing radionuclide distribution in spacetime. $\density^\dagger=\d t\otimes\density^\dagger_t\in\meas_c(\spacetime)$ represents the ground truth tracer distribution.    \\
	
	$S^2$ & Sphere in $\R^3$, i.e.\ $S^2 = \{x\in\R^3 \ | \ \absNorm{x}=1\}$. \\
	
	
	$[0,T]$ & Time interval during which the measurements are taking place.\\
	
	$\tau^i\subset[0,T]$, $N$ & Discrete time intervals, $i=1,\ldots,N$.\\
	
	



	
	
	
	
	$\mathbb{V}[\bm N]$ & Variance of the random variable $\bm N$.\\
	
	$\mathbb{W}_p(\mu, \alpha)$ & Wasserstein-$p$ distance between the nonnegative measures $\mu$ and $\alpha$ with equal mass.\\


	


	
	

\end{longtable}

\section{Poisson Point Processes}\label{SectionPPP}
In this section we provide convergence results for Poisson point processes (PPPs) with intensities tending to infinity. More precisely: We investigate the limit behaviour of $\frac{1}{q_n}\bm E_{q_n}$ for a PPP $\bm E_{q_n}=\Poi{q_n\measPPPSection}$ and $q_n\to\infty$.
Radioactive decay can well be described by PPPs: The points of the realization of a (suitably modelled) PPP can be seen as the locations in spacetime of radioactive decays. We start by defining PPPs (see \cite{bookPPP_last_penrose, bookPPP_kingman} for more details).

\begin{definition}[Poisson point process, \cite{bookPPP_last_penrose}]
Let $(X, \mathcal{X})$ be a measurable space and $\measPPPSection$ an (s-)finite measure on $X$. A \emph{Poisson point process} with intensity measure $\measPPPSection$ is a point process $\bm N$ on $X$ satisfying the following:
\begin{itemize}
\item For $B\in\mathcal{X}$ the distribution of $\bm N(B)$ is Poisson with parameter $\measPPPSection(B)$.
\item For every $m\in\N$ and any pairwise disjoint sets $B_1,\ldots,B_m\in\mathcal{X}$ the random variables $\bm N(B_1),\ldots,\bm N(B_m)$ are independent.
\end{itemize}
\end{definition}


We now investigate the limit process. We assume that the intensity measure $\measPPPSection$  is finite.
For the processes $\bm E_{q_n}$ we consider two different settings that correspond to two different ways of defining the sequence of PPPs.

In the first setting we choose an arbitrary (e.g.\ independent) sequence of PPPs $\bm E_{q_n}$ with intensity measure $q_n\measPPPSection$. This corresponds to the (theoretical or thought) experiment in which for each $n$ the PET measurement is repeated (with the only difference of a higher intensity, e.g.\ realized by a shorter radionuclide halflife), discarding all previous measurements.
In case of an independent sequence of PPPs the points drawn in step $n$ according to the law of $\bm E_{q_n}$ are therefore independent of the points drawn in step $n-1$.
The increasing SNR is here achieved by higher radioactivity.

In the second setting we consider instead a strongly coupled sequence of processes: Points from the previous step are not discarded, but new points are added in such a way that $\bm E_{q_n}$ is still a PPP with intensity measure $q_n\measPPPSection$.
This corresponds to an experiment in which for each $n$ the PET measurement is repeated (with radionuclides of different or the same halflife) and all measurements so far are combined.
The increasing SNR is here achieved by combining the repeated measurements.
The random variables corresponding to this situation are defined as follows using a so called stochastic coupling (see \cite[Chp.\,3, Sec.\,2 and 3]{stochCoupling}, \cite[Sec.\,3.1]{stochCoupling2}): For each realization of the random variables we start with an auxiliary infinite point configuration and define our random variables roughly as a truncation of the infinite point list. With less truncation the intensity of the PPPs grows. In detail, for a measurable space $(X,\mathcal X)$ we introduce
\begin{align*}
\mathbb{X}=X\times[0,\infty)\quad\text{and}\quad\mathbb{X}_q=X\times[0,q].
\end{align*}
We now define $\bm Y$ to be a PPP on $\mathbb{X}$ with intensity measure $\gamma=\measPPPSection\otimes\mathcal{L}$ and let $\bm Y_q=\bm Y\restrictTo{\mathbb{X}_q}$. Then $\bm Y_q$ is a PPP on $\mathbb{X}_q$ with intensity measure $\gamma_q=(\measPPPSection\otimes\mathcal{L})\restr \mathbb{X}_q$ (note that $\gamma_q$ is finite). Finally, let $\pi^q_X\colon\mathbb{X}_q\to X$ be the projection onto $X$. Then we define our PPPs $\bm E_{q_n}$ via $\bm E_{q_n}(\omega)(C)\coloneqq\bm Y_{q_n}(\omega)((\pi_X^{q_n})^{-1}(C))$, $\omega\in\Omega$, $C\in\mathcal{X}$. By the mapping theorem \cite[Thm. 5.1]{bookPPP_last_penrose} $\bm E_{q_n}$ is a PPP on $(X, \mathcal{X})$ with intensity measure $q_n\measPPPSection=\pushforward{\pi_X^{q_n}}{\gamma_{q_n}}$ as desired.

For the convergence result we make use of the following theorem.
\begin{theorem}[Concentration inequality {\cite[Cor.\,2]{PPPconcentrationInequality}}]\label{thm:PPPEstim}
Let $\bm N$ be a Poisson point process on some measurable space $(X,\mathcal{X})$ with finite intensity measure $\measPPPSection$ without atoms, and let $\{f_i\}_{i\in \mathcal I}$ be a countable family of functions with values in $[-b,b]$. We define
\begin{align*}
	\bm Z\coloneqq\sup_{i\in \mathcal I}\absNorm{\int f_i(\d \bm N-\d \measPPPSection)} \quad\text{and}\quad\measPPPSection_0\coloneqq\sup_{i\in \mathcal I}\int f_i^2\dint \measPPPSection.
\end{align*}
Then for all $\varepsilon,x>0$ and with $\kappa(\varepsilon)=5/4+32/\varepsilon$ it holds
\begin{align*}
\Prob{\bm Z\ge(1+\varepsilon)\Expect{\bm Z}+\sqrt{12\measPPPSection_0x}+\kappa(\varepsilon)bx}\le\exp(-x).
\end{align*}
\end{theorem}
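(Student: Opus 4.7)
The plan is to derive this as a Bousquet-type supremum concentration inequality for Poisson functionals via the entropy/Herbst method. First I would reduce to the case of a finite index family by monotone convergence on $\mathcal I$, so that $\bm Z$ is a bona fide measurable function of $\bm N$ and the manipulations below are justified. The natural gradient in the Poisson setting is the add-one difference operator $D_x F(\bm N) = F(\bm N + \delta_x) - F(\bm N)$, for which Mecke's formula yields the covariance identity $\Expect{F\int g\,\d(\bm N - \measPPPSection)} = \Expect{\int g(x)\,D_x F\,\d\measPPPSection(x)}$; this is the discrete analogue of integration by parts that underpins the functional calculus to come.

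The key analytic tool I would invoke is the modified log-Sobolev inequality for Poisson measures (Wu, Ané--Ledoux): for any $t>0$ and $\phi(u)=e^u - u - 1$,
\begin{equation*}
\mathrm{Ent}(e^{tF}) \le \Expect{\int_X \phi(tD_x F)\,e^{tF}\,\d\measPPPSection(x)}.
\end{equation*}
Applied to $F=\bm Z$, a sign-case distinction on a (near-)maximizer $i^*=i^*(\bm N)$ of $\absNorm{\int f_i(\d\bm N - \d\measPPPSection)}$ gives the pointwise bound $\absNorm{D_x \bm Z}\le \absNorm{f_{i^*}(x)}$. Using the elementary estimate $\phi(u)\le \tfrac12 u^2 e^{\absNormS{u}}$ and the definition of $\measPPPSection_0$, one arrives at $\mathrm{Ent}(e^{t\bm Z}) \le h(t)\,\Expect{e^{t\bm Z}}$ with $h(t)$ depending only on $t$, $b$ and $\measPPPSection_0$. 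Herbst's argument then converts this into a differential inequality for $L(t)=\log\Expect{e^{t(\bm Z - \Expect{\bm Z})}}$; integration and a Legendre--Fenchel inversion combined with Markov's inequality produce a Bernstein-type bound with a sub-Gaussian contribution of order $\sqrt{\measPPPSection_0 x}$ and a linear contribution of order $bx$.

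The main obstacle is producing the \emph{sharp} constants in the statement, namely the $(1+\varepsilon)$ multiplier on $\Expect{\bm Z}$, the factor $\sqrt{12}$, and $\kappa(\varepsilon)=5/4+32/\varepsilon$. A naive implementation of the recipe above yields Bennett-type tails with suboptimal prefactors; the multiplicative-$\varepsilon$ trade-off in front of $\Expect{\bm Z}$ is Bousquet's contribution and requires a careful truncation in the log-Sobolev step, splitting the positive and negative parts of $D_x\bm Z$ and absorbing the part where $\phi$ cannot be controlled by a pure $u^2$ term into the linear contribution via Young's inequality with parameter $\varepsilon$. The trade-off is then optimised over the truncation level. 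I would follow the bookkeeping of \cite{PPPconcentrationInequality} for this last step rather than reoptimise the constants ab initio.
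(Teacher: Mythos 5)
The paper does not prove this statement; it is quoted verbatim from \cite{PPPconcentrationInequality} (Cor.\,2) and used as a black box. So there is no in-paper proof to compare against.

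As a sketch of how the cited result could be derived, your outline is plausible but it is not the route taken in the reference. You propose a direct entropy argument on the Poisson space: Wu/An\'e--Ledoux modified log-Sobolev for the Poisson measure, the bound $\absNorm{D_x\bm Z}\le\absNorm{f_{i^*}(x)}$ via a near-maximizer, Herbst's argument, then a Bousquet-style truncation to recover the $(1+\varepsilon)\Expect{\bm Z}$ term. That machinery is real and would give a Talagrand-type tail, but the reference instead exploits \emph{infinite divisibility}: since $\measPPPSection$ is finite and atomless, $\bm N$ equals in law a superposition of $n$ i.i.d.\ Poisson processes of intensity $\measPPPSection/n$, so $\int f_i\,(\d\bm N-\d\measPPPSection)$ is a sum of $n$ i.i.d.\ bounded centred terms, to which the existing Bousquet/Klein--Rio concentration inequality for suprema of empirical processes applies directly; passing $n\to\infty$ yields the stated constants. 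That reduction is shorter than re-running Herbst on Poisson space and, crucially, it inherits the constants $\sqrt{12}$ and $\kappa(\varepsilon)=5/4+32/\varepsilon$ for free from the empirical-process literature. Your own account explicitly punts on those constants (``I would follow the bookkeeping of the reference''), which means the proposal as written is not a self-contained proof; if you want a complete argument, either reproduce the infinite-divisibility reduction or carry the Bousquet truncation through to explicit constants.

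One further caution about your sketch: the step $\absNorm{D_x\bm Z}\le\absNorm{f_{i^*}(x)}$ only gives a one-sided bound cleanly ($D_x\bm Z\le f_{i^*}(x)$ for the supremum without absolute values); with the absolute value inside the supremum the correct move is to symmetrize the index family (replace $\{f_i\}$ by $\{f_i\}\cup\{-f_i\}$), which removes the absolute value without changing $\bm Z$ or $\measPPPSection_0$. This is cosmetic, but if left implicit it is exactly the kind of sign issue that derails the subsequent entropy estimate.
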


In our main convergence result, the following \cref{thm:stochConv}, we used ideas from \cite{PoissonData}.

\begin{theorem}[Convergence of PPP]\label{thm:stochConv}
	Consider a measurable space $(X,\mathcal{X})$ with a finite measure $\measPPPSection$ without atoms, and a sequence of finite disjoint partitions $(C_n^k)_{k,n}$, i.e.\ $\bigcup_{k=1}^{K_n}C_n^k=X$ (the number of sets $K_n$ grows monotonously, but may stay bounded), as well as monotone sequences $q_n\to\infty$ and $r_n\to r\in[0,\infty)$ with $\sqrt{K_n/q_n}\in o(r_n)$. Moreover, let $\bm E_{q_n}$ be a PPP on $(X,\mathcal{X})$ with finite intensity measure $q_n\measPPPSection$. We distinguish two settings:
	\begin{enumerate}[label=(\alph*)]
		\item $\bm E_{q_n}$ is an arbitrary sequence of PPPs with intensity $q_n\measPPPSection$, and $\sqrt{\log(n)/q_n}\in o(r_n)$. \label{item:PPP1}
		\item $\bm E_{q_n}$ is defined via stochastic coupling and $\sqrt{\log\log(q_n)/q_n}\in o(r_n)$.
		Moreover we assume that the partitions are nested, i.e.\ for every $k\in\left\{1,\ldots,K_n\right\}$ there exists $k'\in\left\{ 1,\ldots K_{n-1}\right\}$ with $C_n^k\subset C_{n-1}^{k'}$. \label{item:PPP2}
	\end{enumerate}
	If \ref{item:PPP1} or \ref{item:PPP2} holds, then $\sum_{k=1}^{K_n}\absNorm{\frac{1}{q_n}\bm E_{q_n}(C_n^k)-\measPPPSection(C_n^k)}$ almost surely tends to zero with rate $r_n$,
	\begin{align*}
	\frac{1}{r_n}\sum_{k=1}^{K_n}\absNorm{\frac{1}{q_n}\bm E_{q_n}(C_n^k)-\measPPPSection(C_n^k)}\xrightarrow[n\rightarrow\infty]{\text{a.s.}}0.
	\end{align*}
\end{theorem}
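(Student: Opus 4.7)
The idea is to rewrite $\bm Z_n := \sum_{k=1}^{K_n}\absNorm{\frac{1}{q_n}\bm E_{q_n}(C_n^k) - \measPPPSection(C_n^k)}$ as the supremum of a finite family of stochastic integrals, so that \cref{thm:PPPEstim} becomes directly applicable. For $\sigma\in\{\pm 1\}^{K_n}$ set $f_{n,\sigma}:=\frac{1}{q_n}\sum_{k=1}^{K_n}\sigma_k\mathds{1}_{C_n^k}$; then
\begin{equation*}
\bm Z_n = \sup_{\sigma\in\{\pm 1\}^{K_n}}\absNorm{\int f_{n,\sigma}\,(\d\bm E_{q_n}-\d(q_n\measPPPSection))},
\end{equation*}
with uniform bound $\|f_{n,\sigma}\|_\infty\le 1/q_n$ and variance $\sup_\sigma\int f_{n,\sigma}^2\,\d(q_n\measPPPSection)=\measPPPSection(X)/q_n$. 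The Poisson second-moment estimate $\Expect{\absNorm{\bm E_{q_n}(C)-q_n\measPPPSection(C)}}\le\sqrt{q_n\measPPPSection(C)}$ combined with Cauchy--Schwarz yields $\Expect{\bm Z_n}\le\sqrt{\measPPPSection(X)K_n/q_n}$. Plugging these quantities into \cref{thm:PPPEstim} then gives, for every $\varepsilon,x>0$, the Bernstein-type deviation bound
\begin{equation*}
\Prob{\bm Z_n\ge(1+\varepsilon)\sqrt{\measPPPSection(X)K_n/q_n}+\sqrt{12\measPPPSection(X)x/q_n}+\kappa(\varepsilon)x/q_n}\le e^{-x}.
\end{equation*}

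In setting (a), I would instantiate this with $x=x_n:=2\log n$. The three terms on the right are then of orders $\sqrt{K_n/q_n}$, $\sqrt{\log n/q_n}$ and $\log n/q_n$, each of which is $o(r_n)$ under the hypotheses (the third one using $\log n/q_n=o(r_n^2)$ together with the boundedness of $(r_n)$). Since $\sum_n e^{-2\log n}<\infty$, the Borel--Cantelli lemma yields $\bm Z_n/r_n\xrightarrow[n\to\infty]{\text{a.s.}}0$ directly, with no use of the coupling or the nested-partition structure.

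Setting (b) is more delicate: the weaker assumption $\sqrt{\log\log q_n/q_n}\in o(r_n)$ only permits $x_n$ of order at most $\log\log q_n$, and $\sum_n e^{-c\log\log q_n}$ is in general divergent, so the direct argument of setting (a) fails. The plan is to pass to a subsequence $(n_j)$ whose growth satisfies $\log j\sim\log\log q_{n_j}$, apply the concentration bound along $(n_j)$ with $x_j:=2\log j$, and deduce $\bm Z_{n_j}/r_{n_j}\to 0$ almost surely via Borel--Cantelli. To interpolate for $n\in(n_j,n_{j+1})$, the nested-partition hypothesis yields the monotonicity-in-partition estimate $\bm Z_n\le\sum_{k'=1}^{K_{n_{j+1}}}\absNorm{\frac{1}{q_n}\bm E_{q_n}(C_{n_{j+1}}^{k'})-\measPPPSection(C_{n_{j+1}}^{k'})}$, while the coupling decomposition
\begin{equation*}
\bm E_{q_n}-q_n\measPPPSection = (\bm E_{q_{n_j}}-q_{n_j}\measPPPSection) + \bigl(\bm E_{q_n}-\bm E_{q_{n_j}}-(q_n-q_{n_j})\measPPPSection\bigr)
\end{equation*}
splits the centred fluctuation into a piece already controlled along $(n_j)$ and a piece driven by the independent increment PPP $\bm Y\restrictTo{X\times(q_{n_j},q_{n_{j+1}}]}$; the latter, being a compensated Poisson martingale in its time-parameter, can be controlled uniformly in $n\in[n_j,n_{j+1}]$ by combining \cref{thm:PPPEstim} applied to the increment PPP with a Doob-type maximal inequality. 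The hard part, and the step I expect to require the most care, is the choice of $(n_j)$: it must grow fast enough that the $\log\log q_n$ hypothesis produces a summable Borel--Cantelli tail, yet the ratio $q_{n_{j+1}}/q_{n_j}$ must stay close enough to $1$ that the deterministic ``bias'' $(q_{n_{j+1}}-q_{n_j})\measPPPSection(X)/q_n$ arising from the coupling decomposition still decays as $o(r_{n_j})$.
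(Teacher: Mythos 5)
Your treatment of setting \ref{item:PPP1} is essentially the paper's argument: both pass to the family of $\pm1$ step functions, estimate $\Expect{\bm Z_n}$ by $\sqrt{K_n\normS\measPPPSection/q_n}$ via Cauchy--Schwarz, and invoke \cref{thm:PPPEstim} together with Borel--Cantelli. The paper fixes a threshold $\Delta$ and then chooses $x$, while you fix $x_n=2\log n$ and vary $\Delta$; these are equivalent presentations. Your observation that the boundedness of $r_n$ is needed to absorb the third (linear-in-$x$) term is the right point to flag.

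Setting \ref{item:PPP2} is where your proposal has genuine gaps and differs from what the paper actually does. First, the \emph{partition-level mismatch}. You correctly observe that for $n<n_{j+1}$ the nested structure gives $\bm Z_n\le\sum_{k'}|\tfrac1{q_n}\bm E_{q_n}(C_{n_{j+1}}^{k'})-\measPPPSection(C_{n_{j+1}}^{k'})|$, but then you assert that the base piece $\bm E_{q_{n_j}}-q_{n_j}\measPPPSection$, evaluated on the \emph{finer} partition $(C_{n_{j+1}}^{k'})_{k'}$, is ``already controlled along $(n_j)$.'' It is not: the Borel--Cantelli statement you propose along $(n_j)$ controls only $\sum_k|\bm E_{q_{n_j}}(C_{n_j}^k)-q_{n_j}\measPPPSection(C_{n_j}^k)|$, and refining the partition \emph{increases} this sum. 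To cover the finer partition you would need the hypothesis $\sqrt{K_{n_{j+1}}/q_{n_j}}\in o(r_{n_j})$, which does not follow from $\sqrt{K_n/q_n}\in o(r_n)$ without further argument about the relative growth of $K,q,r$ between $n_j$ and $n_{j+1}$. Second, the plan to ``combine \cref{thm:PPPEstim} applied to the increment PPP with a Doob-type maximal inequality'' is left at the level of a slogan. \cref{thm:PPPEstim} yields a tail bound on the supremum over a fixed countable function family at a \emph{single} intensity level; it is not a maximal inequality in the truncation parameter, and how one is to iterate or integrate it against Doob is precisely the nontrivial content. Third, you yourself flag that you are unsure the subsequence $(n_j)$ can be chosen; the correct resolution is that one does not need $q_{n_{j+1}}/q_{n_j}\to1$, only that the ratio is bounded (e.g.\ $q_{n_j}\asymp y^j$), but you do not establish this.

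The paper's actual argument sidesteps all three issues by a different organization. It brackets the indices by $Q_n=\{m:y^{n-1}<q_m\le y^n\}$, always works with the \emph{finest} partition $(C_{l(n)}^k)_k$ inside each bracket, and enlarges the function family to
\begin{equation*}
A_{l(n)}=\Bigl\{\textstyle\sum_k\alpha_k\mathds{1}_{\tilde C_{l(n),l}^k}\ \big|\ \alpha_k\in\{\pm1\},\ l\le l(n)\Bigr\},
\end{equation*}
where $\tilde C_{n,m}^k\subset X\times[0,q_m]$ encodes the truncation level. The supremum in \cref{thm:PPPEstim} thus ranges over all signs \emph{and} all truncation levels at once, so there is no separate base/increment decomposition and no partition mismatch. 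Doob's $L^2$ inequality enters only to bound $\Expect{\bm Z}$ (the expectation of the supremum), using that the truncation-indexed increments $\bm B_l^k$ form a martingale. A single application of \cref{thm:PPPEstim} per bracket, with $x\simeq\Delta^2 y^n r_{l(n)}^2$, then gives a summable tail and Borel--Cantelli closes the argument with the $\log\log$ condition. Your outline shares the spirit (dyadic bracketing, martingale/Doob ideas) but as written would not compile into a proof of setting \ref{item:PPP2} without adopting something close to the paper's one-shot supremum.
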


\begin{remark}[Improvement with coupling]\label{rmk:StochConvCoupling}
	Case \ref{item:PPP2} only improves on \ref{item:PPP1} if $q_{n}$ has subexponential growth, i.e.\ $\log q_n\in o(n)$, because otherwise $\sqrt{\log\log(q_n)/q_n}\gtrsim\sqrt{\log(n)/q_n}$.
\end{remark}
	
\begin{proof}
	We want to show that
	\begin{align*}
	\bm Z_n\coloneqq\frac{1}{r_nq_n}\sum_{k=1}^{K_n}\absNorm{\bm E_{q_n}(C_n^{k})-q_n\measPPPSection(C_n^{k})}
	\end{align*}
	converges to zero almost surely. We start with situation \ref{item:PPP1} and define the set of functions $A_n=\left\{ \sum_{k=1}^{K_n}\alpha_{k}\mathds{1}_{C_n^{k}} \ | \ \alpha_{k}\in\{\pm 1\} \right\}$. These functions can be used to rewrite the random variable $\bm Z_n$ in order to be able to apply \cref{thm:PPPEstim}. We find
	\begin{equation*}
	r_nq_n\bm Z_n
	\!=\!\sum_{k=1}^{K_n}\!\max_{\alpha\in\{ \pm 1 \}}\!\alpha\!\left( \bm E_{q_n}(C_n^{k})\!-\!q_n\measPPPSection(C_n^{k}) \right)
	\!=\!\max_{\phi\in A_n}\int\!\!\phi\!\left( \d \bm E_{q_n}\!-\!q_n\d \measPPPSection \right)
  \!=\! \max_{\phi\in A_n}\absNorm{\int\!\!\phi\!\left( \d \bm E_{q_n}\!-\!q_n\d \measPPPSection \right)},
	\end{equation*}
	where the last equation is true by the definition of $A_n$. Next, we want to bound $\Expect{r_nq_n\bm Z_n}$ appropriately. First, $\Expect{r_nq_n\bm Z_n}\le\sqrt{\Expect{r_n^2q_n^2\bm Z_n^2}}$ by Jensen's inequality. With Hölder's inequality for sums we get
	\begin{multline*}
	\Expect{r_n^2q_n^2\bm Z_n^2}
	\le\Expect{\left(\textstyle \sum_{k=1}^{K_n}|\bm E_{q_n}(C_n^{k})-q_n\measPPPSection(C_n^{k})| \right)^2}
	\le K_n\sum_{k=1}^{K_n}\Expect{\left( \bm E_{q_n}(C_n^{k})-q_n\measPPPSection(C_n^{k}) \right)^2}\\
	=K_n\sum_{k=1}^{K_n}\mathbb{V}[\bm E_{q_n}(C_n^{k})]
	=K_n\sum_{k=1}^{K_n}q_n\measPPPSection(C_n^{k})
	= K_nq_n\normS{\measPPPSection}.
	\end{multline*}
	Thus, we have $\Expect{r_nq_n \bm Z_n}\le\sqrt{ K_nq_n\norm{\measPPPSection}}$. Next, \cref{thm:PPPEstim} (applied to $\varepsilon=1$, $\measPPPSection_0= q_n\normS{\measPPPSection}$, and $b=1$) yields for $x\ge 1$
	\begin{align*}
	\exp\left( -x \right)&\ge\Prob{r_nq_n\bm Z_n\ge 2\Expect{r_nq_n\bm Z_n}+\sqrt{12x q_n \norm{\measPPPSection}}+\kappa(1)x}\\
	&\ge\Prob{r_nq_n\bm Z_n\ge C(\sqrt{q_nK_n} + \sqrt{q_nx}+x)}
	\end{align*}
	or equivalently
	\begin{equation*}\textstyle
	\Prob{\bm Z_n\ge C\left( \sqrt{\frac{K_n}{r_n^2q_n}}+\sqrt{\frac{x}{q_nr_n^2}}+\frac{x}{q_nr_n}\right)}\le\exp(-x)
	\end{equation*}
	for some $C>0$. Next, let $1>\Delta>0$. Due to $\sqrt{K_n/q_n}\in o(r_n)$ we can derive an expression for $\Prob{\bm Z_n\ge\Delta}$. Consider $n$ large such that $C\sqrt{K_n/(r_n^2q_n)}\le\Delta/2$, and pick $x\simeq\Delta^2q_nr_n^2$ such that $\sqrt{x/(q_nr_n^2)}+x/(q_nr_n)\leq\Delta/(2C)$. Inserting this into the above inequality we arrive at
	\begin{align*}
	\Prob{\bm Z_n\ge\Delta}\le\exp\left(-C\Delta^2q_nr_n^2\right).
	\end{align*}
	If this expression is summable for every $\Delta$, then $\bm Z_n$ converges almost surely to zero \cite[Thm.\,6.12]{KlenkeProbabilityTheory}. Summability is guaranteed if for $n$ large and for some $\delta>0$ it holds
	\begin{equation*}
	\exp\left(-C\Delta^2q_nr_n^2\right)\le\left(\tfrac{1}{n}\right)^{1+\delta}
	\qquad\Longleftrightarrow\qquad
	\frac{C\Delta^2}{1+\delta}\ge\frac{\log(n)}{q_nr_n^2},
	\end{equation*}
	i.e.\ if $\sqrt{\log(n)/q_n}\in o(r_n)$.
	
	Next, we prove the statement for \ref{item:PPP2}.
	We only consider $r_n\to 0$ since in the other cases $r_n$ can up to a bounded factor be treated like a constant, simplifying all estimates.

	We proceed in a similar way as before, but make use of the special modelling of the random variables via stochastic coupling. For $m\le n$ we define the sets
	\begin{align*}
	\tilde{C}_{n,m}^{k}=\left\{ (x,r)\in\mathbb{X} \ | \ x\in C_n^{k},\ r\le q_m \right\},
	\end{align*}
	i.e.\ the parameter $m$ controls the size of the sets $\tilde{C}_{n,m}^{k}$, which will correspond to controlling the intensity of the PPPs. Next, we can write
	\begin{align*}
	r_nq_n\bm Z_n=\sum_{k=1}^{K_n}\absNorm{\bm E_{q_n}(C_n^{k})-q_n\measPPPSection(C_n^{k})}=\sum_{k=1}^{K_n}\big|\bm Y_{q_n}(\tilde{C}_{n,n}^{k})-\int_{\tilde{C}_{n,n}^{k}}1\dint \gamma_{q_n}\big|.
	\end{align*}
	For $y>1$ we consider the set of indices $Q_n=\left\{ m\in\mathbb{N} \ | \ y^{n-1}<q_m\le y^n \right\}$ and their maximum ${l(n)}=\max Q_n$ for every $n$ with $Q_n\neq\emptyset$. For every $m\in Q_n$ we find (using a similar approach as in the proof of \cite[Thm.\,5.29]{KlenkeProbabilityTheory}), due to the monotonicity of $r_n$ and the assumption $C_n^{k}\subset C_{n-1}^{k'}$ (and hence $\tilde{C}_{n,m}^{k}\subset \tilde{C}_{n-1, m}^{k'}$),
	\begin{align*}
	\bm Z_{m}
	&=\frac{1}{r_{m}q_{m}} \sum_{k=1}^{K_{m}}\absNorm{\bm Y_{q_{m}}(\tilde{C}_{{m},{m}}^{k})-\int_{\tilde{C}_{{m},{m}}^{k}}1\dint \gamma_{q_{m}}}\le\frac{1}{r_{l(n)}y^{n-1}}\max_{l\in Q_n}\sum_{k=1}^{K_l}\absNorm{\bm Y_{q_{l(n)}}(\tilde{C}_{l,l}^{k})-\int_{\tilde{C}_{l,l}^{k}}1\dint \gamma_{q_{l(n)}}} \\
	&\le\frac{1}{r_{l(n)}y^{n-1}}\sum_{k=1}^{K_{l(n)}}\max_{1\le l\le{l(n)}}\absNorm{\bm Y_{q_{l(n)}}(\tilde{C}_{{l(n)},l}^{k})-\int_{\tilde{C}_{{l(n)},l}^{k}}1\dint \gamma_{q_{l(n)}}}\\
	&=\frac{1}{r_{l(n)}y^{n-1}}\sum_{k=1}^{K_{l(n)}}\max_{1\le l\le{l(n)}}\max_{\alpha\in\{\pm 1\}}\alpha \left(\bm Y_{q_{l(n)}}(\tilde{C}_{{l(n)},l}^{k})-\int_{\tilde{C}_{{l(n)},l}^{k}}1\dint \gamma_{q_{l(n)}}\right)\\
	&=\frac{1}{r_{l(n)}y^{n-1}}\max_{\phi\in A_{l(n)}}\int\phi\left( \d  \bm Y_{q_{l(n)}} - \d \gamma_{q_{l(n)}}\right)
	=\frac{1}{r_{l(n)}y^{n-1}}\max_{\phi\in A_{l(n)}}\absNorm{\int\phi\left( \d  \bm Y_{q_{l(n)}} - \d \gamma_{q_{l(n)}}\right)}\eqqcolon \tilde{\bm Z}_n
	\end{align*}
	for the set of functions
	\begin{align*}
	A_{n}=\left\{ \sum_{k=1}^{K_n}\alpha_{k}\mathds{1}_{\tilde{C}_{n,m}^{k}} \ | \ \alpha_{k}\in\{ \pm 1\},\ m\le n \right\}.
	\end{align*}
	We now show almost sure convergence of $\tilde{\bm Z}_n$ to zero which gives us the desired convergence of $\bm Z_{n}$ to zero almost surely. Similar to situation \ref{item:PPP1}, we estimate
	\begin{align*}
	\Expect{\left(r_{l(n)}y^{n-1}\tilde{\bm Z}_n\right)^2}
	&= \Expect{\left(\max_{\phi\in A_{l(n)}} \int\phi\left(\d \bm Y_{q_{l(n)}}-\d \gamma_{q_{l(n)}}\right) \right)^2}\\
	&\le K_{l(n)}\sum_{k=1}^{K_{l(n)}}\Expect{\max_{1\le l\le{l(n)}}\left( \bm Y_{q_{l(n)}}(\tilde{C}_{{l(n)},l}^{k}) -\int_{\tilde{C}_{{l(n)},l}^{k}}1\dint \gamma_{q_{l(n)}}\right)^2}.
	\end{align*}
	Notice that due to the definition of $\bm Y$ we can write $\bm Y_{q_{l(n)}}(\tilde{C}_{{l(n)},l}^{k})$ as a sum
	\begin{equation*}
	\bm Y_{q_{l(n)}}(\tilde{C}_{{l(n)},l}^{k})
	=\sum_{j=1}^l\left(\bm Y(\tilde{C}_{{l(n)},j}^{k}\setminus\tilde{C}_{{l(n)},j-1}^{k}) \right),
	\end{equation*}
	whose summands are independent random variables since the sets $(\tilde{C}_{{l(n)},j}^{k}\setminus\tilde{C}_{{l(n)},j-1}^{k})_j$ are disjoint and $\bm Y$ is a PPP.
	This makes
	\begin{equation*}
	\bm B^k_l
	\coloneqq\bm Y_{q_{l(n)}}(\tilde{C}^k_{l(n),l})-\int_{\tilde{C}^k_{l(n),l}}1\dint\gamma_{l(n)}
	=\sum_{j=1}^l\left(\bm Y\left(\tilde{C}_{{l(n)},j}^{k}\setminus\tilde{C}_{{l(n)},j-1}^{k}\right)-\int\mathds{1}_{\tilde{C}_{{l(n)},j}^{k}\setminus\tilde{C}_{{l(n)},j-1}^{k}}\dint\gamma\right),
	\end{equation*}
	$l=1,\ldots,l(n)$, a martingale \cite[Exm.\,9.30]{KlenkeProbabilityTheory} so that Doob's $L^p$ inequality \cite[Thm.\,11.2]{KlenkeProbabilityTheory} is applicable. Applying the inequality for $p=2$ yields
	\begin{equation*}
	\Expect{\left( \max_{ 1\le l\le l(n)}\absNorm{\bm B^k_l} \right)^2}\le\left(\tfrac{2}{2-1}\right)^2\Expect{\absNorm{\bm B^k_{l(n)}}^2}=4\mathbb{V}\left[\bm Y(\tilde{C}_{{l(n)},{l(n)}}^{k}) \right]=4q_{l(n)}\measPPPSection(C_{l(n)}^{k}).
	\end{equation*}
	This leads to the estimate
	$
	\mathbb{E}[(r_{l(n)}y^{n-1}\tilde{\bm Z}_n)^2]
	\le 4 K_{l(n)}q_{l(n)}\norm{\measPPPSection}
	$
	and thus by Jensen's inequality to
	\begin{equation*}
	\Expect{r_{l(n)}y^{n-1}\tilde{\bm Z}_n}\le 2\sqrt{K_{l(n)}q_{l(n)}\norm{\measPPPSection}}.
	\end{equation*}
	\Cref{thm:PPPEstim} (applied to $\varepsilon=1$, $\measPPPSection_0=q_{l(n)}\norm{\measPPPSection}$ and $b=1$) yields for $x\ge 1$ and some $C>0$
	\begin{multline*}
	\exp(-x)
	\ge\Prob{r_{l(n)}y^{n-1}\tilde{\bm Z}_n\ge 2\Expect{r_{l(n)}y^{n-1}\tilde{\bm Z}_n}+\sqrt{12x q_{l(n)}\norm{\measPPPSection}}+x\kappa(1)}\\
	\ge\Prob{r_{l(n)}y^{n-1}\tilde{\bm Z}_n\ge C\left(\sqrt{K_{l(n)}y^{n}}+\sqrt{y^nx}+x\right)}
	=\Prob{\tilde{\bm Z}_n\ge \tfrac{Cy}{r_{l(n)}}\left(\sqrt{\tfrac{K_{l(n)}}{y^{n}}}+\sqrt{\tfrac{x}{y^n}}+\tfrac{x}{y^n}\right)},
	\end{multline*}
	where we used $q_{l(n)}\le y^n$. Now let $1>\Delta>0$ and note that the condition $\sqrt{K_n/q_n}\in o(r_n)$ implies $\sqrt{K_{l(n)}/y^n}/r_{l(n)}\to0$, which allows to establish a bound on $\Prob{\tilde{\bm Z}_n\ge\Delta}$.
	Taking $n$ large enough such that $Cy\sqrt{K_{l(n)}/y^n}/r_{l(n)}\le\Delta/2$ and picking $x\simeq\Delta^2y^nr_{l(n)}^2$ such that $Cy(\sqrt{x/y^n}+x/y^n)/r_{l(n)}\leq\Delta/2$, we get
	\begin{equation*}\label{eqn:StochConv}
	\Prob{\tilde{\bm Z}_n\ge\Delta}
	\le\exp\left( -\Delta^2Cy^nr_{l(n)}^2 \right)
	=\left(\!\frac{1}{n\log y} \!\right)^{\frac{\Delta^2Cy^nr_{l(n)}^2}{\log\log(y^n)}}
	\end{equation*}
	for some constant $C>0$. If for every $\Delta$ this expression is summable (over all $n$ for which $Q_n\neq\emptyset$ and thus $\tilde{\bm Z}_n$ is well-defined), then $\bm Z_m\le\tilde{\bm Z}_n$ converges to zero almost surely.
	A sufficient condition is that the exponent tends to infinity as $n\to\infty$
	(if all $Q_n$ are nonempty, this would also be necessary).
	Due to $y^{n-1}<q_{{l(n)}}\le y^n$, the exponent can, up to a constant factor, be bounded above and below by $\Delta^2q_{l(n)}r_{l(n)}^2/\log\log(q_{l(n)})$,
	so the condition $\sqrt{\log\log(q_n)/q_n}\in o(r_n)$ indeed suffices for the desired summability.
\end{proof}

A direct consequence is a convergence rate in the flat norm of PPPs with increasing intensity as stated in the remainder of this section.
Recall that the Assouad dimension $\mathrm{dim}_A(X)$ of a bounded metric space $X$ can be defined as the infimal number $a\in[0,\infty]$
such that a (metric) ball of diameter $r$ can be covered by no more than $C(r/s)^a$ many balls of diameter $s<r$ with $C\geq1$ a constant independent of $r$, $s$ \cite{DefinitionAssouadDimension}
(for many spaces, the Assouad dimension simply coincides with the Hausdorff or the Minkowski dimension).

\begin{corollary}[Convergence rate in flat distance]\label{thm:flatConvergence}
Let $X$ be a bounded locally compact metric space with Assouad dimension $\mathrm{dim}_A(X)< a<\infty$,
and let $\bm E_{q_n}$ be a PPP on $X$ with finite intensity measure $q_n\measPPPSection\in\measp(X)$ without atoms.
Then almost surely $\frac{1}{q_n}\bm E_{q_n}$ converges weakly-* to $\measPPPSection$.
Moreover, the convergence rate in flat distance is any rate $s_n$ satisfying $\max\{q_n^{-1/(a+2)},\sqrt{\log n/q_n}\}\in o(s_n)$.
If $\bm E_{q_n}$ is defined via stochastic coupling, the convergence rate is even $s_n=q_n^{-1/(a+2)}$.
\end{corollary}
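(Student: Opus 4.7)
The plan is to reduce the flat-distance bound to the partition-wise $\ell^1$ bound of \cref{thm:stochConv} through a two-scale argument. I first recall that the flat (bounded-Lipschitz) distance between finite Radon measures is
\begin{equation*}
d_F(\mu,\nu) = \sup\Big\{ \textstyle\int_X f\,\d(\mu-\nu) \,:\, f\in C^{0,1}(X),\,\|f\|_\infty\le1,\,\mathrm{Lip}(f)\le1 \Big\},
\end{equation*}
which metrizes weak-$*$ convergence for bounded-mass sequences of finite Radon measures on $X$; hence the weak-$*$ claim follows from any rate. Setting $\mu_n\coloneqq\frac{1}{q_n}\bm E_{q_n}$, for a disjoint partition $(C_n^k)_{k=1}^{K_n}$ of $X$ with maximal diameter $\tilde s_n$ and arbitrary representatives $x_n^k\in C_n^k$, the splitting $f=f(x_n^k)+(f-f(x_n^k))$ on each $C_n^k$ together with the sup-norm and Lipschitz bounds on $f$ yields the key estimate
\begin{equation*}
d_F(\mu_n,\measPPPSection)\le \tilde s_n\bigl(\mu_n(X)+\measPPPSection(X)\bigr) + \sum_{k=1}^{K_n}\absNorm{\mu_n(C_n^k)-\measPPPSection(C_n^k)}.
\end{equation*}
Since $\mu_n(X)\to\measPPPSection(X)$ almost surely (apply \cref{thm:stochConv} to the trivial partition $K_n=1$, or invoke the strong law for Poisson sums), the prefactor is eventually bounded almost surely.

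Next I would construct the partitions from the Assouad condition. Fixing any $a'\in(\dim_A(X),a)$, by definition of $\dim_A$ the bounded space $X$ admits at each scale $\tilde s>0$ a cover by $\lesssim\tilde s^{-a'}$ sets of diameter $\le\tilde s$, which I can turn into a disjoint partition by iteratively removing previously chosen pieces. For case~\ref{item:PPP2} I additionally need nested partitions; I would build them recursively by subdividing each cell of the previous level at the next scale, whereby submultiplicativity of the Assouad bound still yields $K_n\lesssim\tilde s_n^{-a'}$ cells of diameter $\le\tilde s_n$ at level $n$.

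The rate is then a matter of choosing the partition scale. In case~\ref{item:PPP1}, given $s_n$ with $\max\{q_n^{-1/(a+2)},\sqrt{\log n/q_n}\}\in o(s_n)$, I would pick $\tilde s_n$ in the window $(s_n^2 q_n)^{-1/a'}\ll \tilde s_n\ll s_n$, which is nonempty because $(s_n^2q_n)^{-1/a'}\ll s_n\Leftrightarrow s_n\gg q_n^{-1/(a'+2)}$ and $a'<a$ implies $q_n^{-1/(a'+2)}<q_n^{-1/(a+2)}$. This makes the approximation term $o(s_n)$ and $\sqrt{K_n/q_n}\in o(s_n)$, so \cref{thm:stochConv}\ref{item:PPP1} with $r_n=s_n$ renders the second sum $o(s_n)$ almost surely. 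In case~\ref{item:PPP2} I would take $\tilde s_n=s_n=q_n^{-1/(a+2)}$ directly; the strict inequality $a'<a$ gives $\sqrt{K_n/q_n}\lesssim q_n^{(a'-a)/(2(a+2))}s_n\in o(s_n)$, the double-log condition $\sqrt{\log\log q_n/q_n}\in o(q_n^{-1/(a+2)})$ is immediate from polynomial-vs-logarithmic growth, and \cref{thm:stochConv}\ref{item:PPP2} then yields $d_F(\mu_n,\measPPPSection)=O(s_n)$ almost surely.

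The main obstacle is the nested partition construction for case~\ref{item:PPP2}, since the Assouad condition delivers covers independently at each scale without any built-in refinement structure; a recursive subdivision with uniformly controlled splitting ratios has to be set up carefully. A subtler conceptual point is that in case~\ref{item:PPP2} the Assouad exponent $a'$ must be chosen strictly less than $a$ — this is precisely where the strict hypothesis $\dim_A(X)<a$ is used, because with $a'=a$ one would only get $\sqrt{K_n/q_n}=\Theta(s_n)$, which is too weak to feed into \cref{thm:stochConv}.
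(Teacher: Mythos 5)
Your argument is sound and follows the same blueprint as the paper's: reduce the flat distance to a partition error of scale $\tilde s_n$ plus the per-cell $\ell^1$ mass deviation, bound the first by the cell diameter and feed the second into \cref{thm:stochConv}, then use the Assouad assumption to control $K_n$. The only structural difference is cosmetic: you work with the dual bounded-Lipschitz representation and split the test function $f$ cell-by-cell, whereas the paper works with the primal (infimal-convolution) representation, introducing an intermediate measure $\mu_n=\sum_k \tfrac{\measPPPSection(C_n^k)}{E_{q_n}(C_n^k)}E_{q_n}\restr C_n^k$ and an explicit cell-wise transport plan to bound $\mathbb W_1$. These are two faces of the same inequality; your version avoids constructing $\mu_n$ at the cost of an extra (but harmless) $\mu_n(X)\to\measPPPSection(X)$ step. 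Your extra intermediate scale $\tilde s_n$ is a slight over-engineering in case \ref{item:PPP1} (the paper simply takes $\tilde s_n=s_n$ and uses the $o(\cdot)$ slack in $K_n$), but it is equivalent.

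The one place where you are genuinely hand-wavy — and you acknowledge it — is the nested-partition construction in case \ref{item:PPP2}. Simply iterating the Assouad covering bound through the levels accumulates a factor $C^n$ from the covering constant, which destroys the bound $K_n\lesssim\tilde s_n^{-a'}$ if the number of refinement steps grows faster than the logarithm of the scale ratio. The paper resolves this by snapping the diameters to a geometric grid $\ell^{-l(n)}$, using a smaller working exponent $\tilde b\in(\dim_A(X),b)$, and choosing $\ell=C^{1/(b-\tilde b)}$ so that the accumulated constant is absorbed into the exponent margin $b-\tilde b$, yielding $K_n\lesssim\tilde s_n^{-b}$ with $b<a$. "Submultiplicativity" alone does not buy this; the gap between two intermediate exponents plus the geometric grid is essential. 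Your observation that strictness $\dim_A(X)<a$ is needed is correct — it is exactly what supplies this margin — but the mechanism by which it is exploited is the grid/exponent bookkeeping, and that needs to be spelled out.
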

\begin{proof}
Since $\measPPPSection$ is finite, $\bm E_{q_n}$ is almost surely finite and we consider only those realizations of the random variables. For $\omega\in\Omega$ let $E_{q_n}= \bm E_{q_n}(\omega)$ be such a realization. The flat distance between $\measPPPSection$ and $\frac{1}{q_n}E_{q_n}$ is computed by
\begin{equation*}
\mathrm{d}_{\mathrm{flat}}(\measPPPSection,\tfrac{1}{q_n}E_{q_n})=\inf_{\substack{\mu\in\measp(X) \\ \mu(X) = \measPPPSection(X)}}\left(\mathbb{W}_1(\measPPPSection,\mu)+\normS{\mu-\tfrac{1}{q_n}E_{q_n}}\right)
\end{equation*}
with $\mathbb{W}_1$ the Wasserstein-1 distance.
Now consider the measures
\begin{equation*}
	\mu_n=\sum_{k=1}^{K_n}\frac{\measPPPSection(C_n^k)}{E_{q_n}(C_n^k)}E_{q_n}\restr C^k_n
\end{equation*}
for some sequence of partitions $(C_n^k)_{k,n}$ of $X$ into $K_n$ disjoint subsets of diameter no larger than $s_n$.
We have $\mu_n(X)=\measPPPSection(X)$ as well as $\normS{\mu_n-\frac{1}{q_n}E_{q_n}}=\sum_{k=1}^{K_n}\absNormS{\measPPPSection(C_n^k)-\frac{1}{q_n}E_{q_n}(C_n^k)}$
so that \cref{thm:stochConv} applies to this expression.
To estimate $\mathbb{W}_1(\measPPPSection,\mu_n)$ we construct an admissible transport plan: Let $\transportPlan_k^n\in\measp(C_n^k\times C_n^k)$ be the optimal transport plan for the transport of $\measPPPSection\restr C_n^k$ to $\mu_n\restr C_n^k$ and define $\transportPlan^n=\sum_{k=1}^{K_n}\transportPlan_k^n\in\measp(X\times X)$ (we extend each $\transportPlan_k^n$ onto $X\times X$ by zero).
Then the marginals of $\transportPlan^n$ are $\measPPPSection$ and $\mu_n$ so that
\begin{equation*}
\mathbb{W}_1(\measPPPSection,\mu_n)\le\int_{X\times X}\dist(x,y)\dint\transportPlan^n(x,y)
\le\sum_{k=1}^{K_n}\text{diam}(C_n^k)\measPPPSection(C_n^k)
\le s_n\norm{\measPPPSection}.
\end{equation*}
Invoking \cref{thm:stochConv} with rate $r_n$, we get for almost every $\omega\in\Omega$, $E_{q_n}=\bm E_{q_n}(\omega)$,
\begin{align*}
	\mathrm{d}_{\mathrm{flat}}(\measPPPSection,\tfrac{1}{q_n}E_{q_n})\lesssim s_n+\sum_{k=1}^{K_n}\absNormS{\measPPPSection(C_n^k)-\tfrac{1}{q_n}E_{q_n}(C_n^k)}\lesssim s_n+r_n.
\end{align*}
For an optimal rate, we need to minimize this expression under the constraints imposed by \cref{thm:stochConv}.
Since $X$ has Assouad dimension $\mathrm{dim}_A(X)< a$,
we can choose the sequence of partitions $(C_n^k)_{k,n}$ such that $K_n\in o(s_n^{-a})$.
The minimization now leads to the choice $r_n=s_n$ with $s_n$ satisfying the growth condition of the statement.
It is straightforward to check that this choice satisfies the conditions $\sqrt{K_n/q_n}\in o(r_n)$ and $\sqrt{\log n/q_n}\in o(r_n)$ from \cref{thm:stochConv}.

To cover the case of stochastic coupling we have to use nested partitions. Indeed (as shown further below) 
one can even choose the partitions $(C_n^k)_{k,n}$ to satisfy $K_n\lesssim s_n^{-b}$ for any $b\in(\mathrm{dim}_A(X),a)$ and to be nested. Therefore, in the case of stochastic coupling we can pick the rate $r_n=s_n=q_n^{-1/(a+2)}$,
which can readily be seen to satisfy the conditions $\sqrt{K_n/q_n}\in o(r_n)$ and $\sqrt{\log\log(q_n)/q_n}\in o(r_n)$ from \cref{thm:stochConv}.
As for the nestedness, for any $\tilde b\in(\mathrm{dim}_A(X),b)$ let $C\geq1$ be such that any $r$-ball can be covered by no more than $C(r/s)^{\tilde b}$ $s$-balls and pick $\ell=C^{1/(b-\tilde b)}$.
Define $s_0=\ell\mathrm{diam}(X)$ as well as the indices and radii
\begin{equation*}
l(n)=\min\{l\in\Z\,|\,2\ell^{-l}\leq s_n\}
\quad\text{and}\quad
R_n=\ell^{-l(n)}
\quad\text{for }n\in\N_0,
\end{equation*}
thus we have $s_n/\ell\leq2R_n\leq s_n$ for all $n$.
We next inductively define a sequence of coverings $(B_n^k)_{n,k=1,\ldots,K_n}$ of $X$ by balls of radii $R_n$ as follows:
$B_0^1=X$ is a covering of $X$ consisting of a single ball of radius $R_0$, thus $K_0=1$.
Given $(B_n^k)_k$, if $R_{n+1}=R_n$, then we define $(B_{n+1}^k)_k=(B_n^k)_k$ so that $K_{n+1}/K_n=1$.
Otherwise we define $(B_{n+1}^k)_k$ as the union of coverings of each ball $B_n^k$ by at most $C(R_n/R_{n+1})^{\tilde b}$ balls of radius $R_{n+1}$
so that $K_{n+1}/K_n\leq C(R_n/R_{n+1})^{\tilde b}=C(\ell^{l(n+1)-l(n)})^{\tilde b}\leq C^{l(n+1)-l(n)}(\ell^{l(n+1)-l(n)})^{\tilde b}$.
Obviously, in both cases
\begin{equation*}
K_{n+1}/K_n\leq C^{l(n+1)-l(n)}(\ell^{l(n+1)-l(n)})^{\tilde b}.
\end{equation*}
Therefore, by construction, the covering $(B_n^k)_k$ contains at most
\begin{equation*}
K_n
=K_0\cdot\tfrac{K_1}{K_0}\cdot\tfrac{K_2}{K_1}\cdot\ldots\cdot\tfrac{K_n}{K_{n-1}}
\leq C^{l(n)-l(0)}(\ell^{l(n)-l(0)})^{\tilde b}
=\ell^{(l(n)-l(0))b}
=(\tfrac{R_0}{R_n})^b
\lesssim s_n^{-b}
\end{equation*}
balls of diameter no larger than $s_n$.
The partitions $(C_n^k)_{k,n}$ are inductively created from the ball coverings such that $C_n^k\subset B_n^k$ for all $n,k$:
We first set $C_0^1=B_0^1$.
Given $(C_n^k)_{k}$, if $R_{n+1}=R_n$, then we define $(C_{n+1}^k)_k=(C_n^k)_k$.
Otherwise, if $B_{n+1}^{k_1},\ldots,B_{n+1}^{k_j}$ are the covering of $B_n^k\supset C_n^k$, then we set
\begin{equation*}
C_{n+1}^{k_i}=\left(B_{n+1}^{k_i}\cap C_n^k\right)\setminus\bigcup_{l=1}^{i-1}C_{n+1}^{k_l},
\quad i=1,\ldots,j.
\end{equation*}
Thus, by construction the partitions $(C_n^k)_{k,n}$ are nested and have $K_n\lesssim s_n^{-b}$ subsets as desired.

Finally, for a bounded sequence of measures flat implies weak-* convergence.
\end{proof}

\notinclude{
\begin{corollary}[Convergence rate in flat distance]
Consider the setting of \cref{thm:stochConv} with $X$ a metric space of finite diameter. Assume that the partitions $X=\bigcup_{k=1}^{K_n}C_n^k$ satisfy $\max_k\mathrm{diam}(C_n^k)\le s_n$ for a monotone sequence $s_n\to 0$.
Then almost surely $\frac{1}{q_n}\bm E_{q_n}$ converges to $\measPPPSection$ in the flat distance with rate $s_n+r_n$, where $r_n$ can be chosen according to \cref{thm:stochConv}.

If in addition $K_n\lesssim s_n^{-a}$ for some $a>0$ and $q_n^{-\frac{1}{a+2}}\in o(s_n)$
and for setting  \ref{item:PPP1} additionally $\log(n)\lesssim q_n^{\frac{a}{a+2}}$,
then $\frac{1}{q_n}\bm E_{q_n}$ converges almost surely in the flat distance with rate $s_n$.
\end{corollary}
\todo[inline]{Theorem statement is somewhat weird -- one gets flat convergence, which does not depend on a partition, but the rate depends on the partition.
Also we should maybe add as a consequence that we have weak-* convergence, since flat convergence and boundedness imply weak-* convergence.}
\begin{proof}
Since $\measPPPSection$ is finite, $\bm E_{q_n}$ is almost surely finite and we consider only those realizations of the random variables. For $\omega\in\Omega$ let $E_{q_n}= \bm E_{q_n}(\omega)$ be such a realization. The flat distance between $\measPPPSection$ and $\frac{1}{q_n}E_{q_n}$ is computed by
\begin{equation*}
\mathrm{d}_{\mathrm{flat}}(\measPPPSection,\tfrac{1}{q_n}E_{q_n})=\inf_{\substack{\mu\in\measp(X) \\ \mu(X) = \measPPPSection(X)}}\left(\mathbb{W}_1(\measPPPSection,\mu)+\normS{\mu-\tfrac{1}{q_n}E_{q_n}}\right)
\end{equation*}
with $\mathbb{W}_1$ the Wasserstein-1 distance.
Now consider the measures
\begin{equation*}
\mu_n=\sum_{k=1}^{K_n}\sum_{x\in E_{q_n}\cap C_n^k}\frac{\measPPPSection(C_n^k)}{E_{q_n}(C_n^k)}\delta_x
\end{equation*}
\todo[inline]{In previous line we use $E_{q_n}$ once as point set and once as measure...}
which satisfy $\mu_n(X)=\measPPPSection(X)$ as well as $\normS{\mu_n-\frac{1}{q_n}E_{q_n}}=\sum_{k=1}^{K_n}\absNormS{\measPPPSection(C_n^k)-\frac{1}{q_n}E_{q_n}(C_n^k)}$, i.e.\ \cref{thm:stochConv} applies to this expression.
To estimate $\mathbb{W}_1(\measPPPSection,\mu_n)$ we construct an admissible transport plan: Let $\transportPlan_k^n\in\measp(C_n^k\times C_n^k)$ be the optimal transport plan for the transport of $\measPPPSection\restr C_n^k$ to $\mu_n\restr C_n^k$ and define $\transportPlan^n=\sum_{k=1}^{K_n}\transportPlan_k^n\in\measp(X\times X)$ (we extend each $\transportPlan_k^n$ onto $X\times X$ by zero).
Then the marginals of $\transportPlan^n$ are $\measPPPSection$ and $\mu_n$ so that
\begin{equation*}
\mathbb{W}_1(\measPPPSection,\mu_n)\le\int_{X\times X}\dist(x,y)\dint\transportPlan^n(x,y)
\le\sum_{k=1}^{K_n}\text{diam}(C_n^k)\measPPPSection(C_n^k)
\le s_n\norm{\measPPPSection}.
\end{equation*}
Invoking \cref{thm:stochConv} with rate $r_n$, we get for almost every $\omega\in\Omega$
\begin{align*}
	\mathrm{d}_{\mathrm{flat}}(\measPPPSection,\tfrac{1}{q_n}E_{q_n})\lesssim s_n+\sum_{k=1}^{K_n}\absNormS{\measPPPSection(C_n^k)-\tfrac{1}{q_n}E_{q_n}(C_n^k)}\lesssim s_n+r_n.
\end{align*}

To get an optimal rate, we must find parameters such that $r_n$ converges as fast as $s_n$,
so suppose we have $r_n\simeq s_n$ and $K_n\lesssim s_n^{-a}$. Then the condition $\sqrt{K_n/q_n}\in o(r_n)$ from \cref{thm:stochConv} is satisfied if $q_n^{-1/(a+2)}\in o(s_n)$.
Further, in setting  \ref{item:PPP1} the condition $\sqrt{\log(n)/q_n}\in o(r_n)$ is satisfied due to $\log(n)\in\mathcal{O}( q_n^{a/(a+2)})$ so that \cref{thm:stochConv} applies.
In setting \ref{item:PPP2}, the condition $\sqrt{\log\log(q_n)/q_n}\in o(r_n)$ is automatically fulfilled so that again \cref{thm:stochConv} applies.
\end{proof}

\begin{remark}
For our PET model we will have $X=\boundDomDeltaSq$ with $\partial\domDelta$ Lipschitz and compact. In this case we can find partitions $(C_j^n)_{j,n}$ such that $K_n\lesssim(1/s_n)^a$. To see this we proceed similar to the proof of \cref{LemmaVitaliRelations}. By compactness of the Lipschitz boundary $\partial\domDelta$ we can find finitely many Lipschitz maps $f_k$, $k=1,\ldots ,K$, with $\mathrm{Lip}(f_k)\le L$ and rotations $R_k$ such that $\partial\domDelta\subset\bigcup_{k=1}^KU_k$ where $U_k = R_k\left\{ (x,y,f_k(x,y)) \ | \ (x,y)\in(a,b)^2 \right\} $ for some real numbers $a<b$\todo[inline]{can we always find rectangle?}. We now divide $(a,b)^2$ into ${l(n)}_n=\left( \ceil{(b-a)/s_n} \right)^2$ many rectangles $(r_n^{k,i})_{i=1}^{K_n^k}$ of side length at most $s_n$. Then, for each $r_n^{k,i}$ we set $\tilde{A}_n^{k,i}=R_k\{ (x,y,f_k(x,y)) \ | \ (x,y)\in r_n^{k,i} \}\subset\partial\domDelta$. In order to get a partition of $\partial\domDelta$ we need to consider overlapping sets. If $\tilde{A}_n^{k,i}\cap\tilde{A}_n^{k',i'}\neq\emptyset$, then $A_n^{k,i}=\tilde{A}_n^{k,i}$ and $A_n^{k',i'} = \tilde{A}_n^{k',i'}\setminus\tilde{A}_n^{k,i}$. If there still is an overlap with another set, then we repeat this procedure. This way we receive a partition of $\partial\domDelta$ consisting of $K{l(n)}_n\lesssim(1/s_n)^2$ many sets with $\mathrm{diam}(A_n^{k,i})\le C(L,a,b)(1/s_n)^2$. \todo[inline]{do we also need a lower bound to meet general assumption on detectors?}
\end{remark}
}
\section{Benamou--Brenier regularization}\label{sec:BenamouBrenier}
Here we analyze the convergence properties of the Benamou--Brenier regularization \eqref{eqn:BenamouBrenier}.
The latter fits well to dynamic inverse problems in which a sought mass distribution (here the radionuclide distribution) changes over time while the total mass is conserved.

The involved continuity equation \eqref{eqn:CE_introduction} is to be understood in the distributional sense, i.e.\
\begin{equation}\label{eqn:defCE}
\int_{[0,T]\times\dom}\partial_t\varphi \dint \density + \int_{[0,T]\times\dom}\dotProd{\nabla_x\varphi}{\dint \momentum} = 0
\qquad\text{for all }\varphi\in C_c^1((0,T)\times\dom).
\end{equation}
Note that by \cite[Lemma 1.1.2]{chizat_unbalancedOT}, since $\dom$ is compact, any $\density$ satisfying \eqref{eqn:defCE} lies in
\begin{equation}\label{eqn:constantMass}
\meas_c=\{ \density\in\measp(\timeInterval\!\times\!\dom)\ | \ \density\text{ disintegrates to } \density=\d t\otimes\densityT{t}, \ \densityT{t}(\dom)\text{ independent of }t\text{ a.e.} \}
\end{equation}
(i.e.\ we have mass conservation in time).
The Benamou--Brenier regularization \eqref{eqn:BenamouBrenier} for $\density\in\measp(\spacetime)$ and $\momentum\in\meas([0,T]\times\R^3)^3$ can now be more precisely expressed as
\begin{equation}
\BBEnergy(\density,\momentum)=
\begin{cases}
\int_0^T\int_\dom\left(\frac{\d \momentum_t}{\d \densityT{t}}\right)^2\dint\densityT{t}\dint t&\text{if }\density\ge 0, \ \momentum\ll\density \text{ and \eqref{eqn:defCE} holds,}\\
\infty&\text{else.}
\end{cases}
\end{equation}
\begin{remark}[Properties of $\BBEnergy$]\label{rem:propertiesBenamouBrenier}
The following properties can e.g.\ be found in \cite{BenamouBrenier,OTAppliedMath,chizat_unbalancedOT}.

\begin{enumerate}[label=(\alph*)]
	\item\label{enm:Slsc}
	$\BBEnergy$ is nonnegative, convex, and lower semi-continuous w.r.t.\ weak-* convergence.
	\item\label{enm:massConservation}
	If $\BBEnergy(\density,\momentum)<\infty$, then $\density\ge 0$, $\momentum\ll\density$ \cite[Prop.\,5.18]{OTAppliedMath}, and $\density\in\meas_c$ \cite[Lemma 1.1.2]{chizat_unbalancedOT}.
	Furthermore, $t\mapsto\densityT{t}$ is weakly-* continuous \cite[Prop.\,1.1.3]{chizat_unbalancedOT}.
	\item
  $\BBEnergy$ can be used to compute the Wasserstein-2 distance $\mathbb W_2(\mu,\nu)$ between two nonnegative measures $\mu$ and $\nu$ of same mass by
	\begin{align*}
	\mathbb W_2^2(\mu,\nu)
	&=\min\left\{ \int_0^1 \norm{ \frac{\d \momentum_t}{\d \densityT{t}}}_{\mathrm{L}^2(\densityT{t})}^2\d  t \ \middle| \ \partial_t\density+\mdiv\,\momentum=0, \ \density_0=\mu, \ \density_1=\nu \right\}\\
	&=\min\left\{T\BBEnergy(\density,\momentum)\ \middle|\ \density\in\measp(\spacetime),\momentum\in\meas([0,T]\times\R^3)^3,\density_0=\mu, \density_T=\nu\right\}.
	\end{align*}
	\item For every absolutely continuous curve $(\densityT{t})_{t\in[0,T]}$ in the Wasserstein-2 space (the set of constant mass measures with metric $\mathbb{W}_2$) there exists a vector field $v_t\in \mathrm{L}^2(\densityT{t})^3$ such that $\partial_t\densityT{t}+\mdiv\,(v_t\densityT{t})=0$ and $\BBEnergy(\density,\momentum)=\int\normS{v_t}^2_{\mathrm{L}^2(\densityT{t})}\dint t<\infty$ \cite[Thm.\,5.14]{OTAppliedMath} for $\momentum=v\density$.
\end{enumerate}
\end{remark}
A direct consequence is the lower semi-continuity after minimizing for the momentum.
\begin{lemma}[Lower semi-continuity of $\BBEnergy$]\label{thm:continuityBBEnergy}
The function $\density\mapsto\min_\momentum\BBEnergy(\density,\momentum)\in[0,\infty]$ is well-defined, convex and weakly-* lower semi-continuous.
\end{lemma}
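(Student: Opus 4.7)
The plan is to derive all three properties from the joint convexity and weak-* lower semi-continuity of $\BBEnergy$ (Remark 3.1\ref{enm:Slsc}) together with a coercivity estimate that controls $\norm{\momentum}$ by $\norm{\density}$ and $\BBEnergy(\density,\momentum)$. The key inequality I would establish first is that whenever $\BBEnergy(\density,\momentum)<\infty$, so that $\density = \d t \otimes \densityT t \in \meas_c$ and $\momentum = v\density$ for some $\momentum$-measurable $v$, a double application of the Cauchy--Schwarz inequality gives
\begin{equation*}
\norm{\momentum} = \int_0^T\!\!\int_\dom \absNorm{v_t}\dint\densityT t\dint t \le \sqrt{T\,\norm{\density}}\,\sqrt{\BBEnergy(\density,\momentum)},
\end{equation*}
using that $\densityT t(\dom)=\norm{\density}/T$ is constant in $t$.

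For well-definedness, I would fix $\density$; if $\{\momentum : \BBEnergy(\density,\momentum)<\infty\}$ is empty, the minimum is $+\infty$. Otherwise the above coercivity bound shows that the sublevel sets of $\momentum\mapsto\BBEnergy(\density,\momentum)$ are bounded in $\meas([0,T]\times\R^3)^3$, hence weakly-* sequentially precompact by Banach--Alaoglu. Combined with weak-* lower semi-continuity of $\BBEnergy(\density,\cdot)$, the direct method yields a minimizer. Convexity then follows directly from the joint convexity of $\BBEnergy$: given $\density^0,\density^1$ with (near-)minimizers $\momentum^0,\momentum^1$ and $\lambda\in[0,1]$, the convex combination $\lambda\momentum^0+(1-\lambda)\momentum^1$ is admissible for $\lambda\density^0+(1-\lambda)\density^1$ (the continuity equation is linear), so
\begin{equation*}
\min_\momentum \BBEnergy(\lambda\density^0+(1-\lambda)\density^1,\momentum)\le \lambda\BBEnergy(\density^0,\momentum^0)+(1-\lambda)\BBEnergy(\density^1,\momentum^1),
\end{equation*}
and taking infima on the right establishes the claim.

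For weak-* lower semi-continuity, I would take $\density_n\convStar\density$ and assume, after passing to a subsequence, that $\liminf_n\min_\momentum\BBEnergy(\density_n,\momentum)=L<\infty$ is realized. Let $\momentum_n$ be minimizers for $\density_n$. Since weak-* convergent sequences in $\meas(\spacetime)$ are norm-bounded (uniform boundedness principle on the compact domain), $\sup_n\norm{\density_n}<\infty$; the coercivity estimate then yields $\sup_n\norm{\momentum_n}<\infty$, so a further subsequence satisfies $\momentum_n\convStar\momentum$. Joint weak-* lower semi-continuity of $\BBEnergy$ gives
\begin{equation*}
\min_{\tilde\momentum}\BBEnergy(\density,\tilde\momentum)\le \BBEnergy(\density,\momentum)\le \liminf_n \BBEnergy(\density_n,\momentum_n)=L,
\end{equation*}
which is the desired inequality. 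The main obstacle is really just the coercivity estimate tying $\norm{\momentum}$ to $\BBEnergy$; once that is in place, everything else is a standard application of the direct method.
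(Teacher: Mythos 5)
Your proof is correct and follows essentially the same strategy as the paper: establish the coercivity bound $\norm{\momentum}\lesssim\sqrt{\norm{\density}}\sqrt{\BBEnergy(\density,\momentum)}$ (the paper gets this from a single application of Jensen/Cauchy--Schwarz on the measure $\density$ itself, which also avoids your spurious $\sqrt{T}$ factor, though that slack is harmless), extract a weakly-* convergent subsequence of (near-)minimizing momenta, and close with joint weak-* lower semi-continuity of $\BBEnergy$. The paper obtains well-definedness as a byproduct by running the same compactness argument with the constant sequence $\density_n=\density$, whereas you establish it first via the direct method, but this is only a presentational difference.
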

\begin{proof}
The convexity is a standard consequence of $\BBEnergy$ being convex.
Now take a sequence $\densityN{n}\convStar\density$ and let $\momentumN{n}$ such that $\BBEnergy(\densityN{n},\momentumN{n})\leq\inf_\momentum\BBEnergy(\densityN{n},\momentum)+\frac1n$.
Without loss of generality we may assume $\liminf_{n\to\infty}\inf_\momentum\BBEnergy(\densityN{n},\momentum)=\lim_{n\to\infty}\inf_\momentum\BBEnergy(\density_{n},\momentum)=\lim_{n\to\infty}\BBEnergy(\density_{n},\momentum_n)$
(else we can pass to a subsequence)
as well as $\lim_{n\to\infty}\BBEnergy(\densityN{n},\momentum_n)<\infty$ (else there is nothing to show). Jensen's inequality implies
\begin{equation*}
\label{EQGammConvNormMomentum}
\norm{\momentumN{{n}}}=\norm{\tfrac{\d \momentumN{{n}}}{\d \densityN{{n}}}}_{\mathrm{L}^1(\densityN{{n}})}\le\norm{\densityN{{n}}}^{\frac{1}{2}}\BBEnergy(\densityN{{n}},\momentumN{{n}})^{\frac{1}{2}}\lesssim1
\end{equation*}
so that we have a uniform bound on $\norm{\momentumN{{n}}}$ and can thus extract a weakly-* converging subsequence (still indexed by $n$) $\momentumN{{n}}\convStar\momentum_\infty$. By weak-* lower semi-continuity of $\BBEnergy$ (\cref{rem:propertiesBenamouBrenier}\ref{enm:Slsc}),
\begin{equation*}
\liminf_n\inf_\momentum\BBEnergy(\densityN{n},\momentum)=\lim_{n}\BBEnergy(\densityN{{n}},\momentumN{{n}})\ge\BBEnergy(\density,\momentum_\infty)\ge\inf_\momentum\BBEnergy(\density,\momentum),
\end{equation*}
thus the function is weakly-* lower semi-continuous.
Repeating the above derivations for $\density_n=\density$ shows existence of minimizers (allowing the minimal value to be infinite) so that the function is well-defined.
\end{proof}

Also note for later use that $\meas_c$ is weakly-* closed.
\begin{lemma}[Weak-* closedness of mass conservation]\label{thm:closednessOfBSet}
$\meas_c$
is closed under weak-* convergence, and $\norm{\densityN{n}}\to\norm\density$ for any $\densityN{n}\weakstarto\density$ in $\meas_c$.
\end{lemma}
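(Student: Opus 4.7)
The plan is to exploit compactness of $[0,T]\times\dom$, which ensures that weak-* convergence in $\meas([0,T]\times\dom)$ is dual convergence against every $\varphi\in C([0,T]\times\dom)$, including the constant function $1$ and all functions depending only on time. First I would obtain the mass convergence: since $\densityN{n}\geq0$, we have $\norm{\densityN{n}}=\int 1\,\d\densityN{n}$, and testing weak-* convergence against $\varphi\equiv1$ yields $\int 1\,\d\densityN{n}\to\int 1\,\d\density$. Testing against arbitrary nonnegative $\varphi\in C([0,T]\times\dom)$ shows that $\density\geq0$, hence $\int 1\,\d\density=\norm{\density}$, which already proves $\norm{\densityN{n}}\to\norm{\density}$.

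Next, to show $\density\in\meas_c$, I would identify the time-marginal of $\density$. For each $n$, let $c_n:=\densityTN{t}{n}(\dom)$, which is independent of $t$; then $\norm{\densityN{n}}=c_n T$, and the previous step gives $c_n\to c:=\norm{\density}/T$. For any $\psi\in C([0,T])$ the function $(t,x)\mapsto\psi(t)$ lies in $C([0,T]\times\dom)$, so
\begin{equation*}
\int_{[0,T]\times\dom}\psi(t)\,\d\density(t,x)
=\lim_{n\to\infty}\int_{[0,T]\times\dom}\psi(t)\,\d\densityN{n}(t,x)
=\lim_{n\to\infty}c_n\int_0^T\psi(t)\,\d t
=c\int_0^T\psi(t)\,\d t.
\end{equation*}
This identifies the projection of $\density$ onto $[0,T]$ with $c\,\d t$. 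Disintegrating $\density$ over its time-marginal then yields probability measures $(\tilde\density_t)_{t\in[0,T]}$ on $\dom$ with $\density=c\,\d t\otimes\tilde\density_t=\d t\otimes\densityT{t}$ for $\densityT{t}:=c\tilde\density_t$, and $\densityT{t}(\dom)=c$ for a.e.\ $t$. Hence $\density\in\meas_c$.

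I do not expect a substantial obstacle: the argument merely unpacks what weak-* convergence delivers when tested against $1$ and against purely temporal functions, followed by a routine disintegration. The essential point that keeps the proof this short is the compactness of the spatial and temporal domains, which makes constants and time-only functions admissible test functions; this would fail on a non-compact space, where mass could escape to infinity under weak-* convergence and the identification of the time-marginal would require additional tightness arguments.
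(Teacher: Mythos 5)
Your proof is correct and follows essentially the same route as the paper: the paper identifies the time-marginal $\pushforward{(\pi_1)}\density$ via continuity of the projection $\pi_1$ and weak-* convergence, which is precisely your step of testing against time-only continuous functions, and then applies the disintegration theorem. The only cosmetic difference is that you extract the mass convergence explicitly at the start (which the paper leaves implicit in $c=\lim c_n$), but the underlying argument is identical.
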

\begin{proof}
Let $\densityN{n}\convStar\density$ in $\measp(\spacetime)$ with $\densityN{n}\in\meas_c$ for all $n$.
Consider the projection $\pi_1\colon\spacetime\rightarrow[0,T]$, $(t,x)\mapsto t$.
We have $\pushforward{(\pi_1)}{\densityN{n}}=c_n\mathcal{L}^1\restr[0,T]$ for some constant $c_n=\densityTN{t}{n}(\dom)\geq0$.
Since $\pi_1$ is continuous, $\pushforward{(\pi_1)}{\densityN{n}}\convStar\pushforward{(\pi_1)}\density$
and therefore $\pushforward{(\pi_1)}\density=c\mathcal{L}^1\restr[0,T]$ for $c=\lim_{n\to\infty}c_n\geq0$.
By the disintegration theorem this implies $\density=\d t\otimes\densityT{t}$ and $\densityT{t}(\dom)=c$.
\end{proof}

The following lemma is required in our variational analysis of the reconstruction functional to define the recovery sequence of the $\Gamma$-convergence.
More specifically, it is necessary in case the ground truth has (infinite) Benamou--Brenier energy since one wants the ground truth to be recovered in the limit despite the regularization discouraging it. 
\begin{lemma}[Bounded growth of $\BBEnergy$]\label{thm:RecoverySequence}
Let $\density=\d t\otimes \densityT{t}\in\measp(\spacetime)$ and $\densityT{t}(\dom)=\text{const.}$ almost everywhere. For any sequence $\delta_n\to0$ we can find a sequence of Radon measures $(\densityN{n},\momentumN{n})_n\subset\measp(\spacetime)\times\meas([0,T]\times\R^3)^3$ such that $\densityN{n}\convStar\density$ and $\BBEnergy(\densityN{n},\momentumN{n})\leq1/\delta_n$, and we can additionally choose one of the following assertions to hold:
\begin{enumerate}[label=(\alph*)]
\item
$\mathbb{W}_2(\densityN{n},\density)\lesssim(\delta_n)^{1/6}$.
\notinclude{ and
\begin{align*}
\left((t,x)\mapsto\int_\dom f(x,y)\dint\densityTN{t}{n}(y)\right)\xrightarrow{\mathrm{L}^2(\spacetime)}\left((t,x)\mapsto\int_\dom f(x,y)\dint\densityT{t}(y)\right)
\end{align*}}

\item
\label{enm:RecoverySequenceB} Given a nonnegative null sequence $\Delta T_n\to0$ there exists a subsequence along which $\mathbb W_2(\densityTN{t_n}{n},\densityT{t})\to0$
for almost every $t$ and any sequence $t_n\in[0,T]$ with $\absNorm{t_n-t}\le\Delta T_n$.
\end{enumerate}
\end{lemma}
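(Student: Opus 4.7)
My approach is to regularize $\density$ via combined spatial and temporal mollification, with a preparatory rescaling toward the origin (exploiting $0\in\mathrm{int}(\dom)$) to keep supports inside $\dom$; the corresponding momentum is then obtained via an elliptic form of the continuity equation. Extend $t\mapsto\densityT{t}$ to $\R$ by $\densityT{t}:=\densityT{0}$ for $t<0$ and $\densityT{t}:=\densityT{T}$ for $t>T$ (preserving mass constancy). Pick scales $\tau_n,\epsilon_n\downarrow 0$ and $s_n\uparrow 1$ satisfying $\epsilon_n<(1-s_n)\dist(0,\partial\dom)$, and set
\begin{equation*}
\densityTN{t}{n}:=\int_\R\mollifierTime_{\tau_n}(t-s)\,\big(\pushforward{(s_n\cdot)}{\densityT{s}}\big)\ast\mollifier_{\epsilon_n}\dint s,
\qquad
\densityN{n}:=\dint t\otimes\densityTN{t}{n},
\end{equation*}
so that $\densityN{n}\in\measp(\spacetime)$ is smooth in $(t,x)$, has $t$-independent mass, and satisfies $\densityN{n}\convStar\density$. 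Because $\densityTN{t}{n}$ has smooth density bounded below on its support and $\partial_t\densityTN{t}{n}$ is a smooth function with vanishing spatial integral (obtained by differentiating the defining formula in $t$), the elliptic Neumann problem $\mdiv(\densityTN{t}{n}\nabla\psi_{t,n})=-\partial_t\densityTN{t}{n}$ admits a smooth solution; I set $\momentumTN{t}{n}:=-\densityTN{t}{n}\nabla\psi_{t,n}$, making $(\densityN{n},\momentumN{n})$ an admissible pair for the continuity equation with $\BBEnergy(\densityN{n},\momentumN{n})=\int_0^T\|\nabla\psi_{t,n}\|_{\mathrm{L}^2(\densityTN{t}{n})}^2\dint t$ controlled polynomially in $\tau_n^{-1},\epsilon_n^{-1},(1-s_n)^{-1}$ via standard elliptic estimates, so that a slow enough choice of the parameters yields $\BBEnergy(\densityN{n},\momentumN{n})\le 1/\delta_n$.

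For (a) I build the explicit transport plan
\begin{equation*}
\dint\transportPlan_n((t,x),(t',x'))=\mollifierTime_{\tau_n}(t'-t)\,\mollifier_{\epsilon_n}(x'-s_n x)\,\dint t'\,\dint x'\,\d\density(t,x),
\end{equation*}
whose first marginal is $\density$ and whose second marginal equals $\densityN{n}$ by construction. Its squared cost splits into a temporal contribution of order $\tau_n^2\|\density\|$, a shrinkage contribution of order $(1-s_n)^2\mathrm{diam}(\dom)^2\|\density\|$, and a spatial mollification contribution of order $\epsilon_n^2\|\density\|$, giving $\mathbb W_2(\density,\densityN{n})\lesssim\tau_n+\epsilon_n+(1-s_n)$. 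Choosing $\tau_n,\epsilon_n,1-s_n\sim\delta_n^{1/6}$ then delivers the stated rate, and a compatibility check shows this is consistent with the BB bound since the polynomial-in-inverses elliptic estimate grows at worst like $\delta_n^{-k/6}$ for some bounded $k$ (the scaling being governed by $\|\partial_t\densityTN{t}{n}\|\sim\tau_n^{-1}$).

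For (b), take a countable dense family $\{\varphi_k\}\subset C(\dom)$; for each $k$ the map $s\mapsto\densityT{s}(\varphi_k)$ lies in $\mathrm{L}^1([0,T])$, so by the Lebesgue differentiation theorem combined with the uniform convergence $\varphi_k(s_n\cdot)\ast\mollifier_{\epsilon_n}\to\varphi_k$ on $\dom$, one obtains $\densityTN{t_n}{n}(\varphi_k)\to\densityT{t}(\varphi_k)$ for a.e.\ $t$ and any sequence $t_n$ with $|t_n-t|\le\Delta T_n\to 0$, provided $\tau_n,\epsilon_n,\Delta T_n\to 0$. A standard diagonal extraction then yields a subsequence along which this holds for all $\varphi_k$ on a common full-measure set of $t$, so that $\densityTN{t_n}{n}\convStar\densityT{t}$; together with conservation of total mass and compactness of $\dom$, this upgrades to $\mathbb W_2(\densityTN{t_n}{n},\densityT{t})\to 0$ as required.

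The main technical obstacle is verifying that the polynomial scaling of the elliptic bound on $\BBEnergy(\densityN{n},\momentumN{n})$ in terms of $\tau_n^{-1},\epsilon_n^{-1},(1-s_n)^{-1}$ is simultaneously compatible with the parameter choice $\tau_n,\epsilon_n,1-s_n\sim\delta_n^{1/6}$ in (a); this is the step that ties together the Wasserstein rate and the Benamou--Brenier bound and pins down the exponent $1/6$.
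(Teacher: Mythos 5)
Your construction of $\densityN{n}$ by temporal mollification, spatial rescaling, and spatial mollification is fine as far as it goes, and the transport-plan estimate for part (a) and the Lebesgue-point/diagonal argument for part (b) are both in the same spirit as the paper's Steps 3 and 4. The decisive gap is in the momentum construction. You assert that $\densityTN{t}{n}$ ``has smooth density bounded below on its support,'' and this is false: a spatially mollified compactly supported measure has a density that decays smoothly to zero at the boundary of its (interior) support (think of $\densityT{t}=\delta_{x_0}$, whose mollification is $\mollifier_{\epsilon_n}(\,\cdot\,-s_nx_0)$). Consequently the operator $\mdiv(\densityTN{t}{n}\nabla\,\cdot)$ in your Neumann problem is degenerate, ``standard elliptic estimates'' do not apply, and there is no obvious bound on $\int\densityTN{t}{n}\absNorm{\nabla\psi_{t,n}}^2\dint x$; in fact, establishing that a square-integrable potential $\psi_{t,n}$ exists with finite weighted energy is essentially the same as establishing the finite metric derivative you are trying to produce, so the elliptic route is close to circular here. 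Even if one tries to repair this by adding a uniform floor $\eta_n\mathcal{L}^3\restr\dom$ (renormalizing mass so the pair still satisfies the continuity equation), a quick scaling check gives $\BBEnergy\lesssim\eta_n^{-1}\tau_n^{-2}\epsilon_n^{-3}$ from the crude $\|\partial_t\densityTN{t}{n}\|_{\mathrm{L}^2}\lesssim\tau_n^{-1}\epsilon_n^{-3/2}$, while the extra $\mathbb{W}_2$-error from the floor is of order $\sqrt{\eta_n}$; balancing these yields a rate strictly worse than $\delta_n^{1/6}$. So the ``compatibility check'' you flag at the end as the main technical obstacle is in fact a genuine obstacle, and it is not settled by ``a slow enough choice of the parameters.''

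The paper sidesteps the degeneracy entirely: after temporal mollification (by reflection, not by constant extension) it discretizes $[0,T]$ into $k$ subintervals, mollifies the time slices spatially, and builds the velocity by linearly interpolating the optimal transport maps $T^{i,k}$ between consecutive slices (the construction of \cite[Thm.\,5.14]{OTAppliedMath}). The resulting bound
\begin{equation*}
\normS{\overline v^k_{\varepsilon,t}}^2_{\mathrm{L}^2(\overline\density^k_{\varepsilon,t})}
=\tfrac{k^2}{T^2}\mathbb{W}_2^2(\overline\density^k_{\varepsilon,Ti/k},\overline\density^k_{\varepsilon,T(i+1)/k})
\lesssim\tfrac{k}{\varepsilon^2}
\end{equation*}
comes directly from the H\"older-$\tfrac12$ estimate \eqref{eqn:recoverySeqW1} and requires no lower bound on the density. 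Choosing $\varepsilon_n\simeq\sqrt{k_n\delta_n}$ and $k_n=\ceil{\delta_n^{-2/3}}$ then gives $\BBEnergy(\densityN{n},\momentumN{n})\lesssim1/\delta_n$ together with $\mathbb{W}_2(\densityN{n},\density)\lesssim\delta_n^{1/6}$. If you want to keep the spirit of your approach, replace the elliptic step with this optimal-transport interpolation; the rest of your proof would then carry over.

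Two smaller points. First, as written, the second marginal of your transport plan $\transportPlan_n$ is a measure supported on $[-\tau_n,T+\tau_n]\times\dom$ and is not exactly $\densityN{n}$ as you define it on $\spacetime$; you need to build your constant (or reflective) extension into the plan itself, as the paper does in Step 3c. Second, part (b) of your proof is essentially the same diagonal-extraction argument as the paper's Step 4 and is correct modulo the above issues.
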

\begin{proof}
We mainly adapt the constructions from \cite[Thm.\,5.14]{OTAppliedMath} to our setting. We break down the proof into multiple steps.

\paragraph{Step 1: Approximating $\density$ by regular measures $\densityN{n}$ and constructing corresponding momenta $\momentumN{n}$.}
The curve $t\mapsto\densityT{t}$ might not be continuous w.r.t.\ the weak-* convergence and hence might not admit any $\momentum$ with $\BBEnergy(\density,\momentum)<\infty$. Therefore we approximate $\density$ by more regular measures that allow finite $\BBEnergy$.

\paragraph{Step 1a: Smoothing in time.}
Define the temporal mollifier
\begin{equation*}
\mollifierTime(t)=
\begin{cases}
c\exp\left(\frac{1}{t^2-1}\right) \ &\text{if }\absNorm{t}<1,\\
0 \ &\text{if }\absNorm{t}\ge 1 
\end{cases}
\quad\text{with }c\text{ such that } \ \ \int_\R\mollifierTime(t)\dint t=1,
\end{equation*}
and set $\mollifierTime_\varepsilon(t)=\frac{1}{\varepsilon}\mollifierTime\left(\frac{t}{\varepsilon}\right)$.
We then extend $\density$ onto the time interval $[-T,2T]$ by reflection in time,
\begin{equation*}
\densityT{t}=\density_{R(t)}
\qquad\text{with }
R(t)=-t
\text{ for $t<0$, }
R(t)=2T-t
\text{ for $t>T$, and }
R(t)=t
\text{ else,}
\end{equation*}
and define $\density_\varepsilon=\d t\otimes\density_{\varepsilon, t}\in\measp(\spacetime)$ as the temporal convolution of $\density$ with $\mollifierTime_\varepsilon$,
\begin{equation*}
\density_{\varepsilon,t}=
\int_{-\varepsilon}^{T+\varepsilon}\mollifierTime_\varepsilon(t-s)\density_s\dint s=(\mollifierTime_\varepsilon\ast\density_s)(t).
\end{equation*}
This curve is Hölder continuous in the Wasserstein-2 space over $\dom$.
Indeed, letting $L(\varepsilon)\simeq1/\varepsilon^2$ be the Lipschitz constant of $\mollifierTime_\varepsilon$ and abbreviating partitions of $\dom$ by $P$, for $a,b\in[0,T]$ we have
\begin{multline}\label{eqn:recoverySeqW1}
\mathbb{W}_2^2(\density_{\varepsilon,a},\density_{\varepsilon,b})
\lesssim\mathrm{diam}(\dom)^2\norm{\density_{\varepsilon,a}-\density_{\varepsilon,b}}
\simeq\sup_{P}\sum_{A\in P}\absNorm{\density_{\varepsilon, a}(A)-\density_{\varepsilon,b}(A)}\\
\le\sup_{P}\sum_{A\in P}\int_{-\varepsilon}^{T+\varepsilon}\absNorm{\mollifierTime_{\varepsilon}(a-s)-\mollifierTime_\varepsilon(b-s)}\density_s(A)\dint s
\lesssim\absNorm{a-b}\norm{\density}L(\varepsilon).
\end{multline}

\paragraph{Step 1b: Construct velocity fields to satisfy continuity equation.}
Now that we have obtained a time continuous curve, the next step is to construct associated velocities to satisfy the continuity equation. We use the construction from \cite[Thm.\,5.14]{OTAppliedMath} to prove the existence of a measure $\overline{\density}_\varepsilon^{k}$ and a function $\overline{v}^k_{\varepsilon}\in\mathrm{L}^2\left(\overline{\density}^k_{\varepsilon}\right)^3$ such that
\begin{equation*}
\partial_t\overline{\density}^k_\varepsilon+\mdiv(\overline{v}^k_\varepsilon\overline{\density}^k_\varepsilon)=0
\quad\text{(and, as seen in the subsequent step,}\quad
\normS{\overline{v}_{\varepsilon, t}^k}^2_{\mathrm{L}^2(\overline{\density}_{\varepsilon, t}^k)}\lesssim\tfrac{k}{\varepsilon^2}\text{).}
\end{equation*}
For the construction, let $k\in\mathbb{N}$. We set $\overline{\density}_{\varepsilon, Ti/k}^k=\mollifier_k\ast\density_{\varepsilon,Ti/k}$ for $i=0,\ldots,k$ with $\mollifier_k$ an even mollifier supported on $B_{1/k}(0)\subset\R^3$.
The support of $\overline{\density}_{\varepsilon,iT/k}^k$ is therefore contained in the compact and convex set $\dom_k\coloneqq\{ x\in\R^3\ | \ \dist(\dom,x)\le 1/k \}\subset\hat\dom\coloneqq D_1$. By absolute continuity of $\overline{\density}_{\varepsilon,iT/k}^k$ w.r.t.\ the Lebesgue measure, the Wasserstein-2 distance from $\overline{\density}_{\varepsilon,Ti/k}^k$ to $\overline{\density}_{\varepsilon,T(i+1)/k}^k$ induces optimal transport maps
\begin{align*}
T^{i,k}\colon\hat\dom\to\hat\dom
\end{align*}
for $i\in\{ 0,\ldots,k-1 \}$. For $t\in(\frac{Ti}{k},\frac{T(i+1)}{k})$ we interpolate $T^{i,k}_t\coloneqq\frac{1}{T}(((i+1)T-kt)\text{id}+(kt-iT)T^{i,k})$ and set
\begin{align*}
\overline{\density}_{\varepsilon,t}^k\coloneqq\pushforward{(T_t^{i,k})}{\overline{\density}_{\varepsilon,Ti/k}^k},
\end{align*}
which describes a linear particle motion between any position $x$ at time $Ti/k$ and position $T^{i,k}(x)$ at time $T(i+1)/k$. The associated velocity reads
\begin{align*}
\overline{v}_{\varepsilon}^{i,k}\coloneqq \frac{k}{T}(T^{i,k}-\text{id}).
\end{align*}
Since $T_t^{i,k}$ is injective for every $t\in(\frac{Ti}{k},\frac{T(i+1)}{k})$ \cite[Lemma 4.23]{OTAppliedMath} we finally define
\begin{align*}
\overline{v}^{k}_{\varepsilon,t}\coloneqq\overline{v}_{\varepsilon}^{i,k}\circ \left(T_t^{i,k}\right)^{-1}.
\end{align*}
This construction yields a pair of measures $(\overline{\density}_{\varepsilon}^k,\overline{v}^k_\varepsilon\overline{\density}_{\varepsilon}^k)\eqqcolon(\overline\density^k_\varepsilon,\overline\momentum^k_\varepsilon)$ satisfying the continuity equation. To see this, consider for $\psi\in C^1(\hat\dom)$ and $t\in\left( \frac{Ti}{k},\frac{T(i+1)}{k} \right)$
\begin{multline*}
\frac{\d }{\d t}\int_{\hat\dom}\psi(x)\dint\overline{\density}^k_{\varepsilon,t}(x)
=\frac{\d }{\d t}\int_{\hat\dom}\psi(T_t^{i,k}(x))\dint\overline{\density}^k_{\varepsilon,Ti/k}(x)\\
=\int_{\hat\dom}\dotProd{\nabla_x\psi(T_t^{i,k}(x))}{\overline{v}_{\varepsilon}^{i,k}(x)}\dint\overline{\density}^k_{\varepsilon,Ti/k}(x)
=\int_{\hat\dom}\dotProd{\nabla_x\psi(x)}{\overline{v}_{\varepsilon,t}^k(x)}\dint\overline{\density}^k_{\varepsilon,t}(x),
\end{multline*}
where the order of integration and differentiation may be changed due to the boundedness of $t\mapsto \dotProd{\nabla_x\psi(T_t^{i,k}(x))}{\overline{v}_{\varepsilon}^{i,k}(x)}$ and the finiteness of $\overline{\density}_{\varepsilon,Ti/k}^k$. This means that $(\overline{\density}_{\varepsilon}^k,\overline{v}^k_\varepsilon\overline{\density}_{\varepsilon}^k)$ satisfies the continuity equation in the weak sense which is equivalent to \eqref{eqn:defCE} \cite[Prop.\,4.2]{OTAppliedMath}.

\paragraph{Step 1c: Bounding the norm of the velocity.}
For $t\in(\frac{Ti}{k},\frac{T(i+1)}{k})$ we also get
\begin{multline}\label{EQRecoverySeqBoundV}
\normS{\overline{v}_{\varepsilon,t}^k}^2_{\mathrm{L}^2(\overline{\density}^k_{\varepsilon,t})}
=\int_{\hat\dom}\absNormS{\overline{v}^k_{\varepsilon,t}}^2\dint\overline{\density}_{\varepsilon,t}^k=\int_{\hat\dom}\absNormS{\overline{v}_\varepsilon^{i,k}}^2\circ\left( T_t^{i,k} \right)^{-1}\dint\pushforward{(T_t^{i,k})}{\overline{\density}_{\varepsilon,Ti/k}^k}
=\int_{\hat\dom}\absNormS{\overline{v}_{\varepsilon}^{i,k}}^2\dint\overline{\density}_{\varepsilon,Ti/k}^k\\
=\frac{k^2}{T^2}\mathbb{W}_2^2(\overline{\density}_{\varepsilon,Ti/k}^k,\overline{\density}_{\varepsilon,T(i+1)/k}^k)
\le\frac{k^2}{T^2}\mathbb{W}_2^2(\density_{\varepsilon,Ti/k},\density_{\varepsilon,T(i+1)/k})
\lesssim k\frac{1}{\varepsilon^2},
\end{multline}
where we used \cite[Lemma 5.2]{OTAppliedMath} and \eqref{eqn:recoverySeqW1} in the last two steps.

\paragraph{Step 1d: Getting the right support for the approximating measures.}
As a final adjustment we need to reduce the potentially too large support of $\overline{\density}_{\varepsilon}^k$ from $\timeInterval\times\hat\dom$ to $\spacetime$.
For our convex domain $\dom$ this is easiest via spatial rescaling.
Hence let $m\coloneqq\min_{x\in\partial\dom}\absNorm{x}>0$ (recall that we assumed $0\in\mathrm{int}(\dom)$) and set $c_k\coloneqq\frac{1}{1+1/(mk)}$, then $c_k\dom_k\subset\dom$:
Indeed, it suffices to show $c_k\partial\dom_k\subset\dom$, so let $x\in\partial\dom_k$ and let $z$ be its orthogonal projection onto $\dom$,
then $q=c_kx\in\dom$ since $q=(1-c_k)y+c_kz$ is a convex combination of the points $z$ and $y=z+\frac{q-z}{1-c_k}$ in $\dom$
(that $y\in\dom$ can be seen from $\absNorm{y}=\absNorm{\frac{c_k}{1-c_k}(x-z)}=mk\absNorm{x-z}\leq m$).
We then define the desired approximating measures via their time slices,
\begin{equation*}
\density_{\varepsilon,t}^{k}=\pushforward{(x\mapsto c_{k}x)}{(\overline{\density}_{\varepsilon,t}^{k})}\quad\text{and}\quad\momentum_{\varepsilon,t}^{k}=\pushforward{(x\mapsto c_{k}x)}{(c_k\overline{v}^{k}_{\varepsilon,t}\overline{\density}^{k}_{\varepsilon,t})}
\end{equation*}
for every $t$. Note, that the factor $c_k$ in $(c_k\overline{v}^{k}_{\varepsilon,t}\overline{\density}^{k}_{\varepsilon,t})$ is necessary in order for the final measures to satisfy the continity equation \cite[Prop.\,1.1.6]{chizat_unbalancedOT}.\\
Finally we set $\densityN{n}=\density_{\varepsilon_n}^{k_n}$ and $\momentumN{n}=\momentum_{\varepsilon_n}^{k_n}$ for sequences $\varepsilon_n\to 0$ and $k_n\to\infty$.
It remains to specify those sequences such that the constructed measures possess the desired properties.

\paragraph{Step 2: Computing a bound on $\BBEnergy(\densityN{n},\momentumN{n})$.}
The Radon--Nikodym derivative of $\momentumTN{t}{n}$ w.r.t.\ $\densityTN{t}{n}$ is readily found as $x\mapsto c_{k_n}\overline{v}^{k_n}_{\varepsilon_n,t}(x/c_{k_n})$.
Using this in \eqref{EQRecoverySeqBoundV} we get
\begin{equation*}
\BBEnergy(\densityN{n},\momentumN{n})=\int_0^T\int_\dom\left( \frac{\d \momentum_{n,t}}{\d \densityTN{t}{n}} \right)^2\d \densityTN{t}{n}\dint t
=c_{k_n}^2\int_0^T\int_{\hat\dom}|\overline{v}^{k_n}_{\varepsilon_n,t}|^2\d \overline{\density}^{k_n}_{\varepsilon_n,t}\dint t
\lesssim \frac{{k_n}}{\varepsilon_n^2}.
\end{equation*}


\paragraph{Step 3: Intermediate estimates of $\mathbb{W}_2(\densityN{n},\density)$.}
Next we show $\mathbb{W}_2(\densityN{n},\density)\to0$ (even at rate $\delta_n^{1/6}$ if desired), which is known to imply $\densityN{n}\convStar\density$.
The triangle inequality yields
\begin{equation*}
\mathbb{W}_2(\densityN{n},\density)
\leq\mathbb{W}_2(\densityN{n},\overline{\density}^{k_n}_{\varepsilon_n})+\mathbb{W}_2(\overline{\density}^{k_n}_{\varepsilon_n},\density_{\varepsilon_n})+\mathbb{W}_2(\density_{\varepsilon_n},\density).
\end{equation*}


\paragraph{Step 3a: Estimate of $\mathbb{W}_2(\densityN{n},\overline{\density}^{k_n}_{\varepsilon_n})$.}
Since $\densityTN{t}{n}=\density^{k_n}_{\varepsilon_n,t}$ is defined as a pushforward of $\overline{\density}^{k_n}_{\varepsilon_n,t}$,
it is straightforward to provide the transport plan $\transportPlan^{k_n}=\pushforward{(x\mapsto(x,c_{k_n}x))}{(\overline{\density}_{\varepsilon_n,t}^{k_n})}$ from the latter to the former.
With this we obtain
\begin{equation*}
\mathbb{W}_2(\densityTN{t}{n},\overline{\density}^{k_n}_{\varepsilon_n,t})^2
\leq\int_{\hat\dom\times\hat\dom}\absNorm{x-y}^2\dint\transportPlan^{k_n}(x,y)
=\int_{\hat\dom}\absNorm{x-c_{k_n}x}^2\dint\overline{\density}_{\varepsilon_n,t}^{k_n}
\lesssim\sup_{x\in\hat\dom}\absNorm{x-c_{k_n}x}^2
\lesssim\frac1{k_n^2},
\end{equation*}
which implies $\mathbb{W}_2(\densityN{n},\overline{\density}^{k_n}_{\varepsilon_n})\leq\int_0^T\mathbb{W}_2(\densityTN{t}{n},\overline{\density}^{k_n}_{\varepsilon_n,t})\dint t\lesssim1/k_n$.


\paragraph{Step 3b: Estimate of $\mathbb{W}_2(\overline{\density}^{k_n}_{\varepsilon_n},\density_{\varepsilon_n})$.}
Similar to \cite[proof of Thm.\,5.14]{OTAppliedMath} we get
\begin{equation*}\label{eqn:BBEnergyProofWassersteinEstimate}
\mathbb{W}_2(\overline{\density}_{\varepsilon_n,t}^{k_n},\density_{\varepsilon_n,t})\le \mathbb{W}_2(\overline{\density}_{\varepsilon_n,t}^{k_n},\overline{\density}_{\varepsilon_n,Ti/k_n}^{k_n})+\mathbb{W}_2(\overline{\density}_{\varepsilon_n,Ti/k_n}^{k_n},\density_{\varepsilon_n,Ti/{k_n}})+\mathbb{W}_2(\density_{\varepsilon_n,Ti/{k_n}},\density_{\varepsilon_n,t})
\end{equation*}
for $t\in(\frac{Ti}{{k_n}},\frac{T(i+1)}{{k_n}})$.
The construction $\overline{\density}^{k_n}_{\varepsilon_n,t}=\pushforward{(T_t^{i,{k_n}})}{\overline{\density}^{k_n}_{\varepsilon_n,Ti/{k_n}}}$ as a pushforward yields
\begin{multline*}
\mathbb{W}^2_2(\overline{\density}_{\varepsilon_n,t}^{k_n},\overline{\density}_{\varepsilon_n,Ti/k_n}^{k_n})
\le\int_{\hat\dom}\absNormS{T_t^{i,{k_n}}(x)-x}^2\dint\overline{\density}_{\varepsilon_n,Ti/k_n}^{k_n}\\
\le\int_{\hat\dom}\absNormS{T^{i,{k_n}}(x)-x}^2\dint\overline{\density}_{\varepsilon_n,Ti/k_n}^{k_n}
=\frac{T^2}{{k_n}^2}\normS{\overline{v}_{\varepsilon_n}^{i,{k_n}}}^2_{\mathrm{L}^2(\overline{\density}^{k_n}_{\varepsilon_n,Ti/k_n})}
\lesssim\frac{1} {{k_n}\varepsilon_n^2},
\end{multline*}
where we used \eqref{EQRecoverySeqBoundV} in the last step. For the second summand we modify the proof of \cite[Lemma 5.2]{OTAppliedMath}.
With the mollifiers $\mollifier_{k_n}$ from above we define a transport plan $\transportPlan^{k_n}$ from $\density_{\varepsilon_n,Ti/{k_n}}$ to $\overline{\density}_{\varepsilon_n,Ti/{k_n}}^{k_n}$ by
\begin{equation*}
\int_{\hat\dom\times\hat\dom}\varphi\dint\transportPlan^{k_n}
\coloneqq\int_{\dom}\int_{B_{1/{k_n}}(0)}\varphi(x,x+z)\mollifier_{k_n}(z)\dint z\dint\density_{\varepsilon_n,Ti/{k_n}}(x)
\qquad\text{for all }\varphi\in C(\hat\dom\times\hat\dom).
\end{equation*}
This yields the estimate
\begin{equation*}
\mathbb{W}^2_2(\overline{\density}_{\varepsilon_n,Ti/{k_n}}^{k_n},\density_{\varepsilon_n,Ti/{k_n}})
\le\int_{\hat\dom\times\hat\dom}\absNorm{x-y}^2\dint\transportPlan^{k_n}(x,y)
=\int_{\dom}\int_{B_{1/{k_n}}(0)}\absNorm{z}^2\mollifier_{k_n}(z)\dint z\dint \density_{\varepsilon_n,Ti/{k_n}}
\lesssim\frac{1}{k_n^2}.
\end{equation*}
The last summand can be estimated via \eqref{eqn:recoverySeqW1} by $\frac{1}{\sqrt{{k_n}}\varepsilon_n}$. Overall, assuming $1\lesssim k_n\varepsilon_n^2$ (to be ensured later), we obtain
\begin{align*}
\mathbb{W}_2(\overline{\density}_{\varepsilon_n,t}^{k_n},\density_{\varepsilon_n,t})\lesssim\frac{1}{\sqrt{{k_n}}\varepsilon_n}
\end{align*}
for every $t\in[0,T]$ and thus the same estimate for $\mathbb{W}_2(\overline{\density}^{k_n}_{\varepsilon_n},\density_{\varepsilon_n})$.

\paragraph{Step 3c: Estimate of $\mathbb{W}_2(\density_{\varepsilon_n},\density)$.}
Since $\density_{\varepsilon_n}$ derives from $\density$ via a convolution, we can readily construct a transport plan $\transportPlan^{\varepsilon_n}$ from the latter to the former
(using the mollifier $\mollifierTime_{\varepsilon_n}$ and the time reflection $R$ from above),
\begin{equation*}
\int_{\spacetime\times\spacetime}\varphi\dint\transportPlan^{\varepsilon_n}
\coloneqq\int_{\spacetime}\int_{-\varepsilon_n}^{\varepsilon_n}\mollifierTime_{\varepsilon_n}(s)\varphi(t,x,R(t+s),x)\dint s\dint\densityT{t}(x)\dint t.
\end{equation*}
Indeed, the marginals of $\transportPlan^{\varepsilon_n}$ are $\density$ and $\density_{\varepsilon_n}$, where the latter can be seen from
\begin{multline*}
\int_{\spacetime\times\spacetime}\phi(s,y)\dint\transportPlan^{\varepsilon_n}(t,x,s,y)
=\int_{\spacetime}\int_{-\varepsilon_n}^{\varepsilon_n}\mollifierTime_{\varepsilon_n}(s)\phi(R(t+s),x)\dint s\dint\densityT{t}(x)\dint t\\
=\int_{\spacetime}\int_{-\varepsilon_n}^{\varepsilon_n}\mollifierTime_{\varepsilon_n}(s)\phi(t,x)\dint s\dint\densityT{R(t-s)}(x)\dint t
=\int_{\spacetime}\phi(t,x)\dint\density_{\varepsilon_n, t}(x)\dint t
\end{multline*}
(the second equality exploits the evenness of $\mollifierTime_{\varepsilon_n}$ and $\int_0^Tg(R(t-s))f(t)\dint t+\int_0^Tg(R(t+s))f(t)\dint t=\int_0^Tg(t)f(R(t-s))\dint t+\int_0^Tg(t)f(R(t+s))\dint t$ for any functions $f,g$,
which can readily be checked by splitting $[0,T]$ into the subintervals $[0,s]$, $[s,T-s]$, $[T-s,T]$).
It follows
\begin{multline*}
\mathbb{W}^2_2(\density_{\varepsilon_n},\density)
\leq\int_{\spacetime\times\spacetime}\absNorm{(t,x)-(s,y)}^2\dint\transportPlan^{\varepsilon_n}(t,x,s,y)\\
\leq\int_{\spacetime}\int_{-\varepsilon_n}^{\varepsilon_n}\mollifierTime_{\varepsilon_n}(s)\absNorm{s}^2\dint s\dint\densityT{t}(x)\dint t
\leq\norm{\density}\varepsilon_n^2.
\end{multline*}

\paragraph{Step 3d: Convergence rate for $\mathbb{W}_2(\densityN{n},\density)$.}
Combining the previous steps we obtain $\mathbb{W}_2(\densityN{n},\density)\lesssim\frac{1}{\sqrt{k_n}\varepsilon_n}+\varepsilon_n$,
which converges to zero as long as $1/k_n\in o(\varepsilon_n^2)$.
%
%
Since from $\BBEnergy(\densityN{n},\momentumN{n})\lesssim k_n/\varepsilon_n^2$ we require $\varepsilon_n\gtrsim\sqrt{k_n\delta_n}$ to guarantee $\BBEnergy(\densityN{n},\momentumN{n})\leq1/\delta_n$,
the optimal parameter choice can be found as $\varepsilon_n\simeq\sqrt{k_n\delta_n}$ and $k_n=\ceil{\delta_n^{-2/3}}$, which yields the rate $\mathbb{W}_2(\densityN{n},\density)\lesssim\delta_n^{1/6}$.

\notinclude{
Next, we can adapt the above proof of the weak-* convergence to deduce the $\mathrm{L}^2$ convergence. With the same splitting and with $f^c$ being a Lipschitz continuation of $f$ with the same Lipschitz constant we get
\begin{align*}
&\int_0^T\int_{\dom_{\delta'}}\absNorm{\int_\dom f(x,y)\dint\densityTN{t}{n}(y)-\int_\dom f(x,y)\dint\densityT{t}(y)}^2\dint x\d t\\
\lesssim &\int_0^T\int_{\dom_{\delta'}}\absNorm{\int_\dom f(x,c_{k_n}y)\dint\overline{\density}^{k_n}_{\varepsilon_n,t}(y)-\int_\dom f^c(x,y)\dint\overline{\density}^{k_n}_{\varepsilon_n,t}(y)}^2\dint x\d t\\
&+\int_0^T\int_{\dom_{\delta'}}\absNorm{\int_\dom f^c(x,y)\dint\overline{\density}^{k_n}_{\varepsilon_n,t}(y)-\int_\dom f(x,y)\dint\density_{\varepsilon_n,t}(y)}^2\dint x\d t\\
&+\int_0^T\int_{\dom_{\delta'}}\absNorm{\int_\dom f(x,y)\dint\density_{\varepsilon_n,t}(y)-\int_\dom f(x,y)\dint\density_{t}(y)}^2\dint x\d t\eqqcolon (i')+(ii')+(iii').
\end{align*}
By Lipschitz continuity and finiteness of the measure space we get $(i')\lesssim\left(\frac{1}{k_n}\right)^2$. Using the time uniform estimate for $\mathbb{W}_2(\overline{\density}^{k_n}_{\varepsilon_n,t},\density_{\varepsilon_n,t})$ it is $(ii')\lesssim \mathbb{W}^2_2(\overline{\density}^{k_n}_{\varepsilon_n,t},\density_{\varepsilon_n,t})\lesssim\delta_n^{\frac{1}{3}}$. Finally, we find (we continue $\densityT{t}$ by zero outside of $\timeInterval$) by $\mathrm{L}^p$ convergence of mollified functions
\begin{align*}
(iii')&\lesssim\varepsilon_n+\int_{\varepsilon_n}^{T-\varepsilon_n}\int_{\dom_{\delta'}}\absNorm{\int_0^T\int_\dom f(x,y)\dint\density_{s}(y)\mollifierTime_{\varepsilon_n}(t-s)\dint s-\int_\dom f(x,y)\dint\density_{t}(y)}^2\dint x\d t\\
&\lesssim\varepsilon_n+\int_{\dom_{\delta'}}\int_\R\absNorm{\int_\R\int_\dom f(x,y)\dint\density_{t-s}(y)\mollifierTime_{\varepsilon_n}(s)\dint s-\int_\dom f(x,y)\dint\density_{t}(y)}^2\dint t\d x\convN 0.
\end{align*}
\\
\\
}

\paragraph{Step 4: Convergence of $\mathbb W_2(\densityTN{t_n}{n},\densityT{t})$.}
By the triangle inequality we have
\begin{multline*}
\mathbb W_2(\densityTN{t_n}{n},\densityT{t})
\leq\mathbb W_2(\densityTN{t_n}{n},\overline{\density}^{k_n}_{\varepsilon_n,t_n})
+\mathbb W_2(\overline{\density}^{k_n}_{\varepsilon_n,t_n},\density_{\varepsilon_n,t_n})
+\mathbb W_2(\density_{\varepsilon_n,t_n},\density_{\varepsilon_n,t})
+\mathbb W_2(\density_{\varepsilon_n,t},\densityT{t})\\
\lesssim\frac1{k_n}+\frac1{\sqrt{k_n}\varepsilon_n}+\frac{\sqrt{\Delta T_n}}{\varepsilon_n}
+\mathbb W_2(\density_{\varepsilon_n,t},\densityT{t}),
\end{multline*}
where the estimates of the first three summands stem from steps 3a and 3b as well as \eqref{eqn:recoverySeqW1}.
Hence, choosing for instance $k_n=\ceil{\delta_n^{-2/3}}$ and $\varepsilon_n=\max\{\Delta T_n^{1/4},\sqrt{k_n\delta_n}\}$
(for which from step 3 we have $\densityN{n}\convStar\density$ and $\BBEnergy(\densityN{n},\momentumN{n})\leq1/\delta_n$), the first three summands vanish in the limit,
and it remains to show $\mathbb W_2(\density_{\varepsilon_n,t},\densityT{t})\to0$ along a subsequence for almost every $t$.
Since $\mathbb W_2$ metrizes weak-* convergence, it actually suffices to show, along a subsequence and for almost every $t$, $\density_{\varepsilon_n,t}\convStar\densityT{t}$ as $n\to\infty$
or equivalently $\int_{\dom}\varphi\dint(\density_{\varepsilon_n,t}-\densityT{t})\to0$ for all $\varphi$ from a countable dense subset $\mathfrak C\subset C(\dom)$.
Now let $\mathfrak C=\{\varphi_1,\varphi_2,\ldots\}$ and define, for $i\in\N$,
\begin{equation*}
g_i\in\mathrm{L}^\infty(\R)\cap\mathrm{L}^1(\R),\qquad
g_i(t)=\int_{\dom}\varphi_i\dint\densityT{t}
\quad\text{ for }t\in[-T,2T]\text{ and }g_i(t)=0\text{ else}
\end{equation*}
(recall that $\densityT{t}$ was extended to $t\notin[0,T]$ by time reflection).
Then $\int_{\dom}\varphi_i\dint\density_{\varepsilon_n,t}=g_i*\mollifierTime_{\varepsilon_n}(t)$ by definition of $\density_{\varepsilon_n}$. We will inductively construct a Lebesgue-nullset $N\subset[0,T]$ and a subsequence of $(\varepsilon_n)_n$
such that along this subsequence, for all $i\in\N$ we have $g_i*\mollifierTime_{\varepsilon_n}\convN g_i$ pointwise on $[0,T]\setminus N$:
We start with the full sequence.
In the $i$th step we keep the first $i$ elements of the subsequence from the previous step and reduce its remainder to a subsequence
along which $g_i*\mollifierTime_{\varepsilon_n}\to g_i$ pointwise on $[0,T]\setminus N_i$ for some nullset $N_i$.
This is possible since $g_i*\mollifierTime_{\varepsilon_n}\to g_i$ in $\mathrm{L}^1(\R)$.
Finally we set $N=\bigcup_{i\in\N}N_i$, which as a countable union of Lebesgue-nullsets is again a nullset.
By construction of the final subsequence, we have $g_i*\mollifierTime_{\varepsilon_n}\to g_i$ pointwise on $[0,T]\setminus N$ for all $i$,
thus $\int_{\dom}\varphi\dint(\density_{\varepsilon_n,t}-\densityT{t})\to0$ for all $\varphi\in\mathfrak C$ and almost all $t\in[0,T]$, as desired.
\end{proof}

\begin{remark}[H\"older versus absolute continuity]
Our construction in the previous proof would have simplified if we could have assumed $t\mapsto\density_{\varepsilon,t}$ to be absolutely instead of H\"older continuous
(i.e.\ $\mathbb W_2(\density_{\varepsilon,t},\density_{\varepsilon,s})\leq\int_t^sg(r)\dint r$ for some $\mathrm{L}^1$-function $g$).
Indeed, then an appropriate momentum would already have been provided by \cite[Thm.\,5.14]{OTAppliedMath}.
However, in general we cannot expect absolute continuity as the following example shows:
Consider $\density\in\measp([0,2]\times\dom)$ with $\densityT{t}=\delta_0$ for $t\leq1$ and $\densityT{t}=\delta_x$ else for some $0\neq x\in\dom$.
Then the support of $\density_{\varepsilon}$ is contained in $[0,2]\times\{0,x\}$,
but there cannot be any momentum $\momentum_\varepsilon$ absolutely continuous w.r.t.\ $\density_\varepsilon$ (in particular with same support)
satisfying the continuity equation \eqref{eqn:defCE}.
By \cite[Thm.\,5.14]{OTAppliedMath} this excludes absolute continuity of $t\mapsto\density_{\varepsilon, t}$.
\end{remark}

\begin{remark}[Application of \cref{thm:RecoverySequence}\ref{enm:RecoverySequenceB}]\label{rmk:SupConvergenceLipschitzFunctions}
A direct consequence of \cref{thm:RecoverySequence}\ref{enm:RecoverySequenceB} is that along the subsequence we have
\begin{equation*}
\sup_{x\in X}\absNorm{\int_\dom f(x,y)\dint\densityTN{t_n}{n}(y)-\int_\dom f(x,y)\dint\densityT{t}(y)}\convN 0
\end{equation*}
for almost every $t\in[0,T]$ and any Lipschitz function $f:X\times\dom\to\R$ with $X$ a metric space.
Indeed, denoting by $\mathrm{Lip}(h)$ the Lipschitz constant of a function $h$, by the Kantorovich--Rubinstein formula for $\mathbb W_1$ the supremum is bounded above by
$\sup_{\Phi:\dom\to\R,\,\mathrm{Lip}(\Phi)\leq\mathrm{Lip}(f)}\int_\dom\Phi\dint(\densityTN{t_n}{n}-\densityT{t})
=\mathrm{Lip}(f)\mathbb W_1(\densityTN{t_n}{n},\densityT{t})
\lesssim\mathbb W_2(\densityTN{t_n}{n},\densityT{t})$,
which converges to zero.
\end{remark}

\section{PET forward operator}\label{sec:ForwardOperator}
Here we briefly recapitulate the model for PET measurements from \cite{Schmitzer_DynamicCellImaging} and state some of its properties.
Recall that the radionuclide distribution $\density^\dagger=\d t\otimes\densityT{t}^\dagger$, the ground truth tracer distribution that we seek to approximately recover from the PET measurements, is located in some compact and convex domain $\dom\subset\R^3$. As in \cite{Schmitzer_DynamicCellImaging,PET_Base} we assume that the measurement times are much smaller than the radionuclide's halflife so that the tracer mass stays constant over the measurement time and hence we assume $\density^\dagger_t(D)$ to be independent of the time point $t$. The emitted photons are detected at the boundary of a convex set $\domDelta\supset\dom$ with $\mathrm{dist}(\dom,\partial\domDelta)\ge\delta$ and we will assume $\partial\domDelta$ to be smooth.
\begin{remark}[Generalization to piecewise smooth boundary]
  The results generalize to $\partial\domDelta$ having piecewise smooth boundary by generalizing \cref{thm:scatterlessDetection} below, assuming that the detectors are contained in exactly one smooth component of $\partial\domDelta$, and making obvious adjustments in the proofs relying on \cref{thm:scatterlessDetection}.
\end{remark}
A measurement $E_q$ consists of a list of photon pair detection times and locations.
It is a realization of the Poisson point process $\boldsymbol E_q=\Poi{q\d t\otimes \forwardOp\densityT{t}^\dagger}$.
The linear forward operator $A\colon\measp(\dom)\to\measp(\boundDomDeltaSq)$ describes transformation of radioactive decays into photon detections.
It is a weighted sum
\begin{equation*}
A=p^\att A^\att+p^\sct \forwardOp^\sct+p^\dt \forwardOp^\dt,
\end{equation*}
where the superscripts stand for
\textbf attenuation (the emitted photon pair is not detected, for instance due to absorption),
\textbf scattering (at least one of the photons is deflected), and
\textbf detection (the unscattered photons are registered by a pair of detectors).
The parameters $p^\sct,p^\att,p^\dt=1-p^\sct-p^\att\in[0,1]$ denote the corresponding probabilities.
%
The forward operator $A^\att$ simply discards all radioactive decays, 
\begin{equation*}
A^\att\colon\measp(\dom)\to\measp(\boundDomDeltaSq),\quad\densityT{t}^\dagger\mapsto0.
\end{equation*}
%
Random scattering is assumed in \cite{Schmitzer_DynamicCellImaging} to lead to a homogeneous coincidence probability on $\boundDomDeltaSq$,
\begin{equation*}
\forwardOp^\sct\colon \measp(\dom) \to \measp(\boundDomDeltaSq),\quad
\densityT{t}^\dagger \mapsto \frac{\densityT{t}^\dagger(\dom)}{\hd^2(\partial\domDelta)^2}
\cdot
(\HausdorffMeasSq)\restr(\boundDomDeltaSq),
\end{equation*}
but our analysis would also apply for alternative, spatially inhomogeneous scattering operators $\forwardOp^\sct$ (as long as $\forwardOp^\sct\density\gtrsim\norm{\density}$).
%
The forward operator of scatterless detection is the composition
\begin{equation*}
\forwardOp^\dt=B_{\mathrm{detectors}} B_{\mathrm{lines}} B_{\mathrm{pr}}.
\end{equation*}
Here, $B_{\mathrm{pr}}$ models the so-called \emph{positron range}:
The photon pair is emitted near, but not exactly at the location of the radioactive decay.
Therefore, the intensity $B_{\mathrm{pr}}\densityT{t}^\dagger$ of photon emissions
equals the convolution of the radioactive decay intensity $\densityT{t}^\dagger$ with the (Lipschitz) probability density $G:B_{\delta/2}(0)\to[0,\infty)$ of the location difference,
\begin{equation*}
B_{\mathrm{pr}}\colon\measp(\dom)\to\measp(\domDeltaHalf),\;
\densityT{t}^\dagger\mapsto G*\densityT{t}^\dagger
\qquad\text{for }\domDeltaHalf=\dom+B_{\delta/2}(0).
\end{equation*}
%
%
%
The operator $B_{\mathrm{lines}}$ maps the spatial intensity of photon pair emission
to the intensity of photon pair emission locations and directions
(a location-direction pair $(x,v)$ represents a photon pair emitted at $x\in\domDeltaHalf$ in directions $v\in S^2=\{x\in\R^3 \ | \ \absNorm{x}=1\}$ and $-v$),
\begin{equation*}
B_{\mathrm{lines}}\colon\measp(\domDeltaHalf)\to\measp(\domDeltaHalf\times S^2),\;
\mu\mapsto\mu\otimes\vol_{S^2}
\end{equation*}
with $\vol_{S^2}$ the uniform probability measure on the sphere.
Finally, each unscattered photon pair $(x,v)\in\domDeltaHalf\times S^2$ will be detected at the positions $R(x,v)$ with
\begin{equation}\label{eq:RFunctionForwardOperator}
R\colon\domDeltaHalf\times S^2\to\boundDomDeltaSq,\;
R(x,v)=\partial\domDelta\cap(x+\R v)
\end{equation}
(for simplicity, two-element subsets of $\partial\domDelta$ are identified with points in $\boundDomDeltaSq$).
This leads to
\begin{equation*}
B_{\mathrm{detectors}}\colon\measp(\domDeltaHalf\times S^2)\to\measp(\boundDomDeltaSq),\;
\mu\mapsto\pushforward{R}{\mu}.
\end{equation*}

For $\density=\d t\otimes\densityT{t}\in\measp(\spacetime)$ and $j\in\{\att,\sct,\dt\}$ or $j$ empty, we will denote by $\forwardOp^j\density$ the measure
\begin{equation*}
\forwardOp^j{\density}\in\measp([0,T]\times(\partial\domDelta)^2),\quad
\forwardOp^j{\density}(\tau\times\Gamma)=\int_\tau \forwardOp^j{\densityT{t}}(\Gamma)\dint t.
\end{equation*}
With a slight misuse of notation, we will use the same expression for the Radon--Nikodym derivative w.r.t.\ $\nu=\dt t\otimes(\hd^2\restr\partial\domDelta)\otimes(\hd^2\restr\partial\domDelta)$ (which will exist by \cref{thm:scatterlessDetection}\ref{enm:ForwardDensitySplitting}).

Given a ground truth radionuclide distribution $\density^\dagger\in\measp(\spacetime)$ and a PET measurement $E_q$ drawn from the random variable $\Poi{q\forwardOp\density^\dagger}$ (this means that on average we have $\normS{E_q}\simeq q$ detected events),
\cite{Schmitzer_DynamicCellImaging} proposes to reconstruct $\density^\dagger$ by minimizing
\begin{equation}\label{eqn:minimizationFunctionalDiscrete}
J^{E_q}(\density,\momentum) = \norm{A\density}-\frac{1}{q}\int\log\left(\forwardBinnedP{\density}\right)\d E_q+\beta\BBEnergy(\density,\momentum)
\end{equation}
where the radioactive intensity $q$ corresponds to the inverse halflife of the considered radionuclide and the discrete forward operator
\begin{equation}\label{eq:DefDiscreteForwardOperator}
\forwardBinnedP{\density}=\sum_{i=1}^N\sum_{j,k=1}^M\frac{\forwardP{\density}(\tau^i\times\Gamma^j\times\Gamma^k)}{\mathcal{L}^1(\tau^i)\hd^2(\Gamma^j)\hd^2(\Gamma^k)}\mathds{1}_{\tau^i\times\Gamma^j\times\Gamma^k}, \quad \forwardOpP=\UF p^\sct \forwardOp^\sct+p^\dt\forwardOpD.
\end{equation}
Above, $\mathds{1}_S$ denotes the characteristic function of a set $S$, $\Gamma^j\subset\partial\domDelta$, $j=1,\ldots,M$, denote the discrete photon detectors,
and $\tau^i\subset[0,T]$, $i=1,\ldots,N$, denote disjoint time intervals
(i.e.\ the operator models that the measurement can only be trusted up to the resolution of the detectors and the time intervals into which detected photon pairs are typically binned).
In \cite{Schmitzer_DynamicCellImaging,PET_Base} this reconstruction is derived as a maximum a posteriori estimate with some additional unbiasing procedure that results in the extra factor $\UF>0$. We also allow time intervals and detector pairs to depend on an index $n$ which will be indicated by $B_n^u$.

We next list some properties of the forward operator that were derived in \cite{PET_Base}.
One of these is the relation of the detection part $\forwardOp^\dt$ to the X-ray transform
\begin{multline*}\label{eq:XRayTrafo}
P:L^1(\domDeltaHalf)\to L^1(\mathcal C),
\quad
Pf(\theta,s)=\int_{\{r\in\R\,|\,s+r\theta\in\domDeltaHalf\}} f(s+r\theta)\dint\mathcal L(r),\\
\text{where }
\mathcal C=\{(\theta,s)\in S^2\times\R^3\,|\,s\in\pi_{\theta^\perp}(\domDeltaHalf)\}.
\end{multline*}
Here, $\theta^\perp=\{s\in\R^3\,|\,s\cdot\theta=0\}$ denotes the orthogonal complement of $\theta\in S^2$
and $\pi_{\theta^\perp}:\R^3\to\theta^\perp$ the orthogonal projection onto $\theta^\perp$.
Note that the X-ray transform satisfies the symmetry $Pf(\theta,s)=Pf(-\theta,s)$. 
On $\mathcal C$ we will use the Borel measure $\hd^2\otimes\mathcal L^2$, defined by dual pairing with any continuous function $f:\mathcal C\to\R$ as
\begin{equation*}
\int_{\mathcal C}f\dint(\hd^2\otimes\mathcal L^2)
=\int_{S^2}\int_{\theta^\perp}f(\theta,s)\dint\mathcal L^2(s)\dint\hd^2(\theta).
\end{equation*}
Also, for $(a,b)\in\boundDomDeltaSq$ we will abbreviate
\begin{equation*}
\theta(a,b)=\tfrac{b-a}{|b-a|}\in S^2,\qquad
s(a,b)=\pi_{\theta(a,b)^\perp}(a).
\end{equation*}
Finally, since the convolution $G\ast\lambda$ of some $\lambda\in\measp(\dom)$ with the continuous positron range kernel $G$ is absolutely continous with respect to $\mathcal L^3$,
we will identify it with its $\mathcal L^3$-density and write $P[G\ast\lambda]$.

\begin{lemma}[Scatterless detection, {\cite[Lemma 4.1-4.3]{PET_Base}}]\label{thm:scatterlessDetection}
For any $\lambda\in\measp(\dom)$ we have
\begin{enumerate}[label=(\alph*)]
\item\label{enm:ForwardDensitySplitting}
$\RNderivative{\forwardOp^\dt\lambda}{(\hd^2\restr\partial\domDelta)\otimes(\hd^2\restr\partial\domDelta)}(a,b)
=g(a,b)P[G\ast\lambda](\theta(a,b),s(a,b))$
for some bounded, smooth function $g:\boundDomDeltaSq\to(0,\infty)$,
\item\label{enm:BoundednessForwardOp}
$\RNderivative{\forwardOp^\dt\lambda}{(\hd^2\restr\partial\domDelta)\otimes(\hd^2\restr\partial\domDelta)}
\leq C\norm\lambda$ for some $C>0$ independent of $\lambda$.
\end{enumerate}
As a consequence of \ref{enm:BoundednessForwardOp}, for $\UF>0$ there exists $C>0$ such that for any $\density=\dt t\otimes\densityT{t}\in\measp(\spacetime)$ with $\densityT{t}(\dom)=\frac1T\norm{\density}$ for a.e.\ $t\in[0,T]$ we have
\begin{equation*}
\tfrac1C\norm\density
\leq\RNderivative{\forwardP{\density}}{\nu}
\leq C\norm\density
\qquad\text{and}\qquad
\tfrac1C\norm\density
\leq\RNderivative{\forwardBinnedP{\density}}{\nu}
\leq C\norm\density.
\end{equation*}
\end{lemma}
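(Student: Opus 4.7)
Since parts \ref{enm:ForwardDensitySplitting} and \ref{enm:BoundednessForwardOp} are imported verbatim from \cite{PET_Base}, only the ``as a consequence'' part requires argument. The plan is to exploit the additive decomposition $\forwardP{\density} = \UF p^\sct \forwardOp^\sct\density + p^\dt \forwardOp^\dt\density$ and to bound each summand's density with respect to $\nu = \dt t \otimes (\hd^2\restr\partial\domDelta) \otimes (\hd^2\restr\partial\domDelta)$ separately, noting that the scattering term alone provides the lower bound while the detection term only enters the upper bound through \ref{enm:BoundednessForwardOp}.

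First I would compute the scattering contribution explicitly: by definition $\forwardOp^\sct \densityT{t} = \frac{\densityT{t}(\dom)}{\hd^2(\partial\domDelta)^2}\cdot(\hd^2\otimes\hd^2)\restr\boundDomDeltaSq$, so integrating against $\dt t$ and using the assumption $\densityT{t}(\dom) = \frac{1}{T}\normS{\density}$ a.e.\ gives
\begin{equation*}
\RNderivative{\forwardOp^\sct\density}{\nu}(t,a,b) = \frac{\normS{\density}}{T\,\hd^2(\partial\domDelta)^2}
\quad\nu\text{-a.e.},
\end{equation*}
a strictly positive constant multiple of $\normS{\density}$. Second, \ref{enm:BoundednessForwardOp} applied to each time slice $\densityT{t}$ yields $\RNderivative{\forwardOp^\dt\density}{\nu} \leq C\normS{\densityT{t}} = \frac{C}{T}\normS{\density}$ a.e. Since $\forwardOp^\dt\density$ is a nonnegative measure, combining the two estimates gives the two-sided bound on $\RNderivative{\forwardP{\density}}{\nu}$: the lower bound is inherited from the scattering term (using $\UF p^\sct > 0$), while the upper bound is $(\UF p^\sct \frac{1}{T\hd^2(\partial\domDelta)^2} + p^\dt \frac{C}{T})\normS{\density}$.

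For the discrete operator, observe that on each bin $\tau^i \times \Gamma^j \times \Gamma^k$ the density of $\forwardBinnedP{\density}$ with respect to $\nu$ equals
\begin{equation*}
\frac{\forwardP{\density}(\tau^i\times\Gamma^j\times\Gamma^k)}{\mathcal{L}^1(\tau^i)\hd^2(\Gamma^j)\hd^2(\Gamma^k)}
= \fint_{\tau^i\times\Gamma^j\times\Gamma^k} \RNderivative{\forwardP{\density}}{\nu}\dint\nu,
\end{equation*}
i.e.\ the local $\nu$-average of $\RNderivative{\forwardP{\density}}{\nu}$ over the bin. Since averaging preserves $L^\infty$ bounds, the same two-sided estimate transfers to $\RNderivative{\forwardBinnedP{\density}}{\nu}$ on the union of bins (outside of this union both densities vanish and the bounds should be read on the effective support, consistent with how $\forwardBinnedP{\density}$ is used in \eqref{eqn:minimizationFunctionalDiscrete}).

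I do not anticipate any real obstacle here; this is a bookkeeping step combining a closed-form computation for $\forwardOp^\sct$ with the pointwise bound in \ref{enm:BoundednessForwardOp}. The only subtlety worth flagging is that the lower bound is uniform in $(t,a,b)$ only because $\forwardOp^\sct$ smears mass uniformly over $\boundDomDeltaSq$; without the scattering contribution one could not hope for a pointwise lower bound since the detection part vanishes on detector pairs whose connecting line does not intersect $\domDeltaHalf$.
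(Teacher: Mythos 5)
Your argument is correct, and it is exactly the bookkeeping one expects: the lower bound is produced entirely by the scatter channel, the upper bound combines the explicit scatter density with part~\ref{enm:BoundednessForwardOp} applied slicewise, and the transfer to $\forwardBinnedP{\density}$ follows because the binned density is a local $\nu$-average of $\RNderivative{\forwardP{\density}}{\nu}$, and averaging respects two-sided $L^\infty$ bounds. Note that the paper itself does not prove this lemma --- it is imported wholesale (including the ``as a consequence'' part) from \cite[Lem.\,4.1--4.3]{PET_Base} --- so there is no in-text proof to compare against, but your derivation matches what the referenced proof has to do and fills in the step cleanly.

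Two small remarks. First, the lower bound hinges on $p^\sct>0$ (otherwise the scatter density vanishes and, as you correctly flag, no uniform pointwise lower bound can hold since $\forwardOp^\dt\densityT{t}$ has compact effective support in $\boundDomDeltaSq$); this is a standing assumption in the paper (cf.\ the hypotheses $p^\dt,p^\sct>0$ in \cref{TheoremMinExistence} and the remark that any replacement $\forwardOp^\sct$ must satisfy $\forwardOp^\sct\density\gtrsim\norm\density$), so it is fine to use, but it deserves to be stated explicitly. Second, the identity $\RNderivative{\forwardOp^\sct\density}{\nu}=\normS{\density}/(T\hd^2(\partial\domDelta)^2)$ and the slicewise bound $\RNderivative{\forwardOp^\dt\density}{\nu}\le\tfrac CT\normS\density$ both rely on the disintegration $\forwardOp^{j}\density=\dt t\otimes\forwardOp^{j}\densityT t$, which the paper sets up just after the definitions of $\forwardOp^\att,\forwardOp^\sct,\forwardOp^\dt$; it is worth a one-line pointer to that so the reader sees why the time variable separates. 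With these minor additions your proof is complete.
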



\begin{lemma}[Continuity of forward operator]\label{thm:ContinuityForwardOp}
Let $\density=\d t\otimes\densityT{t},\densityN{n}=\d t\otimes\densityTN{t}{n}\in\measp(\spacetime)$ such that $\densityN{n}\weakstarto\density$, then $\forwardOpP\densityN{n}\weakstarto \forwardOpP\density$.
For any interval $\tau\subset[0,T]$ and any measurable subsets $\Gamma^1,\Gamma^2\subset\partial\domDelta$ it also holds $\forwardOpP\densityN{n}(\tau\times\Gamma^1\times\Gamma^2)\to \forwardOpP\density(\tau\times\Gamma^1\times\Gamma^2)$.
The same statements hold with $\forwardOpP{}$ replaced by $\forwardBinnedP{}$.
\end{lemma}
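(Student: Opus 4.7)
The plan is to push all regularity onto the integrand by rewriting integrals against $\forwardOpP \densityN{n}$ as integrals against $\densityN{n}$ with a continuous integrand, so that weak-* convergence of $\densityN{n}$ applies directly. For any $\phi \in C(\spacetimedet)$, a Fubini rearrangement gives
\begin{equation*}
\int \phi \dint(\forwardOp^\sct \densityN{n}) = \int_{\spacetime} \Psi(t,x) \dint \densityN{n}(t,x), \quad \Psi(t,x) = \frac{1}{\hd^2(\partial\domDelta)^2} \int_{\boundDomDeltaSq} \phi(t,a,b) \dint(\hd^2 \otimes \hd^2),
\end{equation*}
and, using $\forwardOp^\dt \densityT{t} = R_\#((G \ast \densityT{t}) \otimes \volS)$ together with Fubini,
\begin{equation*}
\int \phi \dint(\forwardOp^\dt \densityN{n}) = \int_{\spacetime} h(t,y) \dint \densityN{n}(t,y), \quad h(t,y) = \int_{\R^3} G(w-y) \int_{S^2} \phi(t,R(w,v)) \dint \volS(v) \dint w.
\end{equation*}
Both integrands are continuous on $\spacetime$: continuity of $\Psi$ follows from uniform continuity of $\phi$ on the compact set, and the same gives continuity of $h$ in $t$, while continuity (in fact Lipschitz regularity) of $h$ in $y$ follows because $y$ enters only through the Lipschitz, compactly supported kernel $G$. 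Combining with weights $\UF p^\sct$, $p^\dt$ and invoking $\densityN{n} \weakstarto \density$ yields $\forwardOpP \densityN{n} \weakstarto \forwardOpP \density$.

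For the rectangle claim, I would apply the same rearrangement to the characteristic function $\phi = \mathds{1}_{\tau \times \Gamma^1 \times \Gamma^2}$, obtaining
\begin{equation*}
\forwardOpP \density(\tau \times \Gamma^1 \times \Gamma^2) = \int_{\spacetime} \mathds{1}_\tau(t) \tilde K(y) \dint \density(t,y)
\end{equation*}
with
\begin{equation*}
\tilde K(y) = \UF p^\sct \frac{\hd^2(\Gamma^1) \hd^2(\Gamma^2)}{\hd^2(\partial\domDelta)^2} + p^\dt \int_{\R^3} G(w-y) F(w) \dint w,\quad F(w) = \int_{S^2} \mathds{1}_{\Gamma^1 \times \Gamma^2}(R(w,v)) \dint \volS(v),
\end{equation*}
which is Lipschitz in $y$ by the same convolution argument as above. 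The only discontinuity of $\mathds{1}_\tau \cdot \tilde K$ lives on $\partial \tau \times \dom$, which is a nullset for both $\density$ and $\densityN{n}$ because each disintegrates as $\dint t \otimes \densityT{t}$ and $\partial \tau$ has Lebesgue measure zero. Sandwiching $\mathds{1}_\tau$ between continuous $\varphi_\varepsilon^\pm$ with $\varphi_\varepsilon^- \le \mathds{1}_\tau \le \varphi_\varepsilon^+$ converging pointwise to $\mathds{1}_\tau$, passing to the limit in $n$ via weak-* convergence of $\densityN{n}$ for fixed $\varepsilon$, and then taking $\varepsilon \to 0$ by dominated convergence yields the claim.

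The statements for $\forwardBinnedP$ follow immediately: by \eqref{eq:DefDiscreteForwardOperator}, $\forwardBinnedP \densityN{n}$ is a finite linear combination of the characteristic functions $\mathds{1}_{\tau^i \times \Gamma^j \times \Gamma^k}$ weighted by the scalars $\forwardOpP \densityN{n}(\tau^i \times \Gamma^j \times \Gamma^k)$ whose convergence has just been established, so both the weak-* and the rectangle convergence reduce to finite sums of scalar convergences. The main obstacle throughout is that the detector subsets $\Gamma^1, \Gamma^2$ are only assumed measurable, so their boundaries can carry positive $\hd^2$-measure and a Portmanteau-type argument applied directly to $\forwardOpP \densityN{n} \weakstarto \forwardOpP \density$ cannot handle $\mathds{1}_{\Gamma^1 \times \Gamma^2}$; the key observation that circumvents this is that $\Gamma^1, \Gamma^2$ only ever appear inside the convolution with the smoothing kernel $G$, which produces a Lipschitz dependence on $y \in \dom$, leaving only the harmless time indicator $\mathds{1}_\tau$ to be dealt with via the disintegration structure.
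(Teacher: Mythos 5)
Your proof is correct and follows essentially the same strategy as the paper's: rearrange by Fubini so that weak-* convergence of $\densityN{n}$ applies to an integrand that is continuous thanks to the smoothing kernel $G$ and continuity of $R$, handle the remaining $\mathds 1_\tau$ discontinuity via the Lebesgue-negligibility of $\partial\tau$ (you re-derive Portmanteau by sandwiching where the paper cites it directly), and treat $\forwardBinnedP{}$ as a finite combination of already-controlled scalars.
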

\begin{proof}
We start with $\forwardOpP\densityN{n}\weakstarto \forwardOpP\density$. First, the results for the scatter part $A^\sct$ are a direct consequence of the weak-* convergence of $\densityN{n}$.
As for $\forwardOp^\dt$, for a continuous function $\varphi$ we find
\begin{align*}
\lim_{n\to\infty}\int_0^T\!\!\int_{\boundDomDeltaSq}\varphi \dint (\forwardOp^\dt\densityN{n})
=& \lim_{n\to\infty}\int_{S^2}\int_{\domDeltaHalf}\int_0^T\!\!\int_{\dom} \varphi(t, R(x,v))G(x-y)\dint\densityTN{t}{n}(y)\dint t\dint x\dint\volS(v)\\
=&\int_{S^2}\int_{\domDeltaHalf}\int_0^T\int_{\dom} \varphi(t, R(x,v))G(x-y)\dint\densityT{t}(y)\dint t\dint x\dint\volS(v)\\
=&\int_0^T\int_{\boundDomDeltaSq}\varphi \dint (\forwardOp^\dt\density)
\end{align*}
by compactness of the spaces, continuity of $R$ and uniform boundedness of $\norm{\densityN{n}}$. The second assertion follows in a similar way by considering
\begin{align*}
	\forwardOpD\densityN{n}(\tau\times\Gamma^1\times\Gamma^2)=\int_{S^2}\int_{\domDeltaHalf}\mathds{1}_{R^{-1}(\Gamma^1\times\Gamma^2)}(x,v)\int_\tau\int_{\dom}G(x-y)\dint\densityTN{t}{n}(y)\dint t\dint x\dint\volS(v)
\end{align*}
and convergence follows from the Portmanteau Theorem \cite[Thm.\,13.16]{KlenkeProbabilityTheory} applied to $(t,y)\mapsto \mathds{1}_\tau(t)G(x-y)$ because the sets of discontinuities of these functions have measure zero with respect to $\density$. Finally, the transfer to $\forwardBinnedP{}$ is immediate.
\end{proof}

\notinclude{
\begin{lemma}[Properties of the forward operator]\label{thm:PropertiesForwardOp}
\begin{enumerate}[label=(\alpha)]
\item\label{enm:PropertiesForwardOpA}
Let the curve $(\densityT{t})_{t\in[0,T]}$ be weak-* continuous and $\density=\densityT{t}dt\in\measp(\spacetime)$. Then $t\mapsto\forwardP{\densityT{t}}(\Gamma)$ is continuous for every $\Gamma\subset\boundDomDeltaSq$.
\item
Let $\density=\densityT{t}d t\subset\measp(\spacetime)$, let $\tau\subset[0,T]$ be an interval and $\Gamma\subset\boundDomDeltaSq$. Then
\begin{align*}
\forwardP{\density}(\tau\times\Gamma)\lesssim \max\left(\pushforward{R}{(\mathcal{L}\otimes\volG)}(\Gamma),\ \absNorm{\Gamma}\right)\norm{\density}
\end{align*}
If moreover $\densityT{t}(\dom)=\text{const}$ a.e.\ holds, then for almost every $t\in[0,T]$
\begin{align*}
\absNorm{\Gamma}\norm{\density}\lesssim\forwardP{\densityT{t}}(\Gamma)\lesssim\max\left(\pushforward{R}{(\mathcal{L}\otimes\volG)}(\Gamma),\ \absNorm{\Gamma}\right)\norm{\density}
\end{align*}
and if additionally the curve $t\mapsto\densityT{t}$ is weak-* continuous, then there exists $t\in\tau$ such that
\begin{align*}
\Delta t \absNorm{\Gamma}\norm{\density}\lesssim\forwardP{\density}(\tau\times\Gamma)\lesssim\Delta t \forwardP{\densityT{t}}(\Gamma)\lesssim\Delta t \max\left(\pushforward{R}{(\mathcal{L}\otimes\volG)}(\Gamma),\ \absNorm{\Gamma}\right)\norm{\density}.
\end{align*}
\item
Let $\densityN{n}=\densityTN{t}{n}d t\convStar\density=\densityT{t}d t$ in $\measp(\spacetime)$ and let $\Gamma\subset\boundDomDeltaSq$ as well as $\tau\subset[0,T]$ with $\mathcal{L}(\partial\tau)=0$. Then
\begin{align*}
\forwardPN{\densityN{n}}{n}(\tau\times\Gamma)\convN\forwardP{\density}(\tau\times\Gamma)
\end{align*}
and the operator $A^p$ is weak-*-weak-* continuous.
\item
The operator $A^p$ is weak-*-strong continuous as an operator at fixed time, i.e.\ given a weakly-* converging sequence $\measp(\dom)\ni\lambda_n\convStar\lambda\in\measp(\dom)$ it holds
\begin{align*}
\lim_{n\to\infty}\sup_{x\in\boundDomDeltaSq}\absNorm{\forwardP{\lambda_n}(x)-\forwardP{\lambda}(x)}=0
\end{align*}
\item
The operator $A^p$ is weak-*-strong continuous as an operator in spacetime in case of bounded Benamou--Brenier energy, i.e.\ given a weakly-* converging sequence $\measp(\spacetime)\times\meas([0,T]\times\R^3)^3\ni(\density_n,\momentum_n)\convStar(\density,\momentum)\in\measp(\spacetime)\times\meas([0,T]\times\R^3)^3$ such that $\sup_n\BBEnergy(\density_n,\momentum_n)\le C<\infty$ it holds
\begin{align*}
\lim_{n\to\infty}\sup_{(x,t)\in\boundDomDeltaSq\times\timeInterval}\absNorm{\forwardP{\densityN{n}}(t,x)-\forwardP{\density }(t,x)}=0
\end{align*}
\end{enumerate}
\end{lemma}

\begin{proof}
\begin{enumerate}[label=(\alph*)]
\item
Let $t_n\convN t$. By weak-* continuity it follows $\density_{t_n}\convStar\densityT{t}$ in $\measp(\dom)$ and hence by continuity of $\mathds{1}_\dom(y)$ and $G_y(x)$ for every $x$
\begin{align*}
\lim_{n\rightarrow\infty}\forwardP{\densityT{t_n}}(\Gamma)&=pp^s\forwardS{\densityT{t}}(\Gamma)+p^d\int_{\G}\int_{Z_v^\Gamma}\lim_{n\rightarrow\infty}\int_\dom G_y(x)\dint\density_{t_n}(y)\d x\volG(\d v)\\
&=\forwardP{\densityT{t}}(\Gamma)
\end{align*}
\item
It is
\begin{align*}
\forwardP{\density}(\tau\times\Gamma)&\le pp^s\frac{\HausdorffMeas{\Gamma}}{\HausdorffMeas{\boundDomDeltaSq}}\density(\spacetime)+p^d\norm{G}_\infty \int_0^T\density_s(\dom)\d s\int_{\G}\int_{Z_v^{\Gamma}}\dint x\d v\\
&\lesssim \max\left(\pushforward{R}{(\mathcal{L}\otimes\volG)}(\Gamma),\ \absNorm{\Gamma}\right)\norm{\density}.
\end{align*}

If $\densityT{t}(\dom)=\text{const}$ a.e., it holds for almost every $t\in[0,T]$
\begin{align*}
\forwardP{\densityT{t}}(\Gamma)&=pp^s\frac{\HausdorffMeas{\Gamma}}{\HausdorffMeas{\boundDomDeltaSq}}\densityT{t}(\dom)+p^d\int_{\G}\int_{Z_v^{\Gamma}}\int_{\dom} G_y(x)\dint\density_{t}(y)\d x\volG(\d v)\\
&\le pp^s\frac{\HausdorffMeas{\Gamma}}{\HausdorffMeas{\boundDomDeltaSq}}\frac{1}{T}\norm{\density}+p^d\frac{\norm{G}_\infty}{T}\int_0^T\densityT{t}(\dom)\dint t \int_{\G}\int_{Z_v^{\Gamma}}\dint x\volG(\d v)\\
&\lesssim\max\left(\pushforward{R}{(\mathcal{L}\otimes\volG)}(\Gamma),\ \absNorm{\Gamma}\right)\norm{\density}.
\end{align*}
Finally, if $t\mapsto\densityT{t}$ is weaky-* continuous, then by part \ref{enm:PropertiesForwardOpA} $t\mapsto\forwardP{\densityT{t}}(\Gamma)$ is continuous and from the mean value theorem we get $\forwardP{\density}(\tau\times\Gamma)\lesssim\Delta t \forwardP{\densityT{t}}(\Gamma)$ for some $t\in\tau$. \\ 
The estimations from below follow in a similar way by just considering the scattering part.
\item
Using the Portemanteau theorem \cite[Thm.\,13.16]{KlenkeProbabilityTheory} (the sets of discontinuities of $\mathds{1}_\tau(t)\mathds{1}_\dom(y)$ and for every $x$ of $\mathds{1}_\tau(t)G_y(x)$ have measure zero w.r.t.\ $\densityT{t}\d t$) we get
\begin{align*}
\forwardPN{\densityN{n}}{n}(\tau\times\Gamma)&=p_np^s\frac{\HausdorffMeas{\Gamma}}{\HausdorffMeas{\boundDomDeltaSq}}\densityN{n}(\tau\times\dom)+p^d\int_{\G}\int_{Z_v^{\Gamma}}\int_{\tau}\int_{\dom} G_y(x)\dint\densityTN{t}{n}(y)\d t\d x\volG(\d v)\\
\convN&pp^s\frac{\HausdorffMeas{\Gamma}}{\HausdorffMeas{\boundDomDeltaSq}}\density(\tau\times\dom)+p^d\int_{\G}\int_{Z_v^\Gamma}\lim_{n\rightarrow\infty}\int_\tau\int_\dom G_y(x)\dint\densityTN{t}{n}(y)\d t\d x\volG(\d v)
\\&=\forwardP{\density}(\tau\times\Gamma).
\end{align*}
\item
The density of the scattering part is constant and hence the result is immediate. Using \cref{thm:scatterlessDetection}\ref{enm:ForwardDensitySplitting} we can write for the density of the detection part
\begin{align*}
\forwardD{\lambda_n}(x,y)=P[G[\lambda_n]](v(x,y),\text{proj}_{v(x,y)^\bot}(x))g(x,y).
\end{align*}
Using that $G$ is compactly supported we get
\begin{align*}
&\absNorm{\forwardD{\lambda_n}(x,y)-\forwardD{\lambda}(x,y)}\\
\lesssim&\absNorm{\int_\R\int_\dom G_z(\text{proj}_{v(x,y)^\bot}(x)+lv(x,y))\dint\lambda_n(z)\d l-\int_\R\int_\dom G_z(\text{proj}_{v(x,y)^\bot}(x)+lv(x,y))\dint\lambda(z)\d l}\\
\lesssim&\sup_{x\in\dom_{\delta'}}\absNorm{\int_\dom G_z(x)\dint\lambda_n(z)-\int_\dom G_z(x)\dint\lambda(z)}.
\end{align*}
The family of functions $f_n\colon x\mapsto\int_\dom G_z(x)\dint\lambda_n(z)$ is uniformly bounded and equicontinuous by regularity of $G$ and weak-* convergence of $\lambda_n$. Invoking the Arzel\`a--Ascoli theorem we get uniform convergence of $f_n$ along a subsequence. From weak-* convergence we get that $f_n$ converges pointwise to $f\colon x\mapsto\int_\dom G_z(x)\dint\lambda(z)$ and it follows $\sup_x\absNorm{f_n(x)-f(x)}\to0$. Indeed, taking a subsequence $(f_{n_k})_k$ we can, by the above arguments, extract a further subsequence such that $f_{n_{k_j}}$ converges uniformly to the pointwise limit $f$. Hence already $f_n$ converges uniformly to $f$.
\item
First, similar to the argumentation in \cite[Lemma 4.5, Prop.\,A.4]{Bredies_optimalTransport}, we show that $\densityTN{t}{n}\convStar\densityT{t}$ for all $t\in\timeInterval$. By mass conservation and weak-* convergence we get $\norm{\densityTN{t}{n}}=\frac{1}{T}\norm{\densityN{n}}\lesssim1$ and hence $\{ \densityTN{t}{n} \}_{t,n}$ is uniformly bounded in $\mathcal{M}(\dom)$ and therefore contained in some weak-* sequentially compact set $K\subset\mathcal{M}(\dom)$. Next, we need the weak derivative of the function $t\mapsto\int_\dom\varphi\dint\densityTN{t}{n}$ for every $\varphi\in C^1(\dom)$. By the continuity equation it is for $\alpha\in C(\timeInterval)$
\begin{align*}
\int_0^T\int_\dom\partial_t\alpha\varphi\dint\densityTN{t}{n}\d t=-\int_0^T\int_\dom\alpha\dotProd{\nabla_x\varphi}{\d \momentumTN{t}{n}}\dint t.
\end{align*}
Hence, applying Hölder's inequality twice it holds for $s<t\in\timeInterval$ \todo[inline]{cite for weak derivative usage?}
\begin{align*}
\norm{\densityTN{t}{n}-\densityTN{s}{n}}_{C^1(\dom)^*}&=\sup_{\norm{\varphi}_ {C^1(\dom)^*}\le 1}\absNorm{\int_\dom\varphi\left( \d \densityTN{t}{n}-\d \densityTN{s}{n} \right)}\\
&=\sup_{\norm{\varphi}_ {C^1(\dom)^*}\le 1}\absNorm{\int_s^t\int_\dom\dotProd{\nabla_x\varphi}{\d \momentumTN{l}{n}}\dint l}\\
&\le\absNorm{\int_s^t\norm{\densityTN{l}{n}}^{\frac{1}{2}}\left(\int_\dom\absNorm{\frac{\d \momentumTN{l}{n}}{\d \densityTN{l}{n}}}^2\dint\densityTN{l}{n}\right)^{\frac{1}{2}}\d l}\\
&\lesssim\absNorm{t-s}^{\frac{1}{2}}\BBEnergy(\densityN{n},\momentum_n)^{\frac{1}{2}}\lesssim\absNorm{t-s}^{\frac{1}{2}}.
\end{align*}
Since $(\mathcal{M}(\dom), \ \norm{\cdot}_{C^1(\dom)^*})$ is a metric space we can use \cite[Prop.\,3.3.1]{GradientGlows_Ambrosio} to get the existence of a $C^1(\dom)^*$-continuous curve $\density_\infty\colon\timeInterval\to\mathcal{M}(\dom)$ and a subsequence $n_k$ such that $\densityTN{t}{n_k}\xrightarrow{\norm{\cdot}_{C^1(\dom)^*}}\density_{\infty,t}$ for every $t\in\timeInterval$. Because the $C^1(\dom)^*$-norm is weak-* sequentially lower semi continuous the convergence also holds for the weak-* topology. Moreover,
\begin{align*}
\int_0^T\int_\dom\alpha(t)\varphi(x)\dint\density_{\infty,t}\d t=\lim_k\int_0^T\int_\dom\alpha(t)\varphi(x)\dint\densityTN{t}{n_k}\d t=\int_0^T\int_\dom\alpha(t)\varphi(x)\dint\densityT{t}\d t
\end{align*}
shows that, by time continuity, $\densityT{t}=\density_{\infty,t}$.\\ For the actual proof, we proceed similar to the proof of part (d). This time we start with
\begin{align*}
&\absNorm{\forwardD{\densityN{n}}(x,y,t)-\forwardD{\density}(x,y,t)}\\
\lesssim&\absNorm{\int_\R\int_\dom G_z(\text{proj}_{v(x,y)^\bot}(x)+lv(x,y))\dint\densityTN{t}{n}(z)\d l-\int_\R\int_\dom G_z(\text{proj}_{v(x,y)^\bot}(x)+lv(x,y))\dint\densityT{t}(z)\d l}\\
\lesssim&\sup_{(t,x)\in\timeInterval\times\dom_{\delta'}}\absNorm{\int_\dom G_z(x)\dint\densityTN{t}{n}(z)-\int_\dom G_z(x)\dint\densityT{t}(z)}.
\end{align*}
Due to the uniform bound on the Benamou--Brenier energies we can now show equicontinuity and uniform boundedness of the family of functions $f_n\colon(t,x)\mapsto\int_\dom G_z(x)\dint\densityTN{t}{n}(z)$. Uniform boundedness follows from weak-* convergence. For the equicontinuity we write
\begin{align*}
\absNorm{f_n(t,x)-f_n(t',x')}\le\absNorm{f_n(t,x)-f_n(t',x)}+\absNorm{f_n(t',x)-f_n(t',x')}\eqqcolon (i)+(ii).
\end{align*}
Using the same arguments as above for the derivation of $\densityTN{t}{n_k}\convStar\densityT{t}$ we get $(i)\lesssim\absNorm{t-t'}^{\frac{1}{2}}$ and $(ii)$ is equicontinuous by regularity of $G$. Invoking the Arzel\`a--Ascoli theorem we get $f_{n_{k_i}}\xrightarrow{\text{unif}}f_\infty$, because of $\densityTN{t}{n_k}\convStar\densityT{t}$ and together with continuity this implies $f_\infty(t,x)=\int_\dom G_z(x)\dint\densityT{t}(z)$. Repeating the same arguments, but already starting from a subseqeunce $f_{n_j}$ we obtain the same result implying that already $f_n\xrightarrow{\text{unif}}\int_\dom G_z(x)\dint\densityT{t}(z)$ which finally shows
\begin{align*}
&\absNorm{\forwardD{\densityN{n}}(x,y,t)-\forwardD{\density}(x,y,t)}\\
\lesssim&\sup_{(t,x)\in\timeInterval\times\dom_{\delta'}}\absNorm{\int_\dom G_z(x)\dint\densityTN{t}{n}(z)-\int_\dom G_z(x)\dint\densityT{t}(z)}\convN 0.
\end{align*}
\end{enumerate}
\end{proof}
} 

The final result in this section will be needed to study the effect of increasing detector resolution. 

\begin{lemma}[Finer discretization]\label{thm:ConvDisretization}
Let $X$ be a compact metric space with its Borel $\sigma$-algebra and consider the measure $\mu\in\measp(X)$.
Let $X=\bigcup_{k=1}^{N_n}C_n^k$, $n\in\N$, be a sequence of finite partitions with $\max_k\mathrm{diam}(C_n^k)\convN0$.
For a sequence $(g_n)_n\subset \mathrm{L}^2(X)$ of measurable functions on $X$ define
\begin{align*}
G_n\in\mathrm{L}^\infty(X),\quad
x\mapsto\sum_k\frac{\int_{C_n^k}g_n\dint\mu}{{\mu(C_n^k)}}\mathds{1}_{C_n^k}(x),
\end{align*}
where $G_n$ is defined to be zero on sets $C_n^k$ with $\mu(C_n^k)=0$. 
If $g_n\to g$ weakly in $\mathrm{L}^2(X)$, then $G_n\to g$ weakly in $\mathrm{L}^2(X)$.
\end{lemma}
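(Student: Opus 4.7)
The plan is to recognize $G_n$ as the orthogonal projection of $g_n$ (in $L^2(X,\mu)$) onto the finite-dimensional subspace $V_n$ of functions constant on each cell $C_n^k$, and exploit the self-adjointness of this projection together with strong convergence of projected test functions. Throughout I treat $L^2(X)$ as $L^2(X,\mu)$; cells with $\mu(C_n^k)=0$ play no role since they contribute nothing to any $L^2(\mu)$ integral.

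First I would establish uniform boundedness of $\|G_n\|_{L^2(\mu)}$. Cauchy--Schwarz on each nondegenerate cell gives
\begin{equation*}
\int_{C_n^k}G_n^2\dint\mu=\mu(C_n^k)\left(\frac{\int_{C_n^k}g_n\dint\mu}{\mu(C_n^k)}\right)^2\le\int_{C_n^k}g_n^2\dint\mu,
\end{equation*}
so summing over $k$ yields $\|G_n\|_{L^2(\mu)}\le\|g_n\|_{L^2(\mu)}$, and the right-hand side is bounded because weakly convergent sequences are norm-bounded.

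Next, for an arbitrary test function $\phi\in L^2(X,\mu)$, define $\Phi_n$ from $\phi$ by the same piecewise-averaging recipe used to define $G_n$ from $g_n$. A direct computation in each cell gives the self-adjointness identity
\begin{equation*}
\int_X G_n\,\phi\dint\mu=\sum_{k:\mu(C_n^k)>0}\frac{\int_{C_n^k}\phi\dint\mu\cdot\int_{C_n^k}g_n\dint\mu}{\mu(C_n^k)}=\int_X g_n\,\Phi_n\dint\mu.
\end{equation*}
I would then show $\Phi_n\to\phi$ strongly in $L^2(X,\mu)$. For $\phi\in C(X)$ this follows from uniform continuity on the compact metric space $X$: given $\varepsilon>0$, once $\max_k\mathrm{diam}(C_n^k)$ is smaller than the modulus of continuity of $\phi$, each cell average differs from $\phi$ pointwise by at most $\varepsilon$ on the cell, so $\|\Phi_n-\phi\|_{L^\infty(\mu)}\le\varepsilon$, and finiteness of $\mu$ yields $L^2(\mu)$ convergence. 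For general $\phi\in L^2(X,\mu)$ I would use density of $C(X)$ in $L^2(X,\mu)$ (valid since $\mu$ is a finite Radon measure on a compact metric space) together with the contraction property $\|\Phi_n\|_{L^2(\mu)}\le\|\phi\|_{L^2(\mu)}$ established as above, via a standard $\varepsilon/3$ argument.

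Finally, I split
\begin{equation*}
\int_X g_n\,\Phi_n\dint\mu=\int_X g_n(\Phi_n-\phi)\dint\mu+\int_X g_n\,\phi\dint\mu,
\end{equation*}
and bound the first term by $\|g_n\|_{L^2(\mu)}\|\Phi_n-\phi\|_{L^2(\mu)}\to 0$ (strong times bounded), while the second tends to $\int g\,\phi\dint\mu$ by the assumed weak convergence. This gives $\int G_n\phi\dint\mu\to\int g\,\phi\dint\mu$ for every $\phi\in L^2(X,\mu)$, i.e.\ weak convergence $G_n\rightharpoonup g$. The main obstacle is the strong convergence $\Phi_n\to\phi$ in $L^2(\mu)$ for nonsmooth $\phi$; everything else is an elementary projection computation and a weak-times-strong argument.
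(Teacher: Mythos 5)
Your proof is correct, and it takes a genuinely different route from the paper's. The paper tests $G_n - g$ against a continuous $\varphi$ and decomposes the result into the piece $\int(g_n-g)\varphi\dint\mu$ (handled by weak convergence of $g_n$) plus two error terms built from fixing a reference point $p_n^k$ in each cell and exploiting uniform continuity of $\varphi$; it then appeals to density of $C(X)$ in $L^2$. You instead identify $G_n$ as the $L^2(\mu)$-orthogonal projection (conditional expectation) of $g_n$ onto the piecewise-constant subspace, use its self-adjointness to move the averaging onto the test function, $\int G_n\phi\dint\mu=\int g_n\Phi_n\dint\mu$, and then run a clean weak-times-strong argument once you know $\Phi_n\to\phi$ strongly in $L^2$. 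Both proofs ultimately rest on the same three ingredients — vanishing mesh, uniform continuity of continuous functions on compact $X$, and density of $C(X)$ in $L^2(\mu)$ — but your formulation buys two things: it handles arbitrary $\phi\in L^2$ directly rather than only a dense subclass, and it makes the uniform bound $\normS{G_n}_{L^2}\le\normS{g_n}_{L^2}$ explicit (which the paper uses implicitly when passing from convergence on the dense set $C(X)$ to weak convergence). The paper's argument is a touch more elementary in that it avoids invoking the projection structure, but conceptually yours is tighter and makes the ``weak input on $g_n$, strong on the test side'' mechanism transparent.
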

\notinclude{
\begin{remark}[Vitali relation]
	The additional assumption $\absNorm{\frac{1}{\mu(C_n^{k_n})}\int_{C_n^{k_n}}g\dint\mu-g(x)}\to0$
	is for instance satisfied if $(C_n^k)_{n,k}$ is a $\mu$-Vitali relation (cf.\ \cref{AppendixVitaliRelation}).
\end{remark}
}

\begin{proof}
First note that $G_n$ is well-defined, because every summand is finite by Jensen's inequality,
$[\int_{C_n^k}g_n\dint\mu/\mu(C_n^k)]^2\leq\int_{C_n^k}g_n^2\dint\mu/\mu(C_n^k)\leq\norm{g_n}_{\mathrm{L}^2}^2/\mu(C_n^k)$.

We show $\lim_{n\to\infty}\int_X\varphi G_n\dint\mu=\int_X\varphi g\dint\mu$ for all $\varphi\in C(X)$.
By density of $C(X)$ in $L^2(X)$ \cite[Prop.\,7.9]{Folland_RealAnalysis} this implies the desired weak convergence.
\notinclude{First, 
by Hölder's inequality it is
\begin{align*}
\norm{G_n}_{\mathrm{L}^2}^p&=\int_X\left( \sum_k\frac{1}{{\mu(C_n^k)}}\int_{C_n^k}g_n(y)\dint\mu(y)\mathds{1}_{C_n^k}(x) \right)^2\dint\mu(x)\\
&=\sum_k\frac{1}{\mu(C_n^k)}\left( \int_{C_n^k}g_n(y)\dint\mu(y) \right)^2\\
&\le\sum_k\left( \int_{C_n^k}g_n^2(y)\dint\mu(y) \right)=\int_{X}g_n^2(y)\dint\mu(y)\lesssim1.
\end{align*}
}
Let $p_n^k\in C_n^k$\notinclude{ for all $n,k$}, then
\begin{multline*}
	\int_X(G_n-g)\varphi\dint\mu
	=\int_X(g_n(y)-g(y))\varphi(y)\dint\mu(y)\\
	+\underbrace{\sum_k\int_{C_n^k}\frac{\int_{C_n^k}g_n\dint\mu}{\mu(C_n^k)}(\varphi(x)-\varphi(p_n^k))\dint\mu(x)}_{=\colon\text{error}_1}
	+\underbrace{\sum_k\int_{C_n^k}g_n(y)(\varphi(p_n^k)-\varphi(y))\dint\mu(y)}_{=\colon\text{error}_2}
\end{multline*}
%
Since $X$ is compact, $\varphi$ is uniformly continuous. Thus, for every $\varepsilon>0$ there exists a $\delta>0$ such that
\begin{align*}
\absNorm{\varphi(x)-\varphi(\tilde{x})}<\varepsilon \quad\text{whenever}\quad \dist(x,\tilde{x})<\delta.
\end{align*}
Now let $N(\delta)$ large enough so that $\mathrm{diam}(C_n^k)<\delta$ for all $n\ge N(\delta)$, then for those $n$
\begin{equation*}
\absNorm{\text{error}_1}+\absNorm{\text{error}_2}
\le\varepsilon\sum_k\left(\int_{C_n^k}\absNorm{\frac{\int_{C_n^k}g_n\dint\mu}{\mu(C_n^k)}}\dint\mu
+\int_{C_n^k}\absNorm{g_n}\dint\mu\right)
\leq2\varepsilon\int_X\absNorm{g_n}\dint\mu
\lesssim\varepsilon
\end{equation*}
so that $\int G_n\varphi\dint\mu\to\int g\varphi\dint\mu$ and thus $G_n\xrightharpoonup{\mathrm{L}^2} g$.
%
\end{proof}

\section{$\Gamma$-convergence}\label{SectionGammaConvergence}
In this section we analyze the stability of the PET reconstruction model \eqref{eqn:minimizationFunctionalDiscrete} by means of $\Gamma$-convergence as the SNR tends to infinity.
This can mathematically be expressed by an increasing measurement intensity $\intensity\to\infty$ (corresponding to a decreasing halflife $\halflife=\frac{1}{q}\to 0$).
We take the minimum over all possible momentum measures $\momentum$ in \eqref{eqn:minimizationFunctionalDiscrete} as we are mostly interested in the stability of the reconstructed density $\density$.
(Note that if we also send the regularization parameter $\beta$ to zero, as we typically would in the vanishing noise limit, then stability of $\momentum$ cannot be expected since it is no longer regularized and no compactness results can be obtained.)
For a given measurement $E_{q_n}$ (i.e.\ a sum of Dirac measures on $[0,T]\times\boundDomDeltaSq$) this leads to the functional
\begin{align*}
\mathcal{E}^{E_{q_n}}_n(\density)&=\norm{A\density} - \frac{1}{\intensity_n}\int\Log{\forwardBinnedPN{\density}{n}}\d E_{q_n}+\beta_n\min_\momentum\BBEnergy(\density,\momentum)\\
&=\norm{\forward{\density}}-\frac{1}{q_n}\sum_{i=1}^{N_n}\sum_{j,k=1}^{M_n}\Log{\frac{\forwardPN{\density}{n}(\tau_n^i\times\Gamma_n^j\times\Gamma_n^k)}{\mathcal{L}^1(\tau_n^i)\hd^2(\Gamma_n^j)\hd^2(\Gamma_n^k)}} E_{q_n}(\tau_n^i\times\Gamma_n^{j}\times\Gamma_n^k) +\beta_n\min_\momentum\BBEnergy(\density,\momentum),
\end{align*}
where all quantities that may vary with the intensities $\intensity_n\to\infty$ are now indexed by $n$ 
(we drop the index if we do not refer to a sequence).
Note that minimizing $\mathcal{E}^{E_{q_n}}_n$ is equivalent to minimizing \eqref{eqn:minimizationFunctionalDiscrete}.


\begin{theorem}[Existence of minimizers, {\cite[Thm.\,4.7]{PET_Base}}]\label{TheoremMinExistence}
	Let $\beta,p^\dt,p^\sct,q, u>0$ and let the measurement $E_q$ be a realization of $\bm E_q=\Poi{q\forwardOp\density^\dagger}$
	for some ground truth material distribution $\density^\dagger=\dt t\otimes\densityT{t}^\dagger\in\measp(\spacetime)$.
	Then almost surely (in particular if $\norm{E_q}<\infty$) the set of minimizers of $J^{E_q}$ and $\mathcal{E}^{E_q}_n$ is non-empty and compact with respect to weak-* convergence.
\end{theorem}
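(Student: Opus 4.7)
The plan is to apply the direct method of the calculus of variations to $J^{E_q}(\density,\momentum)$: I would (i) show the functional is proper, (ii) establish weak-$\ast$ coercivity of sublevel sets, and (iii) verify weak-$\ast$ lower semi-continuity of each term. The argument transfers verbatim to $\mathcal{E}^{E_q}_n$ for fixed $n$.

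First, because $\forwardOp\density^\dagger$ is a finite measure, $\bm E_q(\timeInterval\times\boundDomDeltaSq)$ is Poisson distributed with finite parameter $q\normS{\forwardOp\density^\dagger}$, hence $\normS{E_q}<\infty$ almost surely, and I restrict to this event. To see that $J^{E_q}\not\equiv+\infty$, I use the stationary competitor $\tilde\density=\dt t\otimes(\mathcal L^3\restr\dom)/\mathcal L^3(\dom)$ together with $\tilde\momentum=0$: then $\BBEnergy(\tilde\density,\tilde\momentum)=0$, $\normS{\forwardOp\tilde\density}<\infty$, and by \cref{thm:scatterlessDetection} $\forwardBinnedP{\tilde\density}$ is bounded above and bounded away from zero, so the data term is finite.

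For coercivity, I consider any sequence $(\density_n,\momentum_n)$ with $J^{E_q}(\density_n,\momentum_n)\leq C$. The scattering contribution yields $\normS{\forwardOp\density_n}\geq p^\sct\normS{\forwardOp^\sct\density_n}\gtrsim\normS{\density_n}$, while \cref{thm:scatterlessDetection} gives the pointwise upper bound $\forwardBinnedP{\density_n}\leq C'\normS{\density_n}$, so
\[
J^{E_q}(\density_n,\momentum_n)\gtrsim\normS{\density_n}-\tfrac{\normS{E_q}}{q}\bigl(\log C'+\log\normS{\density_n}\bigr)+\beta\BBEnergy(\density_n,\momentum_n).
\]
Since $s\mapsto s-c\log s$ is coercive on $(0,\infty)$, both upper and lower bounds on $\normS{\density_n}$ follow (strictly away from zero whenever $\normS{E_q}\geq1$). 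The Jensen estimate $\normS{\momentum_n}\leq\normS{\density_n}^{1/2}\BBEnergy(\density_n,\momentum_n)^{1/2}$ used in the proof of \cref{thm:continuityBBEnergy} then bounds $\normS{\momentum_n}$ as well, and Banach--Alaoglu produces a weakly-$\ast$ convergent subsequence $(\density_n,\momentum_n)\weakstarto(\density,\momentum)$.

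For lower semi-continuity, weak-$\ast$ continuity of $\forwardOp$ from \cref{thm:ContinuityForwardOp} combined with lsc of the TV norm gives $\normS{\forwardOp\density}\leq\liminf_n\normS{\forwardOp\density_n}$, while \cref{rem:propertiesBenamouBrenier}\ref{enm:Slsc} handles $\BBEnergy$. The data term is the only delicate piece: \cref{thm:ContinuityForwardOp} ensures $\forwardBinnedP{\density_n}\to\forwardBinnedP{\density}$ on every bin $\tau^i\times\Gamma^j\times\Gamma^k$, and since the limiting values are uniformly bounded above (again by \cref{thm:scatterlessDetection}) and bounded away from zero (because $\normS{\density}>0$ forces $\forwardBinnedP{\density}\geq\UF p^\sct A^\sct\density>0$ everywhere), $\log\forwardBinnedP{\density_n}$ converges at the finitely many atoms of $E_q$, making the data term actually continuous along the subsequence. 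Assembling the three estimates gives $J^{E_q}(\density,\momentum)\leq\liminf_nJ^{E_q}(\density_n,\momentum_n)$, so $(\density,\momentum)$ is a minimiser; compactness of the minimiser set follows by applying exactly the same extraction and lsc argument to an arbitrary sequence of minimisers.

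The main obstacle is controlling the log term, and in particular ruling out $\density=0$ in the limit, where my pointwise-continuity argument for $\log\forwardBinnedP{\density}$ breaks down. This I settle by observing that the coercivity estimate above actually excludes $\normS{\density_n}\to0$ along a minimising sequence whenever $\normS{E_q}\geq1$; on the complementary event $\normS{E_q}=0$, $J^{E_q}$ reduces to $\normS{\forwardOp\density}+\beta\BBEnergy(\density,\momentum)$ and is trivially minimised by $(0,0)$.
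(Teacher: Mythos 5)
Your argument is correct, and it is the natural direct-method proof. Note that the paper itself does not prove this theorem; it imports it from \cite{PET_Base}, so there is no in-paper proof to compare against. Your structure — properness via a stationary competitor, coercivity from the scattering lower bound $\norm{\forwardOp\density}\gtrsim\norm{\density}$ combined with the uniform bounds $\tfrac1C\norm\density\le\forwardBinnedP\density\le C\norm\density$ of \cref{thm:scatterlessDetection}, the Jensen bound $\norm{\momentum}\le\norm{\density}^{1/2}\BBEnergy(\density,\momentum)^{1/2}$ for the momentum, and lower semi-continuity of the norm and of $\BBEnergy$ together with bin-wise continuity of $\forwardBinnedP{}$ from \cref{thm:ContinuityForwardOp} — is exactly what one would expect. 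Two small points worth being explicit about: (i) the two-sided bound of \cref{thm:scatterlessDetection} requires $\density_n\in\meas_c$, which you implicitly obtain from $\BBEnergy(\density_n,\momentum_n)<\infty$ via \cref{rem:propertiesBenamouBrenier}\ref{enm:massConservation}, and the limit stays in $\meas_c$ with $\norm{\density_n}\to\norm{\density}>0$ by \cref{thm:closednessOfBSet}, which is what finally justifies applying the pointwise-continuity argument for the logarithm at the atoms of $E_q$; (ii) for $\mathcal{E}^{E_q}_n$ one also needs the lower semi-continuity of $\density\mapsto\min_\momentum\BBEnergy(\density,\momentum)$, which is \cref{thm:continuityBBEnergy}. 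Both are available in the paper and your proof uses them in spirit, so these are presentational rather than substantive gaps.
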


To prove convergence of our reconstruction to the ground truth in the vanishing noise limit,
we will need to infinitely refine the detectors and time bins at the same time.
However, from an application viewpoint it might also be of interest to keep the spatial and/or temporal resolution finite.
To more easily treat all these cases together (despite e.g.\ the limit forward operators being different),
we give labels to the different situations:
\begin{enumerate}[label=(\Alph*)]
	\item\label{enm:TauConstGammaConst} $\tau_n^i=\tau^i$ and $\Gamma_n^{k}=\Gamma^{k}$ constant in $n$
	\item\label{enm:TauNonconstGammaConst} $\max_i\mathcal{L}^1(\tau_n^i)\rightarrow 0$ and $\Gamma_n^{k}=\Gamma^{k}$ constant in $n$
	\item\label{enm:TauConstGammaNonconst} $\max_k\mathrm{diam}(\Gamma^k_n)\rightarrow 0$ and $\tau_n^i=\tau^i$ constant in $n$
	\item\label{enm:TauNonconstGammaNonconst} $\max_i\mathcal{L}^1(\tau_n^i)\rightarrow 0$ and $\max_k\mathrm{diam}(\Gamma^k_n)\rightarrow 0$
\end{enumerate}
The corresponding limit forward operators (which provide the intensity of photon pair detection on $[0,T]\times\partial\domDelta\times\partial\domDelta$ as a density w.r.t.\ $\nu=\dt t\otimes\HausdorffMeasSq$) are expressed via the same symbol as
\begin{align*}
\forwardBinnedLimit{\density}(t,x,y)=\begin{cases}
\forwardBinnedP{\density}(t,x,y)\ \  &\text{\ref{enm:TauConstGammaConst},}\\
\sum_{j,k=1}^M\int_{\Gamma^j\times\Gamma^k}\RNderivative{\forwardP{\density}}{\nu}(t, a, b)\frac{\d \HausdorffMeas{a, b}}{\hd^2(\Gamma^j)\hd^2(\Gamma^k)}\mathds{1}_{\Gamma^j\times\Gamma^k}(x,y) \ \  &\text{\ref{enm:TauNonconstGammaConst},}\\
\sum_{i=1}^N\int_{\tau^i}\RNderivative{\forwardP{\density}}{\nu}(s, x, y)\frac{\dint s}{\mathcal{L}^1(\tau^i)} \mathds{1}_{\tau^i}(t) \ \  &\text{\ref{enm:TauConstGammaNonconst},}\\
\RNderivative{\forwardP{\density}}{\nu}(t,x,y)\ \  &\text{\ref{enm:TauNonconstGammaNonconst}.}
\end{cases}
\end{align*}

If one considers a limit in which the regularization stays active
(which may for instance be advisable to improve reconstruction quality if the detector or time interval sizes stay bounded away from zero),
then the corresponding $\Gamma$-convergence result is fairly straightforward.

\begin{theorem}[$\Gamma$-convergence for active limit regularization]\label{thm:GammaConv2}
Let $q_n\to\infty$, $\UF_n\to\UF>0$, and $\beta_n\to\beta>0$ monotonically as $n\to\infty$ for real positive sequences $q_n,\UF_n,\beta_n$.
Further, given $0\neq\density^\dagger\in\meas_c$  (cf.\ \eqref{eqn:constantMass}), let $\bm E_{q_n}$ be a PPP with intensity measure $q_n\forward{\density^\dagger}>0$ such that
\begin{enumerate}[label=(\alph*)]
\item
$\bm E_{q_n}$ is an arbitrary sequence of PPP and $\log(n)\in o(q_n)$, or
\item
$\bm E_{q_n}$ is obtained from stochastic coupling.
\end{enumerate}
Then almost surely, the $\Gamma$-limit of $\mathcal{E}^{\bm E_{q_n}}_n$ w.r.t.\ weak-* convergence is
\begin{equation*}
\Gamma-\lim_{n\to\infty}\mathcal{E}^{\bm E_{q_n}}_n=\mathcal{E}_\infty
\qquad\text{for}\quad
\mathcal{E}_\infty(\density)=\norm{\forward{\density}}-\int\Log{\forwardBinnedLimit{\density}}\dint\forward{\density^\dagger}+\beta\min_\momentum\BBEnergy(\density,\momentum)+\iota_{\meas_c}(\density),
\end{equation*}
where $\iota_{\meas_c}$ is the convex indicator function of $\meas_c$. 
\end{theorem}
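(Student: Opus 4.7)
The plan is to prove both the liminf inequality and existence of a recovery sequence for $\Gamma$-convergence, handling the four summands of $\mathcal{E}^{\bm E_{q_n}}_n$ separately. Since $\beta_n\to\beta>0$ and $u_n\to u>0$, I would take the recovery sequence to be the constant sequence $\density_n\equiv\density$, reducing both directions to showing \emph{continuous} convergence of each summand under weak-* convergence $\density_n\weakstarto\density$, at least under the uniform mass bounds forced by $\liminf_n\mathcal{E}^{\bm E_{q_n}}_n(\density_n)<\infty$.

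For the three easy summands, I would first pass to a subsequence realizing the liminf. The boundedness of $\beta_n\min_\momentum\BBEnergy(\density_n,\momentum)$ places $\density_n\in\meas_c$ via \cref{rem:propertiesBenamouBrenier}\ref{enm:massConservation}, so by \cref{thm:closednessOfBSet} also $\density\in\meas_c$ and $\normS{\density_n}\to\normS{\density}$; this handles both the indicator $\iota_{\meas_c}(\density)$ and the mass term $\normS{\forward{\density_n}}=(1-p^\att)\normS{\density_n}$. Lower semi-continuity of the regularization follows from \cref{thm:continuityBBEnergy} combined with $\beta_n\to\beta$. Inserting the bound $\forwardBinnedPN{\density_n}{n}\in[\normS{\density_n}/C,C\normS{\density_n}]$ from \cref{thm:scatterlessDetection} into $\mathcal{E}^{\bm E_{q_n}}_n$ one sees that $\normS{\density_n}$ must also be bounded above (since $\normS{\forward{\density_n}}$ grows linearly whereas the data term grows only as $-\log\normS{\density_n}$) and bounded below away from zero (else $-\log(\forwardBinnedPN{\density_n}{n})\to+\infty$ would drive the data term past any finite liminf).

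The main obstacle is the data term, for which I would prove actual convergence via the decomposition
\begin{equation*}
\int\log(\forwardBinnedPN{\density_n}{n})\dint\tfrac{\bm E_{q_n}}{q_n}=\int\log(\forwardBinnedPN{\density_n}{n})\dint\bigl(\tfrac{\bm E_{q_n}}{q_n}-\forward{\density^\dagger}\bigr)+\int\log(\forwardBinnedPN{\density_n}{n})\dint\forward{\density^\dagger}.
\end{equation*}
Since the logarithm is constant on each cell $C_n^{i,j,k}=\tau_n^i\times\Gamma_n^j\times\Gamma_n^k$, the first summand is bounded in modulus by the uniform $L^\infty$-bound on $\log(\forwardBinnedPN{\density_n}{n})$ (guaranteed by \cref{thm:scatterlessDetection} together with the mass bounds) times the total variation $\sum_{i,j,k}|\tfrac{1}{q_n}\bm E_{q_n}(C_n^{i,j,k})-\forward{\density^\dagger}(C_n^{i,j,k})|$, which vanishes almost surely by \cref{thm:stochConv} (the intensity $\forward{\density^\dagger}$ is diffuse since the positron-range convolution with $G$ smooths $\forwardD{\density^\dagger}$ and the scatter part is proportional to a Hausdorff measure). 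The second summand converges to $\int\log(\forwardBinnedLimit{\density})\dint\forward{\density^\dagger}$ by dominated convergence: the logarithm is uniformly bounded, and $\forwardBinnedPN{\density_n}{n}\to\forwardBinnedLimit{\density}$ pointwise $\nu$-a.e.\ in each of the scenarios (A)--(D). For this latter convergence I would use that the bound on $\min_\momentum\BBEnergy(\density_n,\momentum)$ yields uniform H\"older-$1/2$ continuity of $t\mapsto\densityTN{t}{n}$ in $\mathbb W_2$, so Arzel\`a--Ascoli extracts a subsequence with $\densityTN{t}{n}\convStar\densityT{t}$ for every $t$; combined with the smoothness of the positron-range kernel and \cref{thm:ContinuityForwardOp} this gives pointwise convergence of $\RNderivative{\forwardPN{\density_n}{n}}{\nu}$, which via \cref{thm:ConvDisretization} or a Lebesgue-differentiation argument transfers to $\forwardBinnedPN{\density_n}{n}$ in the refining cases (B)--(D). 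The recovery sequence $\density_n\equiv\density$ inherits all of these convergences. The subtlest remaining technicality is verifying the condition $\sqrt{K_n/q_n}\in o(r_n)$ of \cref{thm:stochConv} in the refining scenarios, which imposes an implicit compatibility between the partition size $K_n=N_nM_n^2$ and the SNR growth $q_n$.
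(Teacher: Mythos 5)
Your overall structure (constant recovery sequence, handling $\iota_{\meas_c}$, $\norm{\forward\cdot}$, and the regularizer via \cref{thm:closednessOfBSet} and \cref{thm:continuityBBEnergy}) matches the paper. The genuine gap is in your treatment of the data term, and it is more than the ``subtlest remaining technicality'' you flag at the end: the condition you would need, $K_n=N_nM_n^2\in o(q_n)$ for \cref{thm:stochConv} to apply with rate $r_n\simeq 1$, is \emph{not} among the hypotheses of the theorem, and it cannot be derived from them. In the refining scenarios \ref{enm:TauNonconstGammaConst}--\ref{enm:TauNonconstGammaNonconst} the detector and time-bin counts may grow arbitrarily fast relative to $q_n$, so your bound
\begin{equation*}
\absNorm{\int\log(\forwardBinnedPN{\density_n}{n})\dint(\tfrac{\bm E_{q_n}}{q_n}-\forward{\density^\dagger})}
\leq\normS{\log(\forwardBinnedPN{\density_n}{n})}_\infty\sum_{i,j,k}\absNorm{\tfrac{1}{q_n}\bm E_{q_n}(C_n^{i,j,k})-\forward{\density^\dagger}(C_n^{i,j,k})}
\end{equation*}
does not converge in general. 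Your decomposition leaves both the integrand and the measure varying in the ``error'' piece, which forces a total-variation (cell-wise $\ell^1$) control on the empirical measure.

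The paper avoids this entirely by first upgrading the convergence of $\forwardBinnedPN{\density_n}{n}$ to be \emph{uniform} and to a \emph{continuous} limit $\forwardBinnedLimit{\density}$. The mechanism is exactly the temporal H\"older-$1/2$ regularity you mention (the uniform bound on $\BBEnergy$ via $\beta_n\to\beta>0$, together with Lemma\,4.4 of \cite{PET_Base}), but carried all the way through: $\forwardP\density_n\to\forwardP\density$ uniformly in space\emph{time}, not merely pointwise. Uniform convergence of the integrand pairs with mere weak-$*$ convergence of $\bm E_{q_n}/q_n$ (from \cref{thm:flatConvergence}, which needs no relation between $N_nM_n^2$ and $q_n$): writing the data term error as
\begin{equation*}
\int\bigl(\log\forwardBinnedPN{\density_n}{n}-\log\forwardBinnedLimit{\density}\bigr)\dint\tfrac{\bm E_{q_n}}{q_n}
+\int\log\forwardBinnedLimit{\density}\dint\bigl(\tfrac{\bm E_{q_n}}{q_n}-\forward{\density^\dagger}\bigr),
\end{equation*}
the first piece is $\norm{\cdot}_\infty$-small and the second vanishes by weak-$*$ convergence against a fixed bounded (piecewise continuous with $\nu$-null discontinuity set) test function. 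You actually assemble all the needed ingredients (Arzel\`a--Ascoli from the H\"older bound, regularity of $G$), but you stop at pointwise a.e.\ convergence and dominated convergence instead of pushing to uniformity; this is exactly what forces you back onto the total-variation bound and the unassumed condition. Note also that \cref{thm:ConvDisretization} only yields weak $\mathrm{L}^2$ convergence, so it cannot be used for pointwise convergence of $\forwardBinnedPN{\density_n}{n}$ the way you suggest; a correct route is via uniform continuity of the limiting density and uniform convergence of the cell-wise averages.
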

\begin{proof}
We first show that if $\densityN{n}\weakstarto\density$ with uniformly bounded $\BBEnergy(\densityN{n},\momentum_n)\leq C$,
then $\forwardBinnedPN{\densityN{n}}{n}\to\forwardBinnedLimit{\density}$ uniformly.
Indeed, by \cite[Lemma 4.4]{PET_Base} (this result requires $\momentum_n\weakstarto\momentum$ only to get the disintegration $\density=\d t\otimes\densityT{t}$, but the latter already follows from closedness of $\meas_c$ by \cref{thm:closednessOfBSet})
we have $\forwardP\density_n\in C(\spacetimedet)$ and $\forwardP\density_n\to \forwardP\density$ uniformly by the uniform bound on $\BBEnergy$. Due to $\norm{\densityN{n}}\to\norm{\density}$ (\cref{thm:closednessOfBSet}) this also implies $\forwardBinnedPN{(\densityN{n}-\density)}{n}\to0$ uniformly.
Since $\forwardBinnedPN{\density}{n}\to\forwardBinnedLimit{\density}$ uniformly
(recall that the uniform convergence $\forwardP\density_n\to\forwardP\density$ implies continuity of $\forwardP\density$),
we obtain the desired uniform convergence $\forwardBinnedPN{\densityN{n}}{n}\to\forwardBinnedLimit{\density}$.

Now consider the liminf inequality.
Let $\densityN{n}\weakstarto\density$ with $\liminf_n\mathcal{E}^{\bm E_{q_n}}_n(\densityN{n})<\infty$ (otherwise there is nothing to show).
We may even assume the liminf to be a limit (else we pass to a subsequence).
We have $\liminf_n\norm{\forwardOp\densityN{n}}\geq\norm{\forwardOp\density}$ due to weak-* lower semi-continuity of the norm.
Likewise $\liminf_n\beta_n\min_\momentum\BBEnergy(\densityN{n},\momentum)\geq\beta\min_\momentum\BBEnergy(\density,\momentum)$ by \cref{thm:continuityBBEnergy}.
Finally, due to the uniform convergence $\forwardBinnedPN{\densityN{n}}{n}\to\forwardBinnedLimit{\density}$ from above
and almost sure weak-* convergence $\bm E_{q_n}/q_n\weakstarto\forward{\density^\dagger}$ by \cref{thm:flatConvergence}
we have $\int\Log{\forwardBinnedPN{\density}{n}}\dint \bm E_{q_n}/q_n\to\int\Log{\forwardBinnedLimit{\density}}\dint\forward{\density^\dagger}$, as desired.

For the limsup inequality take the constant sequence $\densityN{n}=\density$,
then $\lim_n\mathcal{E}^{\bm E_{q_n}}_n(\densityN{n})=\mathcal{E}_\infty(\density)$ follows again
from the uniform convergence $\forwardBinnedPN{\densityN{n}}{n}\to\forwardBinnedLimit{\density}$
(as long as $\min_\momentum\BBEnergy(\density,\momentum)<\infty$ -- otherwise there is nothing to show).
%
\end{proof}

However, if SNR and spatiotemporal resolution tend to infinity, the regularization should vanish in the limit not to artificially distort the reconstruction.
This setting is more complicated and requires some preparation.
During the remainder of the article we will impose the following uniformity and compatibility conditions on the detectors and time intervals.
\begin{assumption}[Uniform and compatible refinement]\label{AssumptionSetting}
For each level $n$ we assume:
\begin{enumerate}[label=(\roman*)]
\item\label{enm:timePartition}
The time intervals $(\tau_n^i)_{i=1,\ldots,N_n}$ form a partition of $[0,T]$.
The partition is quasiuniform in the sense that there exist $c,c'>0$ independent of $n$ with $c'\leq N_n\mathcal{L}^1(\tau_n^i)\leq c$ for all $i$.
Moreover, the partition is a refinement of the previous partition,
i.e.\ each $\tau_n^i$ lies within one and only one $\tau_{n-1}^j$.
\item
The detectors $(\Gamma_n^k)_{k=1,\ldots,M_n}$ are Borel and path connected and
they form a partition of $\partial\domDelta$.
The partition is quasiuniform in the sense that there exist $c,c'>0$ independent of $n$ with $c'\leq M_n\mathrm{diam}(\Gamma_n^k)^2\leq c$ for all $k$.
Moreover, the partition is a refinement of the previous partition,
i.e.\ each $\Gamma_n^k$ lies within one and only one $\Gamma_{n-1}^l$.
\end{enumerate}
\end{assumption}
Obviously, the quasiuniformity assumptions are only relevant if the time interval or detector sizes tend to zero.
\notinclude{
In the latter setting, the assumption on the detectors guarantees that 
\begin{equation}\label{VitaliRelationOfDetectors}
V=\left\{\left((x,y), \Gamma'\times \Gamma''\right) \ \middle| \ (x,y)\in\Gamma'\times \Gamma'', \ \Gamma',\Gamma''\in\left({\Gamma_n^k}\right)_{n,k}\right\}
\end{equation}
is a $\hd^2\restr\partial\domDelta\otimes \hd^2\restr\partial\domDelta$ Vitali relation by \cref{thm:VitaliRelationFederer}.
}

For the $\Gamma$-convergence \cref{thm:GammaConv} with vanishing regularization we need some preparatory results. First, we note that mass preservation in time (which is implied by our regularization $\BBEnergy$ and which will be the only regularization in the limit functional for vanishing limit regularization, see \cref{thm:GammaConv}) is conserved in the weak-* limit by \cref{thm:closednessOfBSet}.
Furthermore, the next result will be used to prove the liminf inequality in the $\Gamma$-convergence.
\begin{lemma}[Convergence of forward operator]\label{thm:Mazur}
Let $\densityN{n}\convStar\density\neq0$ in $\meas_c\subset\measp(\spacetime)$ and $\UF_n\rightarrow \UF\ge 0$.
Then for all cases \ref{enm:TauConstGammaConst}-\ref{enm:TauNonconstGammaNonconst}
\begin{gather*}
\forwardBinnedPN{\densityN{n}}{n}\to\forwardBinnedLimit{\density}
\text{ weakly in }\mathrm{L}^2([0,T]\times\boundDomDeltaSq)
\qquad\text{and}\\
\int_0^T\int_{(\partial\domDelta)^2}\Log{\forwardBinnedLimit{\density}}\RNderivative{\forward{\density^\dagger}}{\nu}\dint\HausdorffMeasSq\dint t
\ge\limsup_{n\rightarrow\infty}\int_0^T\int_{(\partial\domDelta)^2}\Log{\forwardBinnedPN{\densityN{n}}{n}}\RNderivative{\forward{\density^\dagger}}{\nu}\dint\HausdorffMeasSq\dint t.
\end{gather*}
\end{lemma}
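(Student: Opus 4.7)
The plan is to first prove the weak $L^2$ convergence of $\forwardBinnedPN{\densityN{n}}{n}$ to $\forwardBinnedLimit{\density}$ and then deduce the $\limsup$ inequality from concavity of $\log$. I would start by gathering uniform bounds: \cref{thm:scatterlessDetection} gives $\frac{1}{C}\norm{\densityN{n}}\leq\RNderivative{\forwardBinnedPN{\densityN{n}}{n}}{\nu}\leq C\norm{\densityN{n}}$ (and the analogous bound for $\forwardBinnedLimit{\density}$), while \cref{thm:closednessOfBSet} gives $\norm{\densityN{n}}\to\norm{\density}>0$. Hence these densities lie in a fixed ball of $L^\infty(\nu)\subset L^2(\nu)$ and are bounded below by a positive constant uniformly in $n$.

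Next, let $f_n\coloneqq\RNderivative{A^{\UF_n}\densityN{n}}{\nu}$ and $f\coloneqq\RNderivative{\forwardP{\density}}{\nu}$. The splitting $A^{\UF_n}\densityN{n}-\forwardP{\density}=A^{\UF_n}(\densityN{n}-\density)+(\UF_n-\UF)p^\sct\forwardS{\density}$ combined with \cref{thm:ContinuityForwardOp} and $\UF_n\to\UF$ yields $A^{\UF_n}\densityN{n}\weakstarto\forwardP{\density}$ in $\meas(\spacetimedet)$. Since $(f_n)_n$ is uniformly bounded in $L^2(\nu)$, any weak-$L^2$ cluster point $\tilde f$ satisfies $\int\varphi\tilde f\dint\nu=\lim_n\int\varphi\dint A^{\UF_n}\densityN{n}=\int\varphi f\dint\nu$ for every $\varphi\in C(\spacetimedet)$, so by density $\tilde f=f$ and the whole sequence $f_n\xrightharpoonup{L^2} f$.

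To transfer this to the discretized operators, I would observe that $\forwardBinnedPN{\densityN{n}}{n}$ is the $L^2(\nu)$-orthogonal projection $P_nf_n$ of $f_n$ onto the subspace $V_n$ of functions constant on each cell of $\mathcal{P}_n=\{\tau_n^i\times\Gamma_n^j\times\Gamma_n^k\}$, and $\forwardBinnedLimit{\density}$ is analogously the projection $P_\infty f$ of $f$ onto the limit subspace $V_\infty$ (which averages only in the factor left unrefined in cases \ref{enm:TauConstGammaConst}--\ref{enm:TauConstGammaNonconst}, and equals all of $L^2(\nu)$ in case \ref{enm:TauNonconstGammaNonconst}). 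By \cref{AssumptionSetting} the subspaces $V_n$ are nested with $\overline{\bigcup_n V_n}=V_\infty$, so $P_n\varphi\to P_\infty\varphi$ strongly in $L^2$ for every $\varphi\in L^2(\nu)$. Writing
\begin{equation*}
\int\varphi\,\forwardBinnedPN{\densityN{n}}{n}\dint\nu=\int(P_n\varphi)f_n\dint\nu=\int(P_n\varphi-P_\infty\varphi)f_n\dint\nu+\int(P_\infty\varphi)f_n\dint\nu,
\end{equation*}
the first summand vanishes by strong convergence $P_n\varphi\to P_\infty\varphi$ together with the uniform $L^\infty$ bound on $f_n$, and the second tends to $\int(P_\infty\varphi)f\dint\nu=\int\varphi\,\forwardBinnedLimit{\density}\dint\nu$ by weak $L^2$ convergence of $f_n$.

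For the $\limsup$ inequality I would use the concavity bound $\log a-\log b\leq(a-b)/b$ for $a,b>0$ with $a=\forwardBinnedPN{\densityN{n}}{n}$ and $b=\forwardBinnedLimit{\density}$. Multiplying by the nonnegative weight $w\coloneqq\RNderivative{\forward{\density^\dagger}}{\nu}\in L^\infty$ (\cref{thm:scatterlessDetection}\ref{enm:BoundednessForwardOp}) and integrating gives
\begin{equation*}
\int\log\bigl(\forwardBinnedPN{\densityN{n}}{n}\bigr)w\dint\nu-\int\log\bigl(\forwardBinnedLimit{\density}\bigr)w\dint\nu\leq\int\frac{w}{\forwardBinnedLimit{\density}}\bigl(\forwardBinnedPN{\densityN{n}}{n}-\forwardBinnedLimit{\density}\bigr)\dint\nu.
\end{equation*}
The test function $w/\forwardBinnedLimit{\density}$ lies in $L^\infty\subset L^2(\nu)$ by the uniform lower bound on $\forwardBinnedLimit{\density}$, so the right-hand side vanishes by the weak $L^2$ convergence established above. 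The main obstacle is the weak $L^2$ step: partitions need not shrink in every direction in cases \ref{enm:TauConstGammaConst}--\ref{enm:TauConstGammaNonconst}, so \cref{thm:ConvDisretization} cannot be applied directly, and the orthogonal-projection/martingale viewpoint is needed to obtain a unified argument that only uses the nestedness guaranteed by \cref{AssumptionSetting}.
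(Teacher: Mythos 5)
Your proof is correct (for $\UF>0$) and follows the same two-step structure as the paper's: first establish weak $\mathrm{L}^2$ convergence of the discretized densities, then deduce the $\limsup$ inequality from convexity. The interesting difference is in how each step is executed.

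For the weak $\mathrm{L}^2$ step, the paper first obtains $\RNderivative{\forwardPN{\densityN{n}}{n}}{\nu}\xrightharpoonup{\mathrm{L}^2}\RNderivative{\forwardP{\density}}{\nu}$ from \cref{thm:ContinuityForwardOp} and then transfers this to the binned operators via \cref{thm:ConvDisretization} — but that lemma needs $\max_k\mathrm{diam}(C_n^k)\to0$, so the paper proves case \ref{enm:TauNonconstGammaNonconst} and declares \ref{enm:TauConstGammaConst}--\ref{enm:TauConstGammaNonconst} to be ``simple modifications.'' Your conditional-expectation viewpoint, $\forwardBinnedPN{\densityN{n}}{n}=P_nf_n$ with $P_n$ the orthogonal projection onto piecewise constants over $\mathcal P_n$, buys a genuinely unified argument: the nesting from \cref{AssumptionSetting} makes $(V_n)_n$ increasing, Lévy's upward convergence gives $P_n\varphi\to P_\infty\varphi$ strongly for every $\varphi\in\mathrm{L}^2$, and the split $\int\varphi\,P_nf_n=\int(P_n\varphi-P_\infty\varphi)f_n+\int(P_\infty\varphi)f_n$ together with the uniform $\mathrm{L}^\infty$ bound on $f_n$ from \cref{thm:scatterlessDetection}\ref{enm:BoundednessForwardOp} gives all four cases at once. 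This is cleaner than the paper and worth keeping. For the inequality step, the paper invokes weak lower semi-continuity of convex integral functionals; your pointwise linearization $\log a-\log b\leq(a-b)/b$ tested against $w/\forwardBinnedLimit{\density}$ is a more elementary route to the same conclusion.

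The one gap: the lemma allows $\UF\geq 0$, and your $\limsup$ argument breaks for $\UF=0$. Without the scattering part, $\forwardBinnedLimit{\density}$ need not be bounded away from zero (the lower bound in \cref{thm:scatterlessDetection} is stated for $\UF>0$, and the detection density vanishes off the ``visible'' lines), so $w/\forwardBinnedLimit{\density}$ need not lie in $\mathrm{L}^2(\nu)$ and the test-function argument fails. The paper's l.s.c. route does cover $\UF=0$, since the convex functional $v\mapsto-\int\Log{v}\,w\dint\nu$ remains weakly l.s.c.\ as a $(-\infty,\infty]$-valued functional even where the limit integrand is $-\infty$. Either restrict your statement to $\UF>0$ (which is what the $\Gamma$-convergence theorems actually use) or swap the inequality step for the l.s.c.\ argument to match the stated generality.
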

\begin{proof}
We only prove case \ref{enm:TauNonconstGammaNonconst}, the other cases being simple modifications.
By \cref{thm:closednessOfBSet} we have $\density=\d t\otimes\densityT{t}$ with $\densityT{t}(\dom)$ independent of $t$ (up to Lebesgue-nullsets).
The weak-*-weak-* continuity of $\forwardOp^\sct$ and $\forwardOp^\dt$ by \cref{thm:ContinuityForwardOp} implies $\forwardPN{\densityN{n}}{n}\convStar\forwardP{\density}$,
and by density of continuous functions in $\mathrm{L}^2$ we also obtain $\d{\forwardPN{\densityN{n}}{n}}/\d{\nu}\to\d{\forwardP{\density}}/\d{\nu}$ weakly in $\mathrm{L}^2$.
An application of \cref{thm:ConvDisretization} thus proves $\forwardBinnedPN{\densityN{n}}{n}{\to} \forwardBinnedLimit{\density}$ weakly in $\mathrm{L}^2$.

The inequality finally follows from the sequential weak lower semi-continuity of convex integral functionals \cite[Example 1.23]{DalMasoGammaConv},
in particular of the functional
\begin{equation*}
\mathrm{L}^2\ni u\mapsto-\int_0^T\int_{(\partial\domDelta)^2}\Log{u}\RNderivative{\forward{\density^\dagger}}{\nu}\dint\HausdorffMeasSq\dint t.
\qedhere
\end{equation*}
\notinclude{
It remains to prove the inequality.
By restricting to a subsequence (still indexed by $n$) we may replace $\limsup$ with $\lim$.
From Mazur's lemma we get convex combination coefficients $(\alpha_k^n)_{k,n}$ satisfying $\alpha_k^n\ge 0$ and $\sum_{k=n}^{L_n}\alpha_k^n=1$ such that
$
\sum_{k=n}^{L_n}\alpha_k^n \forwardBinnedPN{\densityN{k}}{k}\to\forwardBinnedLimit{\density}
$
strongly in $\mathrm{L}^2$ and (extracting yet another subsequence) pointwise almost everywhere. Hence, using Jensen's inequality,
\begin{align*}
&\lim_{n\rightarrow\infty}\int_0^T\int_{(\partial\domDelta)^2}\Log{\forwardBinnedPN{\densityN{n}}{n}}\RNderivative{\forward{\density^\dagger}}{\nu}\dint\HausdorffMeasSq\dint t\\
=&\lim_{n\rightarrow\infty}\sum_{k=n}^{L_n}\alpha_k^n\left(\int_0^T\int_{(\partial\domDelta)^2}\Log{\forwardBinnedPN{\densityN{k}}{k}}\RNderivative{\forward{\density^\dagger}}{\nu}\dint\HausdorffMeasSq\dint t\right)\\
\le&\limsup_{n\rightarrow\infty} \int_0^T\int_{(\partial\domDelta)^2}\Log{\sum_{k=n}^{L_n}\alpha_k^n\forwardBinnedPN{\densityN{k}}{k}}\RNderivative{\forward{\density^\dagger}}{\nu}\dint\HausdorffMeasSq\dint t\\
=& \int_0^T\int_{(\partial\domDelta)^2}\lim_{n\rightarrow\infty}\Log{\sum_{k=n}^{L_n}\alpha_k^n\forwardBinnedPN{\densityN{k}}{k}}\RNderivative{\forward{\density^\dagger}}{\nu}\dint\HausdorffMeasSq\dint t\\
=& \int_0^T\int_{(\partial\domDelta)^2}\Log{\forwardBinnedLimit{\density}}\RNderivative{\forward{\density^\dagger}}{\nu}\dint\HausdorffMeasSq\dint t
\end{align*}
where in the second last equality we could apply the dominated convergence theorem due to $\norm{\densityN{k}}\simeq\norm{\density}>0$ and thus $\forwardBinnedPN{\densityN{k}}{k}\simeq\norm{\density}>0$ by \cref{thm:scatterlessDetection}.
}
\end{proof}

In case the regularization strength vanishes in the limit (see \cref{thm:GammaConv}) we cannot fully make use of the time regularity implied by the regularizer $\BBEnergy$ since $\min_\momentum\BBEnergy(\densityN{n}, \momentum)$ may diverge. Nevertheless, the regularizer still induces Hölder continuity in time along the sequence which can be exploited to improve the results for cases \ref{enm:TauNonconstGammaConst} and \ref{enm:TauNonconstGammaNonconst} (see also \cref{rmk:ImprovedConvergenceConditions}).
Indeed, the following result is provided in the proof of \cite[Lemma\,4.4]{PET_Base}.
\begin{lemma}[Hölder continuity in time]\label{lemma:TimeLipschitzForwardOperator}
	Let $\density=\d t\otimes\densityT{t}\in\measp(\spacetime)$ and $\momentum\in\meas([0,T]\times\R^3)^3$ such that $\BBEnergy(\density,\momentum)<\infty$. Then for $s,t\in[0,T]$ and $\Gamma,\Gamma'\subset\partial\domDelta$ measurable and path connected,
	\begin{align*}
		\tfrac{1}{\hd^2(\Gamma)\hd^2(\Gamma')}\absNorm{\forwardOpD\densityT{t}(\Gamma\times\Gamma')-\forwardOpD\density_s(\Gamma\times\Gamma')}\lesssim\absNorm{t-s}^{\frac12}\norm{\density}^{\frac12}\BBEnergy(\density,\momentum)^{\frac12}.
	\end{align*}
\end{lemma}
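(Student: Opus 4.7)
The plan is to rewrite $\forwardOpD\densityT{t}(\Gamma\times\Gamma')$ as the pairing of $\densityT{t}$ with a suitable Lipschitz test function, then transfer the temporal difference onto the Benamou--Brenier energy via the Hölder-$\tfrac12$ continuity in the Wasserstein-2 metric that $\BBEnergy<\infty$ implies.

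First, by \cref{thm:scatterlessDetection}\ref{enm:ForwardDensitySplitting}, expanding the X-ray transform $P$ and the convolution $G\ast\densityT{t}$ and using Fubini gives
\begin{equation*}
\forwardOpD\densityT{t}(\Gamma\times\Gamma')=\int_\dom\psi_{\Gamma,\Gamma'}(y)\dint\densityT{t}(y)
\quad\text{for}\quad
\psi_{\Gamma,\Gamma'}(y)=\int_{\Gamma\times\Gamma'}\!g(a,b)\!\int_\R\!G(s(a,b)+r\theta(a,b)-y)\dint r\dint(\hd^2\otimes\hd^2)(a,b).
\end{equation*}
Since $G$ is Lipschitz and supported in $B_{\delta/2}(0)$ and $g$ is bounded, for each $(a,b)$ the inner $r$-integral is a difference quotient of line integrals of $G$, which is easily bounded by $\mathrm{Lip}(G)\cdot\mathrm{length}$ where the relevant $r$-range has length at most the diameter of $B_{\delta/2}(0)$, giving the uniform Lipschitz bound $\mathrm{Lip}(\psi_{\Gamma,\Gamma'})\lesssim\hd^2(\Gamma)\hd^2(\Gamma')$ with implicit constant depending only on $G$, $g$, and $\delta$.

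Next, I would invoke the standard consequence of finite Benamou--Brenier energy (combining \cref{rem:propertiesBenamouBrenier}\ref{enm:massConservation} and (c) together with the mass conservation inherent in $\meas_c$ from \cref{thm:closednessOfBSet}) that the time slices are Hölder-$\tfrac12$ in the Wasserstein-2 metric, namely
\begin{equation*}
\mathbb{W}_2^2(\densityT{s},\densityT{t})\leq|t-s|\,\BBEnergy(\density,\momentum),
\end{equation*}
obtained by testing the minimization definition of $\mathbb{W}_2^2$ with $(\density,\momentum)$ restricted to $[s,t]$. Combining this with the elementary inequality $\mathbb{W}_1(\densityT{s},\densityT{t})\leq\sqrt{\norm{\densityT{t}}}\,\mathbb{W}_2(\densityT{s},\densityT{t})$ (Jensen, using the equal-mass property $\norm{\densityT{t}}=\norm\density/T$) and the Kantorovich--Rubinstein duality applied to $\psi_{\Gamma,\Gamma'}$, one obtains
\begin{equation*}
\bigl|\forwardOpD\densityT{t}(\Gamma\times\Gamma')-\forwardOpD\densityT{s}(\Gamma\times\Gamma')\bigr|
=\Bigl|\int\psi_{\Gamma,\Gamma'}\dint(\densityT{t}-\densityT{s})\Bigr|
\leq\mathrm{Lip}(\psi_{\Gamma,\Gamma'})\,\mathbb{W}_1(\densityT{s},\densityT{t})
\lesssim\hd^2(\Gamma)\hd^2(\Gamma')\sqrt{|t-s|\,\norm\density\,\BBEnergy(\density,\momentum)},
\end{equation*}
and dividing by $\hd^2(\Gamma)\hd^2(\Gamma')$ yields the claim.

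The main technical step is the uniform Lipschitz estimate on $\psi_{\Gamma,\Gamma'}$; the apparent singularity of $\theta(a,b)$ and $s(a,b)$ near the diagonal $a=b$ is harmless because $G$ is supported away from the detector boundary, so only a bounded $r$-interval contributes for each $(a,b)$. Note that the path-connectedness of $\Gamma,\Gamma'$ does not explicitly enter this argument but is carried over from the cited \cite[Lemma 4.4]{PET_Base}, where it is presumably used in related geometric estimates.
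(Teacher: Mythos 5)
Your proof is correct and self-contained. The three key ingredients all check out: (1) the representation $\forwardOpD\densityT{t}(\Gamma\times\Gamma')=\int_\dom\psi_{\Gamma,\Gamma'}\dint\densityT{t}$ with $\mathrm{Lip}(\psi_{\Gamma,\Gamma'})\lesssim\hd^2(\Gamma)\hd^2(\Gamma')$ is obtained from \cref{thm:scatterlessDetection}\ref{enm:ForwardDensitySplitting} by Fubini, and the Lipschitz bound on the inner $r$-integral is sound because the support of $G$ limits, uniformly in $(a,b)$ and $y$, the relevant $r$-range to length $O(\delta)$; (2) the bound $\mathbb W_2^2(\densityT{s},\densityT{t})\leq|t-s|\,\BBEnergy(\density,\momentum)$ follows by reparametrizing $[s,t]$ to $[0,1]$ in the Benamou--Brenier minimization; (3) the conclusion via $\mathbb W_1\leq\sqrt{\norm{\densityT{t}}}\,\mathbb W_2$ (Cauchy--Schwarz on the coupling) and Kantorovich--Rubinstein gives exactly the stated rate after inserting $\norm{\densityT{t}}=\norm{\density}/T$, which is valid since $\BBEnergy<\infty$ forces $\density\in\meas_c$ by \cref{rem:propertiesBenamouBrenier}\ref{enm:massConservation}.

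The paper itself does not prove this lemma but delegates it to \cite[Lemma 4.4]{PET_Base}, so there is no in-paper proof to compare against. The other natural route, consistent with tools used elsewhere in the paper, would be to test $\densityT{t}-\densityT{s}$ against $C^1$ functions directly via the continuity equation and H\"older's inequality, obtaining H\"older-$\tfrac12$ continuity of $t\mapsto\densityT{t}$ in $C^1(\dom)^*$. Your route through $\mathbb W_2$ and Kantorovich--Rubinstein is cleaner here, since $\psi_{\Gamma,\Gamma'}$ is Lipschitz but not obviously $C^1$, so the $C^1(\dom)^*$ approach would require an extra approximation step. You are also right to note that your argument never uses path-connectedness of $\Gamma,\Gamma'$; that hypothesis is presumably carried along from how this lemma is invoked elsewhere (via the mean value theorem on detector pairs), not because it is essential for this particular estimate.
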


\begin{theorem}[$\Gamma$-convergence for vanishing regularization]\label{thm:GammaConv}
The $\Gamma$-convergence of \cref{thm:GammaConv2} even holds for $\beta=0$
if in cases \ref{enm:TauNonconstGammaConst} and \ref{enm:TauNonconstGammaNonconst} we additionally assume $1/\beta_{n+1}\in o(q_n)$ or $N_n\in o(q_n)$.
\end{theorem}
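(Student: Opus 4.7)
The plan is to adapt the proof of \cref{thm:GammaConv2} to the vanishing regularization weight $\beta=0$. The central new difficulty is that for a sequence $\densityN{n}\weakstarto\density$ with $\liminf_n\mathcal{E}_n^{\bm E_{q_n}}(\densityN{n})<\infty$ we no longer obtain a uniform bound on $\BBEnergy(\densityN{n},\momentumN{n})$ but only $\BBEnergy(\densityN{n},\momentumN{n})\lesssim 1/\beta_n$, which may diverge. Consequently the uniform convergence $\forwardBinnedPN{\densityN{n}}{n}\to\forwardBinnedLimit{\density}$ that powered \cref{thm:GammaConv2} is lost, and I will instead rely on the weak $\mathrm{L}^2$-convergence from \cref{thm:Mazur}.

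For the $\Gamma$-liminf I first pass to a subsequence realizing the liminf. Since $\beta_n>0$, finiteness of the functional forces $\min_\momentum\BBEnergy(\densityN{n},\momentum)<\infty$, hence $\densityN{n}\in\meas_c$ by \cref{rem:propertiesBenamouBrenier}\ref{enm:massConservation}. The case $\density=0$ is excluded because then $\norm{\densityN{n}}\to0$ would drive $\forwardBinnedPN{\densityN{n}}{n}\to0$ via \cref{thm:scatterlessDetection} and the data term to $+\infty$. So $\density\in\meas_c$ with $\norm{\densityN{n}}\to\norm\density>0$ by \cref{thm:closednessOfBSet}, and the norm and regularization terms are controlled by weak-$*$ lower semi-continuity and positivity. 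The data term is split as
\begin{equation*}
\tfrac{1}{q_n}\int\log(\forwardBinnedPN{\densityN{n}}{n})\dint\bm E_{q_n}
=\int\log(\forwardBinnedPN{\densityN{n}}{n})\dint\forward{\density^\dagger}
+\int\log(\forwardBinnedPN{\densityN{n}}{n})\dint\bigl(\tfrac{1}{q_n}\bm E_{q_n}-\forward{\density^\dagger}\bigr),
\end{equation*}
where the first (``continuum'') summand is bounded above by $\int\log(\forwardBinnedLimit{\density})\dint\forward{\density^\dagger}$ via \cref{thm:Mazur}. Using that $\forwardBinnedPN{\densityN{n}}{n}$ equals some $c_{ijk}^n\in[\tfrac1C\norm{\densityN{n}},C\norm{\densityN{n}}]$ on each block $B_{ijk}^n=\tau_n^i\times\Gamma_n^j\times\Gamma_n^k$ (\cref{thm:scatterlessDetection}), the second (``noise'') summand is bounded by $C\sum_{i,j,k}|\tfrac{1}{q_n}\bm E_{q_n}(B_{ijk}^n)-\forward{\density^\dagger}(B_{ijk}^n)|$, which I plan to drive to zero almost surely via \cref{thm:stochConv}.

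The main obstacle is that the block count $K_n=N_nM_n^2$ need not satisfy $K_n/q_n\to0$ in cases \ref{enm:TauNonconstGammaConst}--\ref{enm:TauNonconstGammaNonconst}. I will circumvent this by replacing the sum above by its value on a coarser partition and exploiting continuity of $\forwardBinnedPN{\densityN{n}}{n}$ in the block index: Hölder-$\tfrac12$ in time (from \cref{lemma:TimeLipschitzForwardOperator} with $\BBEnergy\lesssim 1/\beta_n$) and Lipschitz in space with uniformly bounded constant (from smoothness of $G$ and $g$ in \cref{thm:scatterlessDetection}\ref{enm:ForwardDensitySplitting} together with the mass bound). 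In case \ref{enm:TauConstGammaNonconst}, coarsening space to $\tilde M_n\to\infty$ with $\tilde M_n^2\in o(q_n)$ introduces a spatial error $\lesssim 1/\tilde M_n\to0$ and makes \cref{thm:stochConv} applicable without any condition beyond $q_n\to\infty$. In case \ref{enm:TauNonconstGammaConst}, the alternative $N_n\in o(q_n)$ reduces the matter to a direct application of \cref{thm:stochConv}, while under $1/\beta_{n+1}\in o(q_n)$ I coarsen time to $\tilde N_n$ with $1/\beta_n\ll\tilde N_n\ll q_n$, producing a Hölder error $\lesssim 1/\sqrt{\tilde N_n\beta_n}\to0$. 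Case \ref{enm:TauNonconstGammaNonconst} combines both coarsenings; the three resulting constraints (temporal error $\to0$, spatial error $\to0$, PPP rate $\tilde N_n\tilde M_n^2/q_n\to0$) can be simultaneously balanced under either alternative hypothesis.

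For the $\Gamma$-limsup I will invoke \cref{thm:RecoverySequence}\ref{enm:RecoverySequenceB} with a sequence $\delta_n$ such that $\beta_n/\delta_n\to0$ yet $1/\delta_n\in o(q_n)$; the choice $\delta_n=\sqrt{\beta_n}$ works since $1/\beta_{n+1}\in o(q_n)$ implies $1/\sqrt{\beta_n}\in o(q_n)$. The approximants $\densityN{n}\weakstarto\density$ then satisfy $\beta_n\min_\momentum\BBEnergy(\densityN{n},\momentum)\le\beta_n/\delta_n\to0$; the pointwise Wasserstein-$2$ convergence $\mathbb W_2(\densityTN{t_n}{n},\densityT{t})\to0$ combined with \cref{rmk:SupConvergenceLipschitzFunctions} yields pointwise a.e.\ convergence of $\forwardBinnedPN{\densityN{n}}{n}$ to $\forwardBinnedLimit{\density}$; dominated convergence with the two-sided bound of \cref{thm:scatterlessDetection} delivers convergence of the continuum part of the data term, and the noise part is handled by the same coarsening argument as for the liminf, now with $\delta_n$ in the role of $\beta_n$.
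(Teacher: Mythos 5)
Your proposal is correct and follows essentially the same route as the paper: excluding $\density=0$ via the lower bound on $\forwardBinnedPN{\cdot}{n}$, splitting the data term into a continuum part (controlled by \cref{thm:Mazur}) and a stochastic noise part, handling the latter by comparing the $\log$-arguments on the fine partition to those on an artificially coarsened partition (whose block count is small enough for \cref{thm:stochConv}) and controlling the residual by the Hölder-in-time / Lipschitz-in-space regularity of the forward density, and building the recovery sequence via \cref{thm:RecoverySequence}\ref{enm:RecoverySequenceB}. The only cosmetic difference is that you first write the crude bound $C\sum_{i,j,k}\lvert\tfrac1{q_n}\bm E_{q_n}(B^n_{ijk})-\forward{\density^\dagger}(B^n_{ijk})\rvert$ before explaining the coarsening fix (the paper keeps the signed increments and telescopes directly), and the spatial coarsening error should read $\lesssim 1/\sqrt{\tilde M_n}$ rather than $1/\tilde M_n$ given the quasiuniformity $M_n\mathrm{diam}(\Gamma_n^k)^2\simeq1$, but neither affects the argument.
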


\begin{remark}[Regularization strength]
The Benamou--Brenier regularization in $\mathcal{E}^{\bm E_{q_n}}_n$ serves to temporally connect the different measured coincidences:
From a single coincidence one cannot localize a mass particle (one only knows the line segment on which it approximately lies),
but if the mass particle did not move too fast, as encoded in the Benamou--Brenier regularization,
then two or more temporally close coincidences from the same particle allow a good estimate of where it is.
However, if the regularization weight decreases faster than the number of coincidences increases,
then the coincidences (which all happen at different time points) no longer allow any localization of a mass particle:
Between two coincidences the particle might have moved arbitrarily far.
Therefore, the condition $1/\beta_{n+1}\in o(q_n)$ is expected and natural.
If the Benamou--Brenier regularization is too weak (it cannot be omitted, though), one can alternatively achieve the temporal connection of coincidences via binning into time intervals $\tau^i$:
Here, too, the time intervals may not shrink faster than the number of coincidences increases, resulting in the alternative condition $N_n\in o(q_n)$
(which in cases \ref{enm:TauConstGammaConst} and \ref{enm:TauConstGammaNonconst} is trivially satisfied).
\end{remark}

\begin{proof}
Since $\norm{\density^\dagger}<\infty$ we have $\norm{\bm E_{q_n}}<\infty$ almost surely. Hence it suffices to restrict to finite measurements in the following.
\paragraph{liminf inequality.}
Let $\densityN{n}\convStar\density$.
First consider the case $\density=0$, thus\ $\mathcal{E}_\infty(\density)=\infty$.
For a contradiction, assume $\liminf_{n\to\infty}\mathcal{E}^{\bm E_{q_n}}_n(\densityN{n})<\infty$.
By passing to a subsequence (still indexed by $n$) we may also assume $\mathcal{E}^{\bm E_{q_n}}_n(\densityN{n})<\infty$ for all $n$.
Due to $\mathcal{E}^{\bm E_{q_n}}_n(\densityN{n})\geq\beta_n\min_\momentum\BBEnergy(\densityN{n},\momentum)\geq\iota_{\mathcal M_c}(\densityN{n})$ by \cref{rem:propertiesBenamouBrenier}\ref{enm:massConservation}
we obtain $\densityN{n}\in\mathcal M_c$ for all $n$.
Now \cref{thm:scatterlessDetection} yields
\begin{equation*}
\mathcal{E}^{\bm E_{q_n}}_n(\densityN{n})
\geq-\frac{1}{q_n}\int\Log{\forwardBinnedPN{\densityN{n}}{n}}\dint \bm E_{q_n}
\geq-\frac{1}{q_n}\int\Log{C\norm{\densityN{n}}}\dint \bm E_{q_n}
=-\Log{C\norm{\densityN{n}}}\frac{\norm{\bm E_{q_n}}}{q_n}.
\end{equation*}
Since $\norm{\densityN{n}}\to0$ as $n\to\infty$ as well as $\norm{\bm E_{q_n}}/q_n\to\norm{\forward{\density^\dagger}}>0$ almost surely by \cref{thm:stochConv},
we obtain the contradiction $\mathcal{E}^{\bm E_{q_n}}_n(\densityN{n})\to\infty$, so the liminf inequality holds.

Now let $\densityN{n}\convStar\density\neq0$.
Assume first $\mathcal{E}_\infty(\density)=\infty$, then necessarily $\density\notin\meas_c$, because $\forwardBinnedLimit{\density}$ is bounded away from zero for $\density\neq0$ by \cref{thm:scatterlessDetection} and thus the integral in $\mathcal{E}_\infty(\density)$ is finite.
Due to the closedness of $\meas_c$ by \cref{thm:closednessOfBSet} this means $\densityN{n}\in\meas_c$ for finitely many $n$ only (otherwise there would exist a subsequence $\densityN{n_j}\in\meas_c$, $\densityN{n_j}\convStar\density$, implying $\density\in\meas_c$).
By \cref{rem:propertiesBenamouBrenier}\ref{enm:massConservation} this implies $\liminf_n\mathcal{E}^{\bm E_{q_n}}_n(\densityN{n})\geq\liminf_n\min_\momentum\BBEnergy(\densityN{n},\momentum)\ge\liminf_n\iota_{\meas_c}(\density_n)=\infty$.

Now consider $\mathcal{E}_\infty(\density)<\infty$. Without loss of generality we may assume $\liminf_n\mathcal{E}^{\bm E_{q_n}}_n(\densityN{n})<\infty$ (else there is nothing to show) and even $\mathcal{E}^{\bm E_{q_n}}_n(\densityN{n})<\infty$ for every $n\in\mathbb{N}$ (else pass to a subsequence). Consider the three summands of $\mathcal{E}^{\bm E_{q_n}}_n(\densityN{n})$. From \cref{thm:ContinuityForwardOp}, the weak-* continuity of the forward operator, we get the convergence of the first summand,
\begin{equation*}
\norm{\forward{\densityN{n}}}\rightarrow\norm{\forward{\density}}.
\end{equation*}
For the last summand, \cref{rem:propertiesBenamouBrenier}\ref{enm:massConservation} implies
\begin{equation*}
\beta_n\min_\momentum\BBEnergy(\densityN{n},\momentum)
\ge\iota_{\meas_c}(\densityN{n})
\end{equation*}
with the right-hand side being lower semi-continuous as $n\to\infty$ by the weak-* closedness of $\meas_c$ due to \cref{thm:closednessOfBSet}. It remains to deal with the middle summand. We expand
\begin{multline}\label{eq:GammaEffectiveQuantities}
-\frac{1}{q_n}\int\Log{\forwardBinnedPN{\densityN{n}}{n}}\dint \bm E_{q_n}=-\int\Log{\forwardBinnedPN{\densityN{n}}{n}}\RNderivative{\forward{\density^\dagger}}{\nu}\dint\HausdorffMeasSq\dint t\\
+\left(\int\Log{\forwardBinnedPN{\densityN{n}}{n}}\RNderivative{\forward{\density^\dagger}}{\nu}\dint\HausdorffMeasSq\dint t -\frac{1}{q_n}\int\Log{\forwardBinnedPN{\densityN{n}}{n}}\dint \bm E_{q_n} \right).
\end{multline}
By \cref{thm:Mazur}, the first term satisfies
\begin{align*}
\liminf_{n\to\infty}-\int\Log{\forwardBinnedPN{\densityN{n}}{n}}\RNderivative{\forward{\density^\dagger}}{\nu}\dint\HausdorffMeasSq\dint t\ge-\int\Log{\forwardBinnedLimit{\density}}\RNderivative{\forward{\density^\dagger}}{\nu}\dint\HausdorffMeasSq\dint t,
\end{align*}
so it remains to show that the term in parentheses vanishes in the limit.
To this end we introduce larger artificial detectors and time intervals. For $n$ large and $\varepsilon\in(0,1)$ we consider the index shift functions $I,J,K\colon\N\to\N\cup\{\infty\}$,
\begin{align*}
K(n)&=\min\left\{ k\in\N \ \middle| \  q_{n-1}/N_k\le(\beta_nq_{n-1})^{\varepsilon} \right\},\\
I(n)&=n\text{ in case \ref{enm:TauConstGammaConst} or \ref{enm:TauConstGammaNonconst} or if }N_n\in o(q_n)
\quad\text{ and }I(n)=\min\{n,K(n)\}\text{ else,}\\
J(n)&=n\text{ in case \ref{enm:TauConstGammaConst} or \ref{enm:TauNonconstGammaConst}}
\quad\text{ and }J(n)=\max\left\{ k\in\N \ \middle| \ k\le n, \ M_k\le(q_n/N_{I(n)})^{1/4} \right\}\text{ else,}
\end{align*}
which satisfy $I(n),J(n)\le n$.
Note that $K(n)$ is finite if $N_n$ diverges.
Furthermore, we always have $q_n/N_{I(n)}\to\infty$ (consequently $J(n)$ is well-defined for $n$ large enough):
Indeed, this is obvious in case \ref{enm:TauConstGammaConst} or \ref{enm:TauConstGammaNonconst} or if $N_n\in o(q_n)$ so that the only remaining case to check is when $1/\beta_{n+1}\in o(q_n)$.
Due to $N_{K(n)}\geq N_{I(n)}$ it suffices to show $q_n/N_{K(n)}\to\infty$.
For a contradiction, assume there is a subsequence, still indexed by $n$, with $q_n/N_{K(n)}$ bounded.
Note that $K(n)$ diverges by definition, hence we can extract yet another subsequence, again still indexed by $n$, such that $K(n+1)-1\geq K(n)$ for all $n$.
Therefore, $q_n/N_{K(n)}\geq q_n/N_{K(n+1)-1}>(\beta_{n+1}q_{n})^{\varepsilon}$ by definition of $K(n+1)$, however, the right-hand side diverges, proving $q_n/N_{I(n)}\to\infty$.
It is also readily checked that for $n$ large enough we always have $M_{J(n)}\le(q_n/N_{I(n)})^{1/4}$.
We next define the sets
\begin{align*}
	S_{n, J(n)}^k\coloneqq\left\{ \Gamma_n^l  \ \middle| \ l\in\{1,\ldots,M_n\},\ \Gamma_n^l\subset\Gamma_{J(n)}^k \right\},
	\quad T_{n, I(n)}^k\coloneqq\left\{ \tau_n^l \ \middle| \ l\in\{1,\ldots,N_n\},\ \tau_n^l\subset\tau_{I(n)}^k \right\}
\end{align*}
that consist of all detectors $\Gamma$ (time intervals $\tau$) at level $n$ that are subsets of a detector $\Gamma^k_{J(n)}$ (time interval $\tau^k_{I(n)}$) at level $J(n)$ (or $I(n)$). We then write
\begin{align}
	&\int\Log{\forwardBinnedPN{\densityN{n}}{n}}\left(\RNderivative{\forward{\density^\dagger}}{\nu}\dint\HausdorffMeasSq\dint t - \dint \frac{\bm E_{q_n}}{q_n}\right)\nonumber\\
	=&\sum_{i=1}^{N_n}\sum_{j,k=1}^{M_n}\Log{\frac{\forwardOpPN{\densityN{n}}(\tau_n^i\times\Gamma_n^j\times\Gamma_n^k)}{\mathcal{L}^1(\tau_n^i)\hd^2(\Gamma_n^j)\hd^2(\Gamma_n^k)}}\left[ \forwardOp\density^\dagger(\tau_n^i\times\Gamma_n^j\times\Gamma_n^k) -\frac{\bm E_{q_n}(\tau_n^i\times\Gamma_n^j\times\Gamma_n^k)}{q_n}\right]\label{eq:Gamma0}\\
	=&\sum_{i=1}^{N_{\!I(n)}}\sum_{j,k=1}^{M_{\!J(n)}}\sum_{\tau\in T^i_{n,I(n)}\!}\sum_{\Gamma\in S^j_{n,J(n)}\!}\sum_{\Gamma'\in S^k_{n,J(n)}\!}
	\!\!\left[\! \log\!\left(\!\frac{\forwardOpPN{\densityN{n}}(\tau\times\Gamma\times\Gamma')}{\mathcal{L}^1(\tau)\hd^2(\Gamma)\hd^2(\Gamma')}\!\right)
	\!-\! \log\!\left(\!\frac{\forwardOpPN{\densityN{n}}(\tau_{\!\!I(n)\!}^i\times\Gamma_{\!\!J(n)\!}^j\times\Gamma_{\!\!J(n)\!}^k)}{\mathcal{L}^1(\tau^i_{\!\!I(n)\!})\hd^2(\Gamma_{\!\!J(n)\!}^j)\hd^2(\Gamma_{\!\!J(n)\!}^k)}\!\right) \!\right]\nonumber\\
	&\qquad\qquad\qquad\qquad\qquad\qquad\qquad\qquad\qquad\cdot\left[\forwardOp\density^\dagger(\tau\times\Gamma\times\Gamma')- \tfrac1{q_n}{\bm E_{q_n}(\tau\times\Gamma\times\Gamma')} \right]\nonumber\\
	+&\sum_{i=1}^{N_{I(n)}}\sum_{j,k=1}^{M_{J(n)}} \Log{\frac{\forwardOpPN{\densityN{n}}(\tau_{I(n)}^i\times\Gamma_{J(n)}^j\times\Gamma_{J(n)}^k)}{\mathcal{L}^1( \tau^i_{I(n)})\hd^2(\Gamma_{J(n)}^j)\hd^2(\Gamma_{J(n)}^k)}}\nonumber\\
	&\qquad\qquad\qquad\qquad\qquad\cdot\left[\forwardOp\density^\dagger(\tau_{I(n)}^i\times\Gamma_{J(n)}^j\times\Gamma_{J(n)}^k)- \tfrac1{q_n}{\bm E_{q_n}(\tau_{I(n)}^i\times\Gamma_{J(n)}^j\times\Gamma_{J(n)}^k)} \right].\label{eq:Gamma2}
\end{align}
We first consider the last triple sum. By \cref{thm:scatterlessDetection} the logarithms in all summands are bounded uniformly in $n$, and
\begin{equation}\label{eq:GammaConvEffectiveIntervals1}
\sum_{i=1}^{N_{I(n)}}\sum_{j,k=1}^{M_{J(n)}}\absNorm{A\density^\dagger(\tau^i_{I(n)}\times\Gamma^j_{J(n)}\times\Gamma^k_{J(n)})-\frac{1}{q_n}\bm E_{q_n}(\tau^i_{I(n)}\times\Gamma^j_{J(n)}\times\Gamma^k_{J(n)})}
\end{equation} 
converges to zero by \cref{thm:stochConv} (with $r_n=1$) because $N_{I(n)}M^2_{J(n)}/q_n\le(N_{I(n)}/q_n)^{1/2}\to 0$.
It remains to show that the first multi sum in \eqref{eq:Gamma2} converges to zero. To this end, we prove that
\begin{equation}\label{eqn:logDifference}
	\Log{\frac{\forwardOpPN{\densityN{n}}(\tau\times\Gamma\times\Gamma')}{\mathcal{L}^1(\tau)\hd^2(\Gamma)\hd^2(\Gamma')}} - \Log{\frac{\forwardOpPN{\densityN{n}}(\tau^i_{I(n)}\times\Gamma_{J(n)}^j\times\Gamma_{J(n)}^k)}{\mathcal{L}^1(\tau^i_{I(n)})\hd^2(\Gamma_{J(n)}^j)\hd^2(\Gamma_{J(n)}^k)}}
\end{equation}
converges to zero uniformly in spacetime such that the multi sum goes to zero by (almost sure) boundedness of the measures $\bm E_{q_n}/q_n$ and $\forwardOp\density^\dagger$. We get (the scatter contributions cancel each other)
\begin{multline}
	\absNorm{\frac{\forwardOpPN{\densityN{n}}(\tau\times\Gamma\times\Gamma')}{\mathcal{L}^1(\tau)\hd^2(\Gamma)\hd^2(\Gamma')} - \frac{\forwardOpPN{\densityN{n}}(\tau^i_{I(n)}\times\Gamma_{J(n)}^j\times\Gamma_{J(n)}^k)}{\mathcal{L}^1(\tau^i_{I(n)})\hd^2(\Gamma_{J(n)}^j)\hd^2(\Gamma_{J(n)}^k)}}\\
	\le\absNorm{\frac{\forwardOpD{\densityN{n}}(\tau\times\Gamma\times\Gamma')}{\mathcal{L}^1(\tau)\hd^2(\Gamma)\hd^2(\Gamma')} - \frac{\forwardOpD{\densityN{n}}(\tau^i_{I(n)}\times\Gamma\times\Gamma')}{\mathcal{L}^1(\tau^i_{I(n)})\hd^2(\Gamma)\hd^2(\Gamma')}}\\
	+\absNorm{\frac{\forwardOpD{\densityN{n}}(\tau^i_{I(n)}\times\Gamma\times\Gamma')}{\mathcal{L}^1(\tau^i_{I(n)})\hd^2(\Gamma)\hd^2(\Gamma')} - \frac{\forwardOpD{\densityN{n}}(\tau^i_{I(n)}\times\Gamma_{J(n)}^j\times\Gamma_{J(n)}^k)}{\mathcal{L}^1(\tau^i_{I(n)})\hd^2(\Gamma_{J(n)}^j)\hd^2(\Gamma_{J(n)}^k)}}.\label{eq:Gamma3}
\end{multline}
Note that the first summand is zero whenever $I(n)=n$ due to $\tau=\tau^i_{I(n)}$ in that case, so assume $I(n)<n$.
Using the mean value theorem in time (which is allowed due to the temporal continuity by \cref{lemma:TimeLipschitzForwardOperator}),
\cref{lemma:TimeLipschitzForwardOperator} and \cref{AssumptionSetting}\ref{enm:timePartition} yield
\begin{multline}\label{eq:GammaConvForBetaPositiveTMP}
	\absNorm{\frac{\forwardOpD{\densityN{n}}(\tau\times\Gamma\times\Gamma')}{\mathcal{L}^1(\tau)\hd^2(\Gamma)\hd^2(\Gamma')} - \frac{\forwardOpD{\densityN{n}}(\tau_{I(n)}^i\times\Gamma\times\Gamma')}{\mathcal{L}^1(\tau^i_{I(n)})\hd^2(\Gamma)\hd^2(\Gamma')}}
	=\absNorm{\frac{\forwardOpD{\density_{n, t}}(\Gamma\times\Gamma')}{\hd^2(\Gamma)\hd^2(\Gamma')} - \frac{\forwardOpD{\density_{n,\hat t}}(\Gamma\times\Gamma')}{\hd^2(\Gamma)\hd^2(\Gamma')}}\\
	\lesssim\sqrt{\absNormS{t-\hat t}\min_\momentum S(\densityN{n},\momentum)}
	\lesssim\frac{1}{\sqrt{N_{I(n)}\beta_n}}
	\leq\frac{1}{\sqrt{(\beta_{n}q_{n-1})^{1-\varepsilon}}}
\end{multline}
for some $t\in\tau\subset\tau^i_{I(n)}$, $\hat t\in\tau^i_{I(n)}$, using the definition of $I(n)$ in the last inequality.
Thus, as $n\to\infty$, \eqref{eq:GammaConvForBetaPositiveTMP} converges to zero uniformly in all time intervals and detectors.
The second term in \eqref{eq:Gamma3} vanishes for $J(n)=n$, so assume $J(n)<n$.
We apply \cref{thm:scatterlessDetection}\ref{enm:ForwardDensitySplitting} and then again the mean value theorem,
this time on the detector pairs (which is admissible since any detector pair is path connected by \cref{AssumptionSetting}), to obtain
\begin{multline}\label{eqn:spatialEstimate}
	\absNorm{\frac{\forwardOpD{\densityN{n}}(\tau^i_{I(n)}\times\Gamma\times\Gamma')}{\mathcal{L}^1(\tau^i_{I(n)})\hd^2(\Gamma)\hd^2(\Gamma')} - \frac{\forwardOpD{\densityN{n}}(\tau^i_{I(n)}\times\Gamma_{J(n)}^j\times\Gamma_{J(n)}^k)}{\mathcal{L}^1(\tau^i_{I(n)})\hd^2(\Gamma_{J(n)}^j)\hd^2(\Gamma_{J(n)}^k)}}\\
	\leq\frac{1}{\mathcal{L}^1(\tau^i_{I(n)})}\int_{\tau^i_{I(n)}}\big|g(a_t, b_t)P[G*\density_{n,t}](\theta(a_t, b_t), s(a_t, b_t))\\
		-g(\hat a_t, \hat b_t)P[G*\density_{n,t}](\theta(\hat a_t, \hat b_t), s(\hat a_t, \hat b_t))\big|\dint t
\end{multline}
for some $(a_t, b_t)\in\Gamma\times\Gamma'\subset\Gamma_{J(n)}^j\times\Gamma_{J(n)}^k$ and $(\hat a_t, \hat b_t)\in\Gamma_{J(n)}^j\times\Gamma_{J(n)}^k$. Now, $(a,b)\mapsto g(a,b)P[G*\density_{n,t}](\theta(a,b),s(a,b))$ is Lipschitz
by smootheness of $g$, Lipschitz continuity of $G$, smoothness of $\theta$ and $s$ (on the set of interest $a-b$ is bounded away from zero), as well as boundedness of $\norm{\density_{n,t}}$ for a.e.\ $t$.
Thus, up to a constant factor, \eqref{eqn:spatialEstimate} is bounded by $\max\{\mathrm{diam}(\Gamma_{J(n)}^j),\mathrm{diam}(\Gamma_{J(n)}^k)\}\lesssim1/\sqrt{M_{J(n)}}$ (using \cref{AssumptionSetting}),
which converges to zero by definition of $J(n)$ (recall that we are in the setting $J(n)\neq n$ and that $\frac{q_n}{N_{I(n)}}\to\infty$).
Overall, \eqref{eq:Gamma3} uniformly converges to zero, and so does \eqref{eqn:logDifference}, as desired, since the arguments of the logarithms are uniformly bounded and uniformly bounded away from zero by \cref{thm:scatterlessDetection}.
Therefore, the $\liminf$-inequality also holds for $\density\neq0$.
\paragraph{limsup inequality.}
Let $\density\neq0$ (otherwise $\mathcal{E}_\infty(\density)=\infty$ and the limsup inequality is trivially fulfilled). We invoke \cref{thm:RecoverySequence}\ref{enm:RecoverySequenceB} and \cref{rmk:SupConvergenceLipschitzFunctions} to get measures $(\densityN{n},\momentumN{n})$ such that $\densityN{n}\convStar\density$ and $\BBEnergy(\densityN{n},\momentum_n)\lesssim\beta_n^{\delta-1}$ for any $\delta>0$ close to zero
and such that
\begin{align*}
\sup_x\absNorm{\int_D G(x-y)\dint\densityTN{t_n}{n}(y)-\int_D G(x-y)\dint\densityT{t}(y)}\convN 0.
\end{align*}
This directly gives us $\beta_n\min_\eta\BBEnergy(\densityN{n},\eta)\lesssim\beta_n^\delta\to0$ as well as $\norm{\forward{\densityN{n}}}\rightarrow\norm{\forward{\density}}$.
We can also deduce $\forwardBinnedPN{\densityN{n}}{n}(t,x,y)\to\forwardBinnedLimit{\density}(t,x,y)$ for almost every $(t,x,y)\in[0,T]\times\boundDomDeltaSq$:
For the scattering part this follows immediately from $\densityN{n}\convStar\density$. For the detection part we show the argument for case \ref{enm:TauNonconstGammaNonconst} and comment afterwards on the simple necessary modifications in the other cases.
Consider sequences $i_n,j_n,k_n$ with $(t,x,y)\in(\tau_n^{i_n}\times\DetectorPair[n]{j_n}{k_n})$, then the detection part is given by
\begin{equation}
\frac{\forwardD{\densityN{n}}(\tau_n^{i_n}\times\DetectorPair[n]{j_n}{k_n})}{\mathcal{L}^1(\tau^i_n)\hd^2(\Gamma_n^{j_n})\hd^2(\Gamma_n^{k_n})}
\le\frac{\absNorm{\forwardD{\densityTN{t_n}{n}}(\DetectorPair[n]{j_n}{k_n})-\forwardD{\densityT{t}}(\DetectorPair[n]{j_n}{k_n})}}{\hd^2(\Gamma_n^{j_n})\hd^2(\Gamma_n^{k_n})}+\frac{\forwardD{\densityT{t}}(\DetectorPair[n]{j_n}{k_n})}{\hd^2(\Gamma_n^{j_n})\hd^2(\Gamma_n^{k_n})}\label{eqn:LimSup2}
\end{equation}
for a sequence $t_n\to t$ with $\absNorm{t_n-t}\lesssim\frac{1}{N_n}$ obtained from the mean value theorem (exploiting the temporal continuity from \cref{lemma:TimeLipschitzForwardOperator}).
Denoting by $\omega$ the modulus of continuity of $\RNderivative{\forwardD{\densityT{t}}}{\hdhd}$ at $(x,y)$,
the last term deviates from $\RNderivative{\forwardD{\densityT{t}}}{\hdhd}(x,y)$
by at most $$\int_{\DetectorPair[n]{j_n}{k_n}}\omega(\absNorm{(a,b)-(x,y)})\dint(a,b)/(\hd^2(\Gamma_n^{j_n})\hd^2(\Gamma_n^{k_n}))\leq\omega(\max\{\mathrm{diam}(\Gamma_n^{j_n}),\mathrm{diam}(\Gamma_n^{k_n})\}),$$
which converges to zero as $n\to\infty$
(recall that by \cref{thm:scatterlessDetection}\ref{enm:ForwardDensitySplitting}
$\RNderivative{\forwardD{\densityT{t}}}{\hdhd}$ is continuous).
For the first term we use Lipschitz continuity of $G$ and \cref{thm:scatterlessDetection}\ref{enm:ForwardDensitySplitting} to obtain $\RNderivative{\forwardD{\densityT{t}}}{\hdhd}(x,y)=P[G*\densityT{t}](\theta(x,y), s(x,y))g(x,y)$ and get
\begin{align*}
&\tfrac{1}{\hd^2(\Gamma_n^{j_n})\hd^2(\Gamma_n^{k_n})}\absNorm{\forwardD{\densityTN{t_n}{n}}(\DetectorPair[n]{j_n}{k_n})-\forwardD{\densityT{t}}(\DetectorPair[n]{j_n}{k_n})}\\
\le&\tfrac{1}{\hd^2(\Gamma_n^{j_n})\hd^2(\Gamma_n^{k_n})}\int_{\DetectorPair[n]{j_n}{k_n}}\absNorm{\forwardD{\densityTN{t_n}{n}}-\forwardD{\densityT{t}}}\dint\hdhd\\
=&\tfrac{1}{\hd^2(\Gamma_n^{j_n})\hd^2(\Gamma_n^{k_n})}\int_{\DetectorPair[n]{j_n}{k_n}}\!\!\absNorm{g(x,y)}\!\Bigg\lvert\int_\R\int_\dom \!\!G(s(x,y)\!+\!l\theta(x,y)\!-\!z)\dint(\densityTN{t_n}{n}\!\!\!-\!\densityT{t})(z)\dint l
\Bigg\rvert\dint\hd^2\!\otimes\!\hd^2\!(x,y)\\
\lesssim&\sup_a\absNorm{\int_\dom G(a-z)\dint\densityTN{t_n}{n}(z)-\int_\dom G(a-z)\dint\densityT{t}(z)}\convN 0.
\end{align*}
As for the other settings,
in \ref{enm:TauConstGammaConst}, the indices $i_n,j_n,k_n$ are independent of $n$ and convergence of the left-hand side in \eqref{eqn:LimSup2} directly follows from the weak-* convergence of $\densityN{n}$;
in setting \ref{enm:TauNonconstGammaConst}, the indices $j_n=j,k_n=k$ are independent of $n$ so that the last term of \eqref{eqn:LimSup2} converges to $\frac{\forwardD{\densityT{t}}(\DetectorPair{j}{k})}{\hd^2(\Gamma^{j})\hd^2(\Gamma^{k})}$;
in setting \ref{enm:TauConstGammaNonconst}, finally, the indices $i_n$ are independent of $n$ and we simply replace
$\densityT{t}$ by $\int_{\tau^i}\densityT{t}\dint t/\mathcal{L}^1(\tau^i)$ and
$\densityTN{t_n}{n}$ by $\int_{\tau^i}\densityTN{t}{n}\dint t/\mathcal{L}^1(\tau^i)$ in the above argument.
Hence, overall we obtained $\forwardBinnedPN{\densityN{n}}{n}(t,x,y)\to\forwardBinnedLimit{\density}(t,x,y)$.
Now applying Fatou's lemma to \eqref{eq:GammaEffectiveQuantities} and the fact that the term in parentheses vanishes in the limit (as proven in the liminf inequality), we obtain
\begin{equation*}
\limsup_n-\frac{1}{q_n}\int\Log{\forwardBinnedPN{\densityN{n}}{n}}\dint \bm E_{q_n}
=\limsup_n-\int\Log{\forwardBinnedPN{\densityN{n}}{n}}\dint\forward{\density^\dagger}
\leq-\int\Log{\forwardBinnedLimit{\density}}\dint\forward{\density^\dagger}.
\end{equation*}
Altogether it follows
$\limsup_n\mathcal{E}^{\bm E_{q_n}}_n(\density_{n})\le\mathcal{E}_\infty(\density)$
as desired.
%
\end{proof}

\begin{remark}(Improved convergence conditions)\label{rmk:ImprovedConvergenceConditions}
We could have estimated \eqref{eq:Gamma0} by
\begin{multline*}
\absNorm{\int\Log{\forwardBinnedPN{\densityN{n}}{n}}\left(\RNderivative{\forward{\density^\dagger}}{\nu}\dint\HausdorffMeasSq\dint t - \dint \frac{\bm E_{q_n}}{q_n}\right)}\\
\le\norm{\Log{\forwardBinnedPN{\densityN{n}}{n}}}_\infty\sum_{i=1}^{N_n}\sum_{j,k=1}^{M_n}\absNorm{\frac{1}{q_n}\bm E_{q_n}(\tau_n^i\times\DetectorPair[n]{j}{k})-\forward{\density^\dagger}(\tau_n^i\times\DetectorPair[n]{j}{k})},
\end{multline*}
which would converge to zero by \cref{thm:stochConv} if $N_n M_n^2\in o(q_n)$ (recall that by \cref{thm:scatterlessDetection}\ref{enm:BoundednessForwardOp} the factor $\norm{\Log{\forwardBinnedPN{\densityN{n}}{n}}}_\infty$ is uniformly bounded in $n$ for $\densityN{n}\weakstarto\density\neq0$).
However, we could get rid of any condition on the detector number $M_n$
by exploiting the spatial regularity induced by the forward operator,
which allowed us to pass to artificially enlarged detectors $\Gamma_{J(n)}^k$.
Similarly, we could trade in conditions on the time interval number $N_n$ for conditions on the regularization strength $\beta_n$, which induce temporal regularity.

Said differently, the forward operator is smoothing in space and hence turns weak convergence into strong convergence along the spatial dimensions.
At the same time the measurement converges weakly in space, resulting in a ``weak times strong'' structure
so that no conditions on the (spatial) detectors are necessary for $\Gamma$-convergence.
However, the forward operator is not smoothing in time, resulting in an insufficient ``weak times weak'' structure in time
(since the Kullback--Leibler divergence is not jointly convex, it is not lower semi-continuous w.r.t.\ simultaneous weak convergence of the measurement and the reconstruction).
Therefore, additional smoothing in time becomes necessary,
either on the reconstruction $\density$ (via the Benamou--Brenier regularization)
or on the measurement $\bm E_{q_n}$ (by binning multiple coincidences into time intervals),
which results in the alternative conditions on regularization strength $\beta_n$ or time interval number $N_n$.
\end{remark}

\begin{lemma}[Equicoercivity of energies]\label{thm:EquiCoercivity}
Let $q_n\to\infty$ and $\UF_n\to\UF>0$ as $n\to\infty$ for positive sequences $q_n,\UF_n,\beta_n$,
and let $\bm E_{q_n}$ be a PPP with intensity measure $q_n\forward{\density^\dagger}>0$.
Then almost surely the sequence of functionals $\mathcal{E}^{\bm E_{q_n}}_n$ is equicoercive,
i.e.\ $\mathcal{E}^{\bm E_{q_n}}_n(\density)\geq\frac{\norm{\density}}\kappa-\kappa$ for some $\kappa>0$ and $n\ge\bm n$, where $\bm n$ is a suitable almost surely integer valued stopping time.
\end{lemma}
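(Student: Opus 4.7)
The plan is to combine three ingredients: the linear mass identity $\norm{\forwardOp\density}=(1-p^\att)\norm{\density}$, the pointwise upper bound on the discrete forward operator from \cref{thm:scatterlessDetection}, and the almost-sure boundedness of $\norm{\bm E_{q_n}}/q_n$ inherited from the concentration results of \cref{SectionPPP}. First I would dispose of the degenerate cases. Since $\beta_n>0$, any $\density\notin\meas_c$ satisfies $\min_\momentum\BBEnergy(\density,\momentum)=+\infty$ by \cref{rem:propertiesBenamouBrenier}\ref{enm:massConservation}, so $\mathcal{E}^{\bm E_{q_n}}_n(\density)=+\infty$ and the desired bound is trivial. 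The case $\density=0$ is likewise trivial: the data term $-\frac{1}{q_n}\int\log(\forwardBinnedPN{0}{n})\dint\bm E_{q_n}$ is either $0$ (if $\bm E_{q_n}=0$) or $+\infty$, so $\mathcal{E}^{\bm E_{q_n}}_n(0)\ge0\ge-\kappa$ for any $\kappa\ge0$.

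For the remaining $\density\in\meas_c$ with $\norm{\density}>0$, the auxiliary estimate of \cref{thm:scatterlessDetection} applied at the parameter $\UF=\UF_n$ yields a constant $C>0$ that can be chosen independent of $n$ (because $\UF_n\to\UF>0$ stays in a bounded positive interval, or more crudely via $\forwardOpP{}\le\max(1,\UF_n)(p^\sct\forwardOp^\sct+p^\dt\forwardOp^\dt)$) with $\RNderivative{\forwardBinnedPN{\density}{n}}{\nu}\le C\norm{\density}$. Dropping the nonnegative Benamou--Brenier contribution and using the monotonicity of $\log$ gives
\begin{equation*}
\mathcal{E}^{\bm E_{q_n}}_n(\density)\ge(1-p^\att)\norm{\density}-\frac{\norm{\bm E_{q_n}}}{q_n}\log\bigl(C\norm{\density}\bigr).
\end{equation*}
Provided $\norm{\bm E_{q_n}}/q_n\le M$ for every $n\ge\bm n$, the right-hand side is bounded below by $f(\norm{\density})$ for $f(x)=(1-p^\att)x-M\log(Cx)$; this function is continuous on $(0,\infty)$, has a finite global minimum at $x^\star=M/(1-p^\att)$, and grows linearly at infinity, so choosing $\kappa$ large enough in terms of $M$, $C$, and $p^\att$ yields $f(\norm{\density})\ge\norm{\density}/\kappa-\kappa$, which is the claim.

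To construct $\bm n$ I would apply \cref{thm:stochConv} to the trivial partition $K_n=1$, $C_n^1=[0,T]\times\boundDomDeltaSq$ with $r_n=1$. The condition $\sqrt{K_n/q_n}\in o(1)$ is automatic from $q_n\to\infty$, and the accompanying growth assumption ($\log n/q_n\to0$ in the arbitrary case, or no extra assumption in the coupled case) then delivers the almost-sure convergence $\norm{\bm E_{q_n}}/q_n\to\norm{\forwardOp\density^\dagger}$. Setting $M:=2\norm{\forwardOp\density^\dagger}$ and $\bm n:=\inf\{N\in\mathbb N:\norm{\bm E_{q_m}}/q_m\le M\text{ for all }m\ge N\}$ gives an almost-surely finite random index with the required property. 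I do not expect any conceptual difficulty; the only real bookkeeping, and the mildest obstacle, is verifying that the constant $C$ of \cref{thm:scatterlessDetection} is genuinely independent of $n$ despite the $n$-dependent bin partition and the drifting parameter $\UF_n$. Everything else reduces to the textbook fact that linear growth beats logarithmic decay.
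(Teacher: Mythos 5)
Your proof is correct and follows essentially the same path as the paper: drop the nonnegative Benamou--Brenier term, use the uniform bound $\RNderivative{\forwardBinnedPN{\density}{n}}{\nu}\le C\norm{\density}$ from \cref{thm:scatterlessDetection}, control $\norm{\bm E_{q_n}}/q_n$ almost surely via the Poisson point process convergence results of \cref{SectionPPP}, and conclude from the fact that linear growth in $\norm\density$ dominates the logarithm. The only cosmetic differences are that the paper massages the lower bound with a Young-type inequality in place of your global-minimum argument for $f$, and that it cites \cref{thm:flatConvergence} rather than invoking \cref{thm:stochConv} directly with the trivial one-set partition; both yield the same almost-sure control of $\norm{\bm E_{q_n}}/q_n$.
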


\begin{proof}
It is sufficient to consider $\density$ such that $\min_\momentum \BBEnergy(\density,\momentum)<\infty$ because otherwise there is nothing to show. By $\norm{\forwardOp\density}\geq\norm{\forwardOp^\sct\density}\gtrsim\norm{\density}$ and \cref{thm:scatterlessDetection} there exists some $C>0$ such that
\begin{multline*}
\mathcal{E}^{\bm E_{q_n}}_n(\density)
\geq\frac{\norm{\density}}C-\frac1{q_n}\int\log(C\norm{\density})\dint\bm E_{q_n}
=\tfrac{\norm{\density}}C-\log(C\norm{\density})\norm{\tfrac{\bm E_{q_n}}{q_n}}\\
\geq\norm{\tfrac{\bm E_{q_n}}{q_n}}\left(1-\log\left(2C^2\norm{\tfrac{\bm E_{q_n}}{q_n}}\right)\right)+\tfrac{1}{2C}\norm{\density}\ge -2C^2\norm{\tfrac{\bm E_{q_n}}{q_n}}^2+\tfrac{1}{2C}\norm{\density}.
\end{multline*}
Due to \cref{thm:flatConvergence} we have $\norm{\bm E_{q_n}/{q_n}}\to\norm{\forwardOp\density^\dagger}$ almost surely
so that for $\kappa$ large enough and $n$ large enough (the latter depending on the realization) the right-hand side is bounded below by $\frac{\norm{\density}}\kappa-\kappa$, as desired. The lower bound holds as soon as $\normS{\bm E_{q_n}/q_n}<\sqrt{\kappa}/\sqrt{2}C$, i.e.\ for $n\ge\bm n=\sup\{n\in\N \ | \  \normS{\bm E_{q_n}/q_n}\ge\sqrt{\kappa}/\sqrt{2}C \}$. Since
\begin{align*}
\{\bm n=n\}=\left\{ \normS{\bm E_{q_n}/q_n}\ge\sqrt{\kappa}/\sqrt{2}C \right\}\cap\bigcap_{m=n+1}^\infty\left\{ \normS{\bm E_{q_m}/q_m}<\sqrt{\kappa}/\sqrt{2}C \right\}
\end{align*}
is measurable, $\bm n$ is a stopping time. By convergence of $\normS{\bm E_{q_m}/q_m}$ it is almost surely integer valued.
\end{proof}

\begin{corollary}[Convergence of minimizers]\label{thm:ConvMinimizers}
In the setting of \cref{thm:GammaConv2} or \cref{thm:GammaConv}, let $\densityN{n}$ be a sequence of minimizers of $\mathcal{E}^{\bm E_{q_n}(\omega)}_n$.
For almost every $\omega\in\Omega$ (i.e.\ almost surely) it has a weakly-* converging subsequence, and any weak-* limit point $\density$ minimizes $\mathcal{E}_\infty$ with
\begin{align*}
\mathcal{E}_\infty(\density)
=\min\mathcal{E}_\infty
=\lim_{n\to\infty}\mathcal{E}^{\bm E_{q_n}(\omega)}_n(\densityN{n})
=\lim_{n\to\infty}\min\mathcal{E}^{\bm E_{q_n}(\omega)}_n.
\end{align*}
\end{corollary}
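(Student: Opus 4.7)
This is the classical $\Gamma$-convergence/equicoercivity consequence for convergence of minimizers, so my plan is to run the standard recipe: fix $\omega$ in the full-measure intersection of the events on which \cref{thm:GammaConv2}/\cref{thm:GammaConv} and \cref{thm:EquiCoercivity} hold, establish a uniform norm bound via a comparison with a fixed competitor, extract a weak-* subsequence by Banach--Alaoglu, and identify its limit via the $\liminf$/$\limsup$ inequalities.

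For the uniform norm bound I would exhibit a competitor $\tilde\density$ with $\mathcal{E}_\infty(\tilde\density)<\infty$. A universal choice (valid in both \cref{thm:GammaConv2} and \cref{thm:GammaConv}) is a time-constant $\tilde\density=\d t\otimes\mu$ for some nonzero $\mu\in\measp(\dom)$: then $\tilde\density\in\meas_c$, the zero momentum gives $\BBEnergy(\tilde\density,0)=0$, and by \cref{thm:scatterlessDetection} the density $\forwardBinnedLimit{\tilde\density}$ is pinched between positive constants, making all summands of $\mathcal{E}_\infty(\tilde\density)$ finite. The $\limsup$ inequality produces a recovery sequence $\tilde\density_n\weakstarto\tilde\density$ with $\mathcal{E}^{\bm E_{q_n}(\omega)}_n(\tilde\density_n)\to\mathcal{E}_\infty(\tilde\density)$, so minimality yields the uniform upper bound $\mathcal{E}^{\bm E_{q_n}(\omega)}_n(\densityN{n})\le\mathcal{E}^{\bm E_{q_n}(\omega)}_n(\tilde\density_n)$. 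The equicoercivity estimate $\mathcal{E}^{\bm E_{q_n}(\omega)}_n(\densityN{n})\ge\norm{\densityN{n}}/\kappa-\kappa$ then forces $\sup_n\norm{\densityN{n}}<\infty$, and Banach--Alaoglu delivers a subsequence $\densityN{n_k}\weakstarto\density$.

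To identify $\density$ as a minimizer, I would use the $\liminf$ inequality $\mathcal{E}_\infty(\density)\le\liminf_k\mathcal{E}^{\bm E_{q_{n_k}}(\omega)}_{n_k}(\densityN{n_k})$ and, for an arbitrary competitor $\tilde\density$, the bound $\limsup_k\mathcal{E}^{\bm E_{q_{n_k}}(\omega)}_{n_k}(\densityN{n_k})\le\mathcal{E}_\infty(\tilde\density)$ derived exactly as above via a recovery sequence and minimality. Chaining these gives $\mathcal{E}_\infty(\density)\le\mathcal{E}_\infty(\tilde\density)$ for every $\tilde\density$, so $\density$ minimizes $\mathcal{E}_\infty$, and specializing $\tilde\density=\density$ collapses the chain to $\lim_k\mathcal{E}^{\bm E_{q_{n_k}}(\omega)}_{n_k}(\densityN{n_k})=\mathcal{E}_\infty(\density)=\min\mathcal{E}_\infty$; the last identity $\min\mathcal{E}_\infty=\lim_n\min\mathcal{E}^{\bm E_{q_n}(\omega)}_n$ then comes for free since $\min\mathcal{E}^{\bm E_{q_n}(\omega)}_n=\mathcal{E}^{\bm E_{q_n}(\omega)}_n(\densityN{n})$ by minimality. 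Applying the same subsequence extraction to any subsequence of $(\densityN{n})_n$ shows every subsequence of the energy sequence has a sub-subsequence tending to $\min\mathcal{E}_\infty$, so by the standard subsequence-subsequence principle the full sequence satisfies $\lim_n\mathcal{E}^{\bm E_{q_n}(\omega)}_n(\densityN{n})=\min\mathcal{E}_\infty$. The only small subtlety in the whole argument is producing a competitor with finite limit energy, but time-constant measures handle this for free, so I do not anticipate a genuine obstacle.
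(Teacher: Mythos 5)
Your proof is correct and is essentially a spelled-out version of the argument the paper invokes by citation: the paper's proof simply refers to the standard $\Gamma$-convergence/equicoercivity result \cite[Cor.\,7.20, Thm.\,7.8]{DalMasoGammaConv} together with \cref{thm:EquiCoercivity}, which is exactly the recipe you execute (finite-energy competitor, equicoercivity for a uniform bound, Banach--Alaoglu, $\liminf$/$\limsup$ identification, subsequence--subsequence). The only addition you make explicit is the choice of a time-constant competitor to guarantee finite limit energy, which is a correct way to instantiate the abstract hypothesis that $\min\mathcal{E}_\infty<\infty$.
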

\begin{proof}
This standard implication follows from \cite[Cor.\,7.20, Thm.\,7.8]{DalMasoGammaConv} and \cref{thm:EquiCoercivity}.
\end{proof}

In case \ref{enm:TauNonconstGammaNonconst} any source of noise (time and space discretization, Poisson measurement noise) vanishes in the limit. Hence, for $\beta_n\to0$, we expect the sequence of minimizers of $\mathcal{E}^{\bm E_{q_n}(\omega)}_n$ to converge to the ground truth material distribution $\density^\dagger$ that generates the (Poisson) measurements. This is indeed the case: $\density^\dagger$ is the unique minimizer of $\mathcal E_\infty$ as we will show in the remainder of the section.

\begin{lemma}[Injectivity of forward operator]\label{thm:InjectivityForwardOp}
The map 
\begin{equation*}\textstyle
\measp(\dom)\ni\lambda\mapsto\RNderivative{\forwardD{\lambda}}{\HausdorffMeasSq}\in L^1(\boundDomDeltaSq)
\end{equation*} is injective.
\end{lemma}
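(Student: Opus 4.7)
The plan is to reduce the injectivity statement to the classical injectivity of the X-ray transform on compactly supported functions, followed by a Fourier-analytic deconvolution step to remove the positron-range kernel $G$. Concretely, suppose $\lambda_1,\lambda_2\in\measp(\dom)$ produce the same density $\RNderivative{\forwardD{\lambda_i}}{\HausdorffMeasSq}\in L^1(\boundDomDeltaSq)$. By \cref{thm:scatterlessDetection}\ref{enm:ForwardDensitySplitting} this density equals $g(a,b)\,P[G*\lambda_i](\theta(a,b),s(a,b))$, and since $g$ is smooth and strictly positive on $\boundDomDeltaSq$, we immediately obtain $P[G*\lambda_1](\theta(a,b),s(a,b))=P[G*\lambda_2](\theta(a,b),s(a,b))$ for $\HausdorffMeasSq$-a.e.\ $(a,b)\in\boundDomDeltaSq$.

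Next I would argue that the parametrisation $(a,b)\mapsto(\theta(a,b),s(a,b))$ covers the set $\mathcal C$ of all lines intersecting $\domDeltaHalf$ up to a negligible set. Because $\domDelta$ is convex with smooth boundary and $\mathrm{dist}(\dom,\partial\domDelta)\geq\delta$, every line hitting $\domDeltaHalf$ crosses $\partial\domDelta$ transversally at exactly two points, and the map from pairs of such crossing points to $\mathcal C$ is (locally) a smooth diffeomorphism away from the tangential configurations (which form a measure-zero subset). The convolutions $G*\lambda_i$ are smooth, supported in $\domDeltaHalf$, so $P[G*\lambda_i]$ is continuous on $\mathcal C$ and the a.e.\ equality promotes to equality everywhere on $\mathcal C$. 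Extending $G*\lambda_i$ by zero to $\R^3$, we obtain $P[G*\lambda_1]=P[G*\lambda_2]$ as functions on all of $S^2\times\R^3$ (lines missing $\domDeltaHalf$ contribute zero on both sides). By the classical injectivity of the X-ray transform on compactly supported integrable functions this forces $G*\lambda_1=G*\lambda_2$ pointwise on $\R^3$.

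It remains to deconvolve, i.e.\ to conclude $\lambda_1=\lambda_2$ from $G*(\lambda_1-\lambda_2)=0$. For this I would pass to Fourier transforms. Since both $\lambda_1$ and $\lambda_2$ are compactly supported Radon measures, the Paley--Wiener theorem tells us that $\hat\lambda_1,\hat\lambda_2$ extend to entire functions on $\C^3$. The convolution identity becomes $\hat G(\xi)(\hat\lambda_1(\xi)-\hat\lambda_2(\xi))=0$ for all $\xi\in\R^3$. Because $G$ is a probability density we have $\hat G(0)=\int G\dint\mathcal L^3=1\neq0$, and by continuity $\hat G$ is nonzero on an open neighbourhood $U$ of the origin. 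Hence $\hat\lambda_1=\hat\lambda_2$ on $U$, and by analytic continuation the two entire functions coincide on all of $\C^3$. Fourier inversion then gives $\lambda_1=\lambda_2$ as measures.

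The main obstacle I anticipate is the geometric bookkeeping for the parametrisation step: one must verify that $(a,b)\mapsto(\theta(a,b),s(a,b))$ is indeed onto the relevant set of lines and that this surjectivity survives taking null sets into account. Once this is in place the analytic parts (X-ray injectivity and Paley--Wiener deconvolution) are standard.
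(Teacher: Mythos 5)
Your proposal is correct and follows essentially the same route as the paper's proof: reduce via \cref{thm:scatterlessDetection}\ref{enm:ForwardDensitySplitting} and the strict positivity of $g$ to injectivity of $\mu\mapsto P[G*\mu]$, invoke injectivity of the X-ray transform on compactly supported integrable functions to reduce further to injectivity of $\mu\mapsto G*\mu$, and deconvolve in Fourier space using Paley--Wiener analyticity.

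The one genuine (if minor) variation is the final deconvolution step. The paper argues that $\mathcal F(G)$, being entire and not identically zero, has a real zero set $S$ of Lebesgue measure zero; then $\mathcal F(\lambda-\mu)$ vanishes on $\R^3\setminus S$, and since it is itself analytic it must vanish identically. You instead exploit that $G$ is a probability density, so $\hat G(0)=\int G\dint\mathcal L^3=1\neq0$, and by continuity $\hat G$ is nonvanishing on an open neighbourhood $U$ of the origin; hence $\hat\lambda_1=\hat\lambda_2$ on $U$, and the identity theorem for (real-)analytic functions on the connected set $\R^3$ gives the conclusion. Your version is slightly more elementary (it avoids the measure-zero-zero-set fact for analytic functions) at the cost of using the specific normalisation of $G$, whereas the paper's works for any nonzero compactly supported kernel. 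You also spell out the surjectivity of the parametrisation $(a,b)\mapsto(\theta(a,b),s(a,b))$ onto lines meeting $\domDeltaHalf$, which the paper leaves implicit in the phrase ``invertibility of the X-ray transform''; this is a useful clarification, since one indeed needs that every line hitting $\operatorname{supp}(G*\lambda_i)\subset\domDeltaHalf$ corresponds to some detector pair.
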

\begin{proof}
	 \Cref{thm:scatterlessDetection}\ref{enm:ForwardDensitySplitting} implies that $\mu\mapsto\RNderivative{\forwardD{\mu}}{\HausdorffMeasSq}$ is injective if $\mu\mapsto P[G*\mu]$ is.
	 Due to the invertibility of the X-ray transform this is the case if and only if $\mu\mapsto G*\mu$ is injective.
	 Hence, let $\lambda,\mu\in\measp(\dom)$ with $G*\lambda=G*\mu$.
	 The convolution theorem for the Fourier transform $\mathcal{F}$ of compactly supported distributions \cite[Thm.\,7.1.15]{Ho90} thus yields
	 \begin{equation}\label{eqn:convThm}
	 \mathcal{F}(G)\mathcal{F}(\lambda)
	 =\mathcal{F}(G*\lambda)
	 =\mathcal{F}(G*\mu)
	 =\mathcal{F}(G)\mathcal{F}(\mu)
	 \end{equation}
	 on $\R^3$.
	 Since $G$ has compact support, $\mathcal{F}(G)$ is analytic by Schwartz's Paley--Wiener theorem \cite[Thm.\,7.23]{Rudin_FA} (even entire if extended to $\C^3$).
	 It is folklore that therefore its zero-level set $S=\{x\in\R^3\,|\,\mathcal{F}(G)(x)=0\}$ has measure zero (otherwise $G$ would be zero), see e.g.\ \cite{Mi20}.
	 Now \eqref{eqn:convThm} implies $\mathcal{F}(\lambda-\mu)=0$ on $\R^3\setminus S$.
	 Since $\lambda$ and $\mu$ also have compact support and therefore $\mathcal{F}(\lambda-\mu)$ is analytic,
	 $\mathcal{F}(\lambda-\mu)$ can only be zero on a nullset unless $\mathcal{F}(\lambda-\mu)=0$.
	 Therefore $\lambda=\mu$.
\notinclude{
	 Applying the Fourier transform $\mathcal{F}$ we get
	\begin{align*}
	\mathcal{F}(G*\mu)(z)=\mathcal{F}(G)(z)\varphi_{\mu}(-z), \ \ z\in\R^3
	\end{align*}
	where $\varphi_\mu$ is the characteristic function $\varphi_\mu(z)=\int_\dom\exp(i\dotProd{z}{y})\dint\mu(y)$ of $\mu$ which uniquely characterizes the measure $\mu$ because $\mu$ is finite \cite[Thm.\,15.8]{KlenkeProbabilityTheory}. If we can show that $\mathcal{F}(G)$ only vanishes on a set of measure zero, then $\varphi_\mu$ (and hence $\mu$) can be determined from $\mathcal{F}(G*\mu)$, i.e.\ from $G*\mu$. To this end we adopt the proof of lemma 7.21 in \cite{Rudin_FA} to show that $\mathcal{F}(G)$ only vanishes on a set of measure zero. First, since $G\in C_c^\infty(\R^3)$, the Fourier transform (interpreted as a complex function)
	\begin{align*}
	\mathbb{C}^3\ni z\mapsto \int_{\R^3}G(x)\exp(-i\langle x, z\rangle)\dint x
	\end{align*}
	is an entire function, i.e.\ a function which is holomorphic on the whole $\mathbb{C}^3$ \cite[Thm.\,7.22]{Rudin_FA}. We now show that if an entire function $f:\mathbb{C}^3\to\mathbb{C}$ vanishes on $A\subset\R^3$ with $\mathcal{L}^3(A)>0$, then we already have $f\equiv 0$. We write
	\begin{align*}
	\mathcal{L}^3(A)=\int_\R\mathcal{L}^2(A_x)\dint x
	\end{align*}
	with $A_x=\{ (y,z)\in\R^2 \ | \ (x,y,z)\in A \}$. Because of $\mathcal{L}^3(A)>0$ the set $S_x=\{ x\in\R \ | \ \mathcal{L}^2(A_x)>0 \}$ has positive $\mathcal{L}^1$-measure. Next, for $x\in S_x$, write
	\begin{align*}
	\mathcal{L}^2(A_x)=\int_\R\mathcal{L}^1(A_{x,y})\dint y>0
	\end{align*}
	where $A_{x,y}=\{ z\in\R \ | \ (y,z)\in A_x \}=\{ z\in\R \ | \ (x,y,z)\in A \}$. Now, the set $S_{x,y}=\{ y\in\R \ | \  \mathcal{L}^1(A_{x,y})>0\}$ has positive $\mathcal{L}^1$-measure for $x\in S_x$. Finally, for $x\in S_x$ and $y\in S_{x,y}$, we get $\mathcal{L}^1(A_{x,y})>0$. We can write
	\begin{align*}
	\mathcal{L}^1(A_{x,y})=\sum_{k\in\mathbb{Z}}\mathcal{L}^1(A_{x,y}\cap[k,k+1))
	\end{align*}
	showing that at least one of the sets $A_{x,y}\cap[k,k+1)$ has positive measure and therefore contains infinitely many points. Because it is bounded it has a cluster point, i.e.\ the set $A_{x,y}$ has a cluster point. By the choice of $A$ we have $f(x,y,z)=0$ for every $z\in A_{x,y}$. Using the identity theorem for holomorphic functions in one variable we get that $f(x,y,\lambda)=0$ for $\lambda\in\mathbb{C}$ which holds for every $x\in S_x$ and every $y\in S_{x,y}$. This procedure can be applied recursively to the other two dimensions: For $x\in S_x$ we have that $S_{x,y}$ contains a cluster point and $f(x,y,\lambda)=0$ for $y\in S_{x,y}$ and $\lambda\in\mathbb{C}$. The identity theorem then yields $f(x,\gamma,\lambda)=0$ for $(\gamma,\lambda)\in\mathbb{C}^2$. Applying this procedure to $S_x$ we finally get that $f\equiv 0$ on $\mathbb{C}$.\\
	Applying this result to $\mathcal{F}(G)$ we see that this function can only vanish on a set of measure zero because $G\neq 0$ and hence $\mathcal{F}(G)\neq 0$.
}
\end{proof}

\begin{corollary}[Convergence to ground truth]
In the setting of \cref{thm:GammaConv}, case \ref{enm:TauNonconstGammaNonconst}, $u=1$,
the minimizers of $\mathcal{E}^{\bm E_{q_n}}_n$ almost surely converge weakly-* to the ground truth $\density^\dagger\in\meas_c$.
\end{corollary}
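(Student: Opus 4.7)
The plan is to invoke \cref{thm:ConvMinimizers}: almost surely, every subsequential weak-* limit of the minimizers minimizes $\mathcal{E}_\infty$. It therefore suffices to show that $\density^\dagger$ is the unique minimizer of $\mathcal{E}_\infty$, since then the standard subsequence-of-subsequence argument forces the whole sequence to converge weakly-* to $\density^\dagger$.

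In case \ref{enm:TauNonconstGammaNonconst} with $u=1$ we have $\forwardBinnedLimit{\density} = \RNderivative{A\density}{\nu}$, so writing $a = \RNderivative{A\density}{\nu}$ and $a^\dagger = \RNderivative{A\density^\dagger}{\nu}$ (both bounded above and uniformly away from zero on $\meas_c \setminus \{0\}$ by \cref{thm:scatterlessDetection}), the limit functional reads $\mathcal{E}_\infty(\density) = \int a\dint\nu - \int a^\dagger \log a\dint\nu + \iota_{\meas_c}(\density)$. A direct rearrangement yields, for $\density \in \meas_c \setminus \{0\}$,
\[
\mathcal{E}_\infty(\density) - \mathcal{E}_\infty(\density^\dagger) = \int a^\dagger\Big(\tfrac{a}{a^\dagger} - 1 - \log\tfrac{a}{a^\dagger}\Big)\dint\nu \geq 0,
\]
with equality iff $a = a^\dagger$ $\nu$-almost everywhere, since $t \mapsto t - 1 - \log t$ is nonnegative on $(0,\infty)$ and vanishes only at $t=1$. (For $\density = 0$ the log term is $+\infty$ and for $\density \notin \meas_c$ the indicator term is $+\infty$, so these are automatically non-minimal.) Hence $\density^\dagger$ minimizes $\mathcal{E}_\infty$, and every other minimizer $\density \in \meas_c$ satisfies $A\density = A\density^\dagger$ as Radon measures on $[0,T]\times\boundDomDeltaSq$.

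The main obstacle is passing from this equality of forward measurements back to $\density = \density^\dagger$. I would first note that $A^\sct$ and $A^\dt$ each preserve total mass (both $G$ and $\vol_{S^2}$ are probability measures and $B_{\mathrm{detectors}}$ is a pushforward), so $\|\density\| = \|A\density\|/(p^\sct+p^\dt) = \|\density^\dagger\|$. Combined with the constant-mass-in-time property inherent in $\meas_c$, this gives $\densityT{t}(\dom) = \densityT{t}^\dagger(\dom)$ for a.e.\ $t \in [0,T]$, which in turn makes $A^\sct\densityT{t}$ and $A^\sct\densityT{t}^\dagger$ coincide at a.e.\ time slice. Subtracting from $A\densityT{t} = A\densityT{t}^\dagger$ then yields $A^\dt\densityT{t} = A^\dt\densityT{t}^\dagger$ for a.e.\ $t$, and \cref{thm:InjectivityForwardOp} forces $\densityT{t} = \densityT{t}^\dagger$, whence $\density = \density^\dagger$. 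This establishes uniqueness and completes the argument.
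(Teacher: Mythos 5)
Your proof is correct and follows essentially the same route as the paper: invoke \cref{thm:ConvMinimizers} and show $\density^\dagger$ is the unique minimizer of $\mathcal{E}_\infty$ by combining the strict positivity of the pointwise Kullback--Leibler integrand with the injectivity result \cref{thm:InjectivityForwardOp}. The paper compresses your uniqueness argument into the single remark that $\mathcal{E}_\infty$ is strictly convex because $A$ is injective; your explicit chain (mass preservation of $A^\sct$ and $A^\dt$ $\Rightarrow$ equal time-slice masses $\Rightarrow$ cancellation of the scatter parts $\Rightarrow$ reduction to the $A^\dt$-injectivity from \cref{thm:InjectivityForwardOp}) spells out precisely why the injectivity of $A^\dt$ on $\measp(\dom)$ suffices to conclude injectivity of the full $A$ on $\meas_c$, a step the paper leaves implicit when it cites \cref{thm:InjectivityForwardOp} as giving injectivity of $A$.
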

\begin{proof}
By \cref{thm:InjectivityForwardOp}, $\forwardOp$ is injective and thus the functional
\begin{equation*}\textstyle
\mathcal{E}_\infty(\density)=\int\RNderivative{\forward{\density}}{\nu}-\RNderivative{\forward{\density^\dagger}}{\nu}\Log{\RNderivative{\forward{\density}}{\nu}}\dint\nu+\iota_{\meas_c}(\density)
\end{equation*}
is strictly convex.
One readily finds its unique minimizer to be $\density^\dagger$,
which by \cref{thm:ConvMinimizers} is almost surely the unique limit of any subsequence of minimizers of $\mathcal{E}^{\bm E_{q_n}}_n$.
\end{proof}



\bibliographystyle{unsrt}
\bibliography{bibliography}

\end{document}